\numberwithin{equation}{section}
\DeclareMathOperator{\Idd}{I}
\DeclareMathOperator{\logg}{ln}
\newcommand{\argmin}{\operatornamewithlimits{argmin}}
\let\oldmarginpar\marginpar
\renewcommand\marginpar[1]{\oldmarginpar{\color{red}\raggedleft\tiny #1}} 
\title{Asymptotic analysis of boundary layer correctors in periodic homogenization}
\begin{document}

\selectlanguage{english}

\newtheorem{theo}{Theorem}[section]
\newtheorem{prop}[theo]{Proposition}
\newtheorem{lem}[theo]{Lemma}
\newtheorem{cor}[theo]{Corollary}
\newtheorem*{theo*}{Theorem}

\theoremstyle{definition}
\newtheorem{defi}[theo]{Definition}

\theoremstyle{remark}
\newtheorem{rem}[theo]{Remark}
\newtheorem*{rems*}{Remarks}

\title{Asymptotic analysis of boundary layer correctors in periodic homogenization}
\author{Christophe Prange$^*$}
\thanks{$^*$ Institut Math\'ematique de Jussieu, $175$ rue du Chevaleret, $75013$ Paris, France\\
\emph{E-mail address:} \texttt{prange@math.jussieu.fr}}

\begin{abstract}
This paper is devoted to the asymptotic analysis of boundary layers in periodic homogenization. We investigate the behaviour of the boundary layer corrector, defined in the half-space $\Omega_{n,a}:=\{y\cdot n-a>0\}$, far away from the boundary and prove the convergence towards a constant vector field, the boundary layer tail. This problem happens to depend strongly on the way the boundary $\partial\Omega_{n,a}$ intersects the underlying microstructure. Our study complements the previous results obtained on the one hand for $n\in\mathbb R\mathbb Q^d$, and on the other hand for $n\notin\mathbb R\mathbb Q^d$ satisfying a small divisors assumption. We tackle the case of arbitrary $n\notin\mathbb R\mathbb Q^d$ using ergodicity of the boundary layer along $\partial\Omega_{n,a}$. Moreover, we get an asymptotic expansion of Poisson's kernel $P=P(y,\tilde{y})$, associated to the elliptic operator $-\nabla\cdot A(y)\nabla\cdot$ and $\Omega_{n,a}$, for $|y-\tilde{y}|\rightarrow\infty$. Finally, we show that, in general, convergence towards the boundary layer tail can be arbitrarily slow, which makes the general case very different from the rational or the small divisors one.
\end{abstract}

\maketitle

\pagestyle{plain}

\selectlanguage{english}

\pagestyle{plain}

\section{Introduction}
In this paper we investigate the behaviour of $v_{bl}=v_{bl}(y)\in\mathbb R^N$, solving the elliptic system
\begin{equation}\label{sysbl}
\left\{\begin{array}{rll}
-\nabla\cdot A(y)\nabla v_{bl}&=0,&y\cdot n-a>0\\
v_{bl}&=v_0(y),& y\cdot n-a=0
\end{array}\right. 
\end{equation}
with periodically oscillating coefficients and Dirichlet data, far away from the boundary of the half-space $\{y\cdot n-a>0\}\subset\mathbb R^d$. Understanding these asymptotics is an important issue in the study of boundary layer correctors in periodic homogenization.

Throughout this paper, $N\geq 1$, $d\geq 2$, $n\in\mathbb S^{d-1}$ is a given unit vector and $a\in\mathbb R$. The Dirichlet data $v_0=v_0(y)\in\mathbb R^N$ is defined for $y\in\mathbb R^d$; so is the family of matrices $A=A^{\alpha\beta}(y)\in M_N\left(\mathbb R\right)$ indexed by $1\leq\alpha,\beta\leq d$. Small greek letters like $\alpha,\ \beta,\ \gamma,\ \eta$ usually denote integers belonging to $\{1,\ldots\ d\}$, whereas $i,\ j,\ k$ stand for integers in $\{1,\ldots\ N\}$. Therefore, taking advantage of Einstein's convention:
\begin{equation*}
\Bigl[\nabla\cdot A(y)\nabla v_{bl}\Bigl]_i=\partial_{y_\alpha}\Bigl(A^{\alpha\beta}_{ij}(y)\partial_{y_\beta}v_{bl,j}\Bigr).
\end{equation*}
The main assumptions on $A$ are now
\begin{description}
\item[(A1) ellipticity] there exists $\lambda>0$ such that for every family $\xi=\xi^\alpha\in\mathbb R^N$ indexed by $1\leq\alpha\leq d$, for all $y\in\mathbb R^d$,
\begin{equation*}
\lambda\xi^\alpha\cdot\xi^\alpha\leq A^{\alpha\beta}(y)\xi^\alpha\cdot\xi^\beta\leq \lambda^{-1}\xi^\alpha\cdot\xi^\alpha;
\end{equation*}
\item[(A2) periodicity] $A$ is $1$-periodic i.e. for all $y\in\mathbb R^d$, for all $\xi\in \mathbb Z^d$,
\begin{equation*}
A(y+\xi)=A(y);
\end{equation*}
\item[(A3) regularity] $A$ is supposed to belong to $C^\infty\left(\mathbb R^d\right)$.
\end{description}
Moreover, we assume
\begin{description}
\item[(B1) periodicity] $v_0$ is a $1$-periodic function;
\item[(B2) regularity] $v_0$ is smooth.
\end{description}

Before focusing on system \eqref{sysbl} itself, let us recall the main steps of the homogenization procedure leading to the study of $v_{bl}$. We carry out our analysis on the elliptic system with Dirichlet boundary conditions
\begin{equation}\label{sysuepsbound}
\left\{
\begin{array}{rll}
-\nabla\cdot A\left(\frac{x}{\varepsilon}\right)\nabla u^\varepsilon&=f,&x\in\Omega\\
u^\varepsilon&=0,& x\in\partial\Omega
\end{array}
\right. 
\end{equation}
posed in a bounded Lipschitz domain $\Omega\subset\mathbb R^d$ endowed with a periodic microstructure. This system may model, for instance, heat conduction ($N=1$), or linear elasticity ($d=2$ or $3$ and $N=d$) in a periodic composite medium. We are interested in the asymptotical behaviour of $u^\varepsilon$, when $\varepsilon\rightarrow 0$. This problem is of particular importance when it comes to the numerical analysis of \eqref{sysuepsbound}. The oscillations at scale $\varepsilon$ are too fast to be captured by classical methods. One therefore looks for an approximation of $u^\varepsilon$, taking into account the small scales, without solving them explicitely. To gain an insight into these numerical issues, we refer to \cite{versa1}; for a general overview of the homogenization theory see \cite{blp} or \cite{cio}.

\subsection{Some error estimates in homogenization}

For a given source term $f\in L^2(\Omega)$, boundedness of $\Omega$ and ellipticity of $A$ yield the existence and uniqueness of a weak solution $u^\varepsilon$ in $H^1_0(\Omega)$ to \eqref{sysuepsbound}. Moreover, we deduce from the a priori bound
\begin{equation*}
\left\|u^\varepsilon\right\|_{H^1_0(\Omega)}\leq C\left\|f\right\|_{L^2(\Omega)},
\end{equation*}
where $C>0$ is a constant independent from $\varepsilon$, that up to the extraction of a subsequence, $u^\varepsilon$ converges weakly in $H^1_0(\Omega)$. A classical method to investigate this convergence is to have recourse to multiscale asymptotic expansions. We expand, at least formally, $u^\varepsilon$ in powers of $\varepsilon$
\begin{equation}\label{multiscale}
u^\varepsilon\approx u^0\left(x,\frac{x}{\varepsilon}\right)+\varepsilon u^1\left(x,\frac{x}{\varepsilon}\right)+\varepsilon^2u^2\left(x,\frac{x}{\varepsilon}\right)+\ldots
\end{equation}
assuming that $u^i=u^i(x,y)\in\mathbb R^N$ is $1$-periodic with respect to the fast variable $y$. Plugging \eqref{multiscale} into \eqref{sysuepsbound},  identifying the powers of $\varepsilon$ and solving the cascade of equations then gives:
\begin{enumerate} 
\item that $u^0$ does not depend on $y$ and that it solves the homogenized system
\begin{equation*}
\left\{
\begin{array}{rll}
-\nabla\cdot A^0\nabla u^0&=f,& x\in\Omega\\
u^0&=0,& x\in\partial\Omega
\end{array}
\right.
\end{equation*}
where the constant homogenized tensor $A^0=A^{0,\alpha\beta}\in M_N\left(\mathbb R\right)$ is given by
\begin{equation*}
A^{0,\alpha\beta}:=\int_{\mathbb T^d}A^{\alpha\beta}(y)dy+\int_{\mathbb T^d}A^{\alpha\gamma}(y)\partial_{y_\gamma}\chi^\beta(y)dy,
\end{equation*}
and $\chi=\chi^\beta(y)\in M_N\left(\mathbb R\right)$ is the family, indexed by $\beta\in\{1,\ldots\ d\}$, of solutions to the cell problem
\begin{equation}\label{syschi}
\left\{\begin{array}{rll}
-\nabla_y\cdot A(y)\nabla_y\chi^\beta&=\partial_{y_\alpha}A^{\alpha\beta}&,\ y\in\mathbb T^d\\
\int_{\mathbb T^d}\chi^\beta(y)dy&=0&
\end{array}\right.;
\end{equation}
\item that for all $x\in\Omega$ and $y\in\mathbb T^d$, $u^1(x,y)=\chi^\alpha(y)\partial_{x_\alpha}u^0(x)+\bar{u}^1(x)$;
\item and that for all $x\in\Omega$ and $y\in\mathbb T^d$, $u^2(x,y):=\Gamma^{\alpha\beta}(y)\partial_{x_\alpha}\partial_{x_\beta}u^0(x)+\chi^\alpha(y)\partial_{x_\alpha}\bar{u}^1(x)+\bar{u}^2(x)$, where $\Gamma=\Gamma^{\alpha\beta}(y)\in M_N\left(\mathbb R\right)$ is the family, indexed by $\alpha,\ \beta\in\{1,\ldots\ d\}$, solving
\begin{equation*}
\left\{\begin{array}{rll}
-\nabla_y \cdot A(y)\nabla_y \Gamma^{\alpha\beta}&=B^{\alpha\beta}-\int_{\mathbb T^d}B^{\alpha\beta}(y)dy,& y\in \mathbb T^d\\
\int_{\mathbb T^d}\Gamma^{\alpha\beta}(y)dy&=0&
\end{array}\right. ,
\end{equation*}
and
\begin{equation*}
B^{\alpha\beta}(y):=A^{\alpha\beta}(y)+A^{\alpha\gamma}(y)\partial_{y_\gamma}\chi^\beta(y)+\partial_{y_\gamma}\bigl(A^{\gamma\alpha}(y)\chi^\beta(y)\bigr).
\end{equation*}
\end{enumerate}

One can then carry out energy estimates on the error $r^{2,\varepsilon}_{bl}:=u^\varepsilon(x)-u^0(x)-\varepsilon\chi\left(\frac{x}{\varepsilon}\right)\nabla u^0(x)-\varepsilon^2\Gamma\left(\frac{x}{\varepsilon}\right)\cdot\nabla^2u^0(x)$ solving the elliptic system
\begin{equation*}
\left\{
\begin{array}{rll}
-\nabla\cdot A\left(\frac{x}{\varepsilon}\right)\nabla r^{2,\varepsilon}&=f^\varepsilon,&x\in\Omega\\
r^{2,\varepsilon}&=g^\varepsilon,& x\in\partial\Omega
\end{array}
\right. .
\end{equation*}
To bound $r^{2,\varepsilon}$ one needs some regularity on $u^0$, say $u^0\in H^4(\Omega)$ (for refined estimates, involving lower regularity on $u^0$, see \cite{moscovog,homovp}). Under this coarse assumption
\begin{equation*}
\left\|f^\varepsilon\right\|_{L^2(\Omega)}\leq C\varepsilon\left\|u^0\right\|_{H^4(\Omega)}
\end{equation*}
and 
\begin{equation*}
\left\|g^\varepsilon\right\|_{H^\frac{1}{2}(\Omega)}=\varepsilon\left\|\chi\left(\frac{x}{\varepsilon}\right)\nabla u^0-\varepsilon\Gamma\left(\frac{x}{\varepsilon}\right)\cdot\nabla^2u^0\right\|_{H^\frac{1}{2}(\Omega)}\leq C\varepsilon^\frac{1}{2}\left\|u^0\right\|_{H^4(\Omega)}.
\end{equation*}
One can therefore show that $\left\|r^{2,\varepsilon}\right\|_{H^1(\Omega)}\leq C\varepsilon^\frac{1}{2}\left\|u^0\right\|_{H^4(\Omega)}$, which implies
\begin{equation}\label{coarseestr2eps}
\left\|u^\varepsilon-u^0-\varepsilon\chi\left(\frac{x}{\varepsilon}\right)\cdot\nabla u^0\right\|_{H^1(\Omega)}\leq\left\|r^{2,\varepsilon}\right\|_{H^1(\Omega)}+\varepsilon^2\left\|\Gamma\left(\frac{x}{\varepsilon}\right)\cdot\nabla^2u^0\right\|_{H^1(\Omega)}\leq C\varepsilon^\frac{1}{2}\left\|u^0\right\|_{H^4(\Omega)}
\end{equation}
and 
\begin{equation*}
\left\|u^\varepsilon-u^0\right\|_{L^2(\Omega)}=O\bigl(\varepsilon^\frac{1}{2}\bigr).
\end{equation*}
The latter estimate shows, that the zeroth-order term $u^0$ is a correct approximation of $u^\varepsilon$. Estimate \eqref{coarseestr2eps} is however limited by the trace on $\partial\Omega$ of $u^1\left(\cdot,\frac{\cdot}{\varepsilon}\right)$. Actually, the periodicity assumption of the ansatz \eqref{multiscale} with respect to the microscopic variable $y$, is not compatible with the homogeneous Dirichlet condition on the boundary. In order to force the Dirichlet condition one introduces a boundary layer term $u^{1,\varepsilon}_{bl}:=u^{1,\varepsilon}_{bl}(x)\in\mathbb R^N$ at order $\varepsilon^1$ in the expansion \eqref{multiscale}. More precisely, $u^{1,\varepsilon}_{bl}$ solves
\begin{equation}\label{sysubl}
\left\{\begin{array}{rll}
-\nabla\cdot A\left(\frac{x}{\varepsilon}\right)\nabla u^{1,\varepsilon}_{bl}&=0,&x\in\Omega\\
u^{1,\varepsilon}_{bl}&=-u^1\left(x,\frac{x}{\varepsilon}\right),& x\in\partial\Omega
\end{array}\right. 
\end{equation}
and the corrected Ansatz is
\begin{equation}\label{multiscalebl}
u^\varepsilon\approx u^0\left(x\right)+\varepsilon\left[u^1\left(x,\frac{x}{\varepsilon}\right)+u^{1,\varepsilon}_{bl}\right]+\varepsilon^2u^2\left(x,\frac{x}{\varepsilon}\right)+\ldots
\end{equation}
Adding the boundary layer at first-order improves \eqref{coarseestr2eps}: if $u^0\in H^4(\Omega)$,
\begin{equation}\label{improvedbound}
\left\|u^\varepsilon-u^0-\varepsilon\chi\left(\frac{x}{\varepsilon}\right)\cdot\nabla u^0-\varepsilon u^{1,\varepsilon}_{bl}\right\|_{H^1(\Omega)}\leq C\varepsilon\left\|u^0\right\|_{H^4(\Omega)}.
\end{equation}
However, such a trick remains useless as long as one is not able to describe the asymptotics of $u^{1,\varepsilon}_{bl}$ when $\varepsilon\rightarrow\infty$.

\subsection{Homogenization of boundary layer systems}
\label{subsechomobl}

As for \eqref{sysuepsbound}, the problem is to show that \eqref{sysubl} can be in some sense homogenized. A few remarks are in order:
\begin{enumerate}
\item System \eqref{sysubl} exhibits oscillations in the coefficients as well as on the boundary.
\item The oscillations along $\partial\Omega$ are not periodic in general. This can be expressed by the fact that the boundary breaks the periodic microstructure.
\item  The a priori bound on $u^{1,\varepsilon}_{bl}$
\begin{equation*}
\left\|u^{1,\varepsilon}_{bl}\right\|_{H^1(\Omega)}\leq C\left\|u^i\left(x,\frac{x}{\varepsilon}\right)\right\|=O\bigl(\varepsilon^{-\frac{1}{2}}\bigr)
\end{equation*}
does not provide a uniform bound in $H^1(\Omega)$.
\end{enumerate}
These issues make the homogenization of the boundary layer system \eqref{sysubl} far more difficult than the homogenization of \eqref{sysuepsbound}. The results obtained in this direction are still partial. The first step towards the asymptotic behaviour of $u^{1,\varepsilon}_{bl}$ is to get a priori bounds uniform in $\varepsilon$. If $N=1$, the maximum principle furnishes a uniform bound in $L^\infty(\Omega)$ on $u^{1,\varepsilon}_{bl}$. A way of investigating \eqref{sysubl} in the case when $N>1$ is to represent $u^{1,\varepsilon}_{bl}$ in terms of the oscillating Poisson kernel associated to $-\nabla\cdot A\left(\frac{x}{\varepsilon}\right)\nabla\cdot$ and $\Omega$. In the series of papers \cite{alin,Alin90P,alinLp}, Avellaneda and Lin manage to get estimates, uniform in $\varepsilon$, on these Green and Poisson kernels, as well as expansions valid in the limit $\varepsilon\rightarrow 0$ (for recent progress in this direction, see \cite{kls12}). One of the results of \cite{alin} (see theorem $3$) is the uniform bound $\bigl\|u^{1,\varepsilon}_{bl}\bigr\|_{L^p(\Omega)}\leq C$, for $1<p\leq\infty$, valid under the assumption that $\partial\Omega$ is at least $C^{1,\alpha}$, with $0<\alpha\leq 1$. Note that an $L^2(\Omega)$ bound on $u^{1,\varepsilon}_{bl}$ yields an $H^1(\omega)$ bound on the gradient, for $\omega\Subset\Omega$ compactly supported in $\Omega$ (see \cite{allam,dgvnm}). The strong oscillations of $\nabla u^{1,\varepsilon}_{bl}$ are filtered out in the interior of the domain and concentrate near the boundary. Hence, the multiscale expansion is right up to the order $1$ in $\varepsilon$ in the interior:
\begin{equation}\label{estinterior}
\left\|u^\varepsilon-u^0-\varepsilon\chi\left(\frac{x}{\varepsilon}\right)\cdot\nabla u^0\right\|_{H^1(\omega)}\leq C\varepsilon\left\|u^0\right\|_{H^4(\Omega)}.
\end{equation}
However, to improve the asymptotics up to the boundary, one needs another approach. Namely, we seek after a $2$-scale approximation of the boundary layer corrector
\begin{equation*}
u^{1,\varepsilon}_{bl}\approx v\left(x,\frac{x}{\varepsilon}\right).
\end{equation*}
Take $x_0\in\partial\Omega$ a point at which there exists a tangent hyperplane directed by $n:=n(x_0)\in\mathbb S^{d-1}$ and assume that $\Omega$ is contained in $\{x\cdot n-x_0\cdot n\geq 0\}$. Then plugging formally $v$ into \eqref{sysubl} gives at order $\varepsilon^{-2}$
\begin{equation}\label{approxblx_0}
\left\{
\begin{array}{rll}
-\nabla_y\cdot A(y)\nabla_yv(x_0,y)&=0,&y\cdot n-\frac{n\cdot x_0}{\varepsilon}> 0\\
v(x_0,y)&=-\chi(y)\cdot\nabla u^0(x_0),&y\cdot n-\frac{n\cdot x_0}{\varepsilon}=0
\end{array}
\right. .
\end{equation}
The variable $x_0$ is nothing more than a parameter in this system. If $\Omega$ is convex, the boundary layer is approximated in the vicinity of each point $x_0$ of the boundary by a $v_{x_0}=v(x_0,\cdot)$ solving \eqref{approxblx_0}. This formal idea has been made rigorous for polygonal convex bounded domains $\Omega\subset\mathbb R^2$, for which only a finite number of correctors $v^k$ has to be considered, one for each edge $K^k$. By linearity, $v^k$ factors into 
\begin{equation*}
v^k(x,y)=v^{k,\alpha}_{bl}(y)\partial_{x_\alpha}u^0(x)
\end{equation*}
where for all $\alpha=1,\ldots\ d$, $v^{k,\alpha}_{bl}=v^{k,\alpha}_{bl}(y)\in\mathbb R^N$ solves
\begin{equation}\label{sysblalpha}
\left\{\begin{array}{rll}
-\nabla\cdot A(y)\nabla v^{k,\alpha}_{bl}&=0,&y\cdot n^k-a>0\\
v^{k,\alpha}_{bl}&=\chi^\alpha(y),& y\cdot n^k-a=0
\end{array}\right. 
\end{equation}
with $a:=\frac{x\cdot n}{\varepsilon}$. Dropping the exponents $k$ and $\alpha$, we end up with \eqref{sysbl}.

System \eqref{sysbl} is linear elliptic in divergence form. The main source of difficulties one encounters is the lack of boundedness of the domain $\Omega_{n,a}:=\{y\cdot n-a>0\}$. This complicates the existence theory, but even more the study of the asymptotical behaviour. Moreover, the analysis of \eqref{sysbl} depends, in a nontrivial manner, on the interaction between $\partial\Omega_{n,a}$ and the underlying lattice. So far, it has been carried out in two different contexts:
\begin{description}
\item[(RAT)] the rational case, i.e. $n\in\mathbb R\mathbb Q^d$;
\item[(DIV)] the small divisors case, when there exists $C,\ \tau>0$ such that for all $\xi\in\mathbb Z^d\setminus\{0\}$, for all $i=1,\ldots\ d-1$,
\begin{equation}\label{smalldivisors1}
\left|n_i\cdot\xi\right|\geq C\left|\xi\right|^{-d-\tau}
\end{equation}
where $(n_1,\ldots\ n_{d-1},n)$ forms an orthogonal basis of $\mathbb R^d$.
\end{description}
Assumption \eqref{smalldivisors1} means that the distance from every point, except $0$, of the lattice $\mathbb Z^d$, to the line $\{\lambda n,\ \lambda\in\mathbb R\}$, is in some sense bounded from below. Note that this condition, albeit generic, in the sense that it is satisfied for almost every $n\in\mathbb S^{d-1}$ (for more quantitative results, see \cite{dgvnm2}), is not fulfilled by every vector $n\notin\mathbb R\mathbb Q^d$. 

Let us explain why the description of the asymptotics of $v_{bl}$ far away from the boundary is a crucial step in the homogenization of \eqref{sysubl}. Roughly speaking, one proves in both contexts {\bf (RAT)} and {\bf (DIV)}, that there exits a smooth $v_{bl}$ solving \eqref{sysbl} and that this solution converges very fast, when $y\cdot n\rightarrow\infty$, towards a constant vector field $v^\infty_{bl}\in\mathbb R^N$, the boundary layer tail. For a polygonal domain $\Omega$ with edges satisfying for instance the small divisors assumption, we approximate $u^{1,\varepsilon}_{bl}$ by $\bar{u}^1$ solution of
\begin{equation}\label{homsyssysubl}
\left\{
\begin{array}{rll}
-\nabla\cdot A^0\nabla \bar{u}^1&=0,&x\in\Omega\\
\bar{u}^1&=-v^{k,\infty}\cdot\nabla u^0,&x\in\partial\Omega\cap K^k
\end{array}
\right. .
\end{equation}
Indeed, 
\begin{equation*}
\left\|u^{1,\varepsilon}_{bl}-\bar{u}^1\right\|_{L^2(\Omega)}\leq \Bigl\|u^{1,\varepsilon}_{bl}-\bar{u}^1-\sum_{k}\left[v^k_{bl}-v^{k,\infty}\right]\cdot\nabla u^0\Bigr\|_{L^2(\Omega)}+\sum_{k}\left\|\left[v^k_{bl}-v^{k,\infty}\right]\cdot\nabla u^0\right\|_{L^2(\Omega)}
\end{equation*}
and using the decay of the boundary layer correctors in the interior of $\Omega$, one proves: 
\begin{theo}[\cite{homovp}, theorem $3.3$]\label{theohomobl}
If $u^0\in H^{2+\omega}(\Omega)$, with $\omega>0$, then there exists $\kappa>0$ such that 
\begin{equation*}
\left\|u^{1,\varepsilon}_{bl}-\bar{u}^1\right\|_{L^2(\Omega)}=O\left(\varepsilon^\kappa\right).
\end{equation*}
\end{theo}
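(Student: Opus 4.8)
The plan is to follow the error decomposition already written just above the statement, and estimate the two terms on the right-hand side separately. Write $w^\varepsilon := u^{1,\varepsilon}_{bl}-\bar u^1-\sum_k [v^k_{bl}-v^{k,\infty}]\cdot\nabla u^0$, where $v^k_{bl}$ is understood as the rescaled boundary layer corrector $v^k_{bl}(x/\varepsilon)$ and $v^{k,\infty}$ is its tail. First I would compute the system solved by $w^\varepsilon$. Since $u^{1,\varepsilon}_{bl}$ solves \eqref{sysubl} and each $v^k_{bl}(\cdot/\varepsilon)$ is $A(\cdot/\varepsilon)$-harmonic near the $k$-th edge (by \eqref{sysblalpha}, with $a=x\cdot n^k/\varepsilon$), while $\bar u^1$ is $A^0$-harmonic, the interior source term of $w^\varepsilon$ collects (i) the homogenization defect between $-\nabla\cdot A(\cdot/\varepsilon)\nabla$ and $-\nabla\cdot A^0\nabla$ applied to $\bar u^1$, which is handled by the standard two-scale/oscillating-test-function machinery and is $O(\varepsilon^\theta)$ in $H^{-1}$ provided $\bar u^1$ has enough regularity — which follows from elliptic regularity for \eqref{homsyssysubl} together with the $H^{2+\omega}$ assumption on $u^0$ and smoothness of the tails; and (ii) cut-off error terms coming from the fact that one must localize each $v^k_{bl}$ near its own edge by a partition of unity, producing commutators $\nabla\zeta_k\cdot A(\cdot/\varepsilon)\nabla(v^k_{bl}(\cdot/\varepsilon))$ and lower-order pieces, each multiplied by $\nabla u^0$ and its derivatives. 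On the boundary, the trace of $w^\varepsilon$ on each edge $K^k$ is, by construction, $-\chi^\alpha(\cdot/\varepsilon)\partial_{x_\alpha}u^0 - (-v^{k,\infty}\cdot\nabla u^0) - (v^k_{bl}(\cdot/\varepsilon)-v^{k,\infty})\cdot\nabla u^0 = -(v^k_{bl}(\cdot/\varepsilon)-\chi(\cdot/\varepsilon))\cdot\nabla u^0$, but $v^k_{bl}=\chi$ exactly on the edge by the Dirichlet condition in \eqref{sysblalpha}, so this trace reduces to the mismatch created where neighbouring cut-offs overlap and near the vertices of the polygon.

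The heart of the estimate for the first term is therefore: (a) the interior homogenization error for $\bar u^1$, standard; (b) the localization/commutator terms, which are supported in $O(\varepsilon)$-neighbourhoods of the edges or vertices and hence, after using the uniform-in-$\varepsilon$ bounds on $\nabla(v^k_{bl}(\cdot/\varepsilon))$ in those strips (available from the Avellaneda--Lin type estimates and the fast decay of $v^k_{bl}-v^{k,\infty}$ in {\bf (RAT)}/{\bf (DIV)}), contribute $O(\varepsilon^{1/2})$ or better in $H^{-1}$; and (c) a near-vertex contribution, where two boundary layers for different normals collide and one simply absorbs the defect into an $O(\varepsilon^\kappa)$ error using the $L^2$-smallness of an $\varepsilon$-neighbourhood of the finitely many vertices. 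Feeding all these through the energy estimate for the elliptic system solved by $w^\varepsilon$ (coercivity from \textbf{(A1)}) gives $\|w^\varepsilon\|_{H^1(\Omega)}\leq C\varepsilon^{\kappa}$ for some $\kappa>0$, hence in particular in $L^2(\Omega)$.

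For the second term, $\sum_k\|[v^k_{bl}-v^{k,\infty}]\cdot\nabla u^0\|_{L^2(\Omega)}$, the point is that in both {\bf (RAT)} and {\bf (DIV)} one has decay $|v^k_{bl}(y)-v^{k,\infty}|\leq C(1+\mathrm{dist}(y,\partial\Omega_{n^k,a}))^{-\gamma}$ for some $\gamma>0$ (exponential in the rational case), so that after rescaling $y=x/\varepsilon$ the function $|v^k_{bl}(x/\varepsilon)-v^{k,\infty}|$ is bounded by $C(1+\mathrm{dist}(x,K^k)/\varepsilon)^{-\gamma}$; integrating this against the bounded weight $|\nabla u^0|^2$ over $\Omega$ and splitting into the $O(\varepsilon)$-strip near $K^k$ (measure $O(\varepsilon)$) and its complement (where the integrand is $O(\varepsilon^{2\gamma})$ times the decay of the tail) yields $O(\varepsilon^{\min(1/2,\gamma)})$ in $L^2$. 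Choosing $\kappa$ as the minimum of the exponents produced in the two steps closes the argument.

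The main obstacle I expect is step (b)--(c): getting genuinely uniform-in-$\varepsilon$ control of $\nabla(v^k_{bl}(\cdot/\varepsilon))$ in the thin boundary strips and, above all, handling the vertices of the polygon, where the two-scale ansatz $u^{1,\varepsilon}_{bl}\approx v(x,x/\varepsilon)$ breaks down because no single half-space model applies; one really needs the interior decay of the boundary layer correctors (cf. \eqref{estinterior} and the references \cite{allam,dgvnm}) to argue that the contribution of a fixed $\varepsilon$-neighbourhood of each vertex is negligible, and to make the constant $C$ and exponent $\kappa$ depend only on the data and not on $\varepsilon$. Tracking the regularity of $\bar u^1$ — which requires elliptic estimates for \eqref{homsyssysubl} on a polygonal domain with only $H^{2+\omega}$ boundary data — is a secondary technical point but is what forces the hypothesis $u^0\in H^{2+\omega}(\Omega)$ rather than merely $u^0\in H^1(\Omega)$.
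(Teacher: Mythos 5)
The paper does not actually prove this statement: theorem \ref{theohomobl} is imported from \cite{homovp}, and all that is recorded here is the triangle-inequality splitting displayed just above it, plus the remark that one concludes using the decay of the boundary layer correctors in the interior. Your proposal adopts exactly that splitting, so at the level of strategy you agree with the cited source; the issue is whether the two estimates you sketch actually close, and there are genuine gaps.

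The step that fails is your item (i). With $w^\varepsilon$ as you define it, the interior source contains $\nabla\cdot\bigl(A(\cdot/\varepsilon)-A^0\bigr)\nabla\bar{u}^1$, and this is \emph{not} $O(\varepsilon^\theta)$ in $H^{-1}(\Omega)$, however smooth $\bar{u}^1$ is: $\bigl(A(\cdot/\varepsilon)-A^0\bigr)\nabla\bar{u}^1$ is merely bounded in $L^2$ and converges weakly to $\bigl(\int_{\mathbb T^d}A\,dy-A^0\bigr)\nabla\bar{u}^1$, which is nonzero in general, so its divergence does not even tend to $0$ in $H^{-1}$-norm, let alone with a rate. This is precisely why, already for $u^\varepsilon$ in the introduction, the rate \eqref{coarseestr2eps} requires inserting the interior corrector $\varepsilon\chi(\cdot/\varepsilon)\cdot\nabla u^0$; your ansatz must likewise contain $\varepsilon\chi(\cdot/\varepsilon)\cdot\nabla\bar{u}^1$ (whose own boundary trace must then be removed), otherwise the energy estimate for $w^\varepsilon$ can only give $O(1)$. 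Once that corrector is added you need roughly $\bar{u}^1\in H^2(\Omega)$, and this is the second gap: elliptic regularity for \eqref{homsyssysubl} does not provide it, because the Dirichlet data $-v^{k,\infty}\cdot\nabla u^0$ jumps at the vertices (the tails $v^{k,\infty}$ differ from edge to edge), so $\bar{u}^1$ has corner singularities and in general lies only in $H^{s}$ with $s<3/2$. Coping with this limited regularity — or restructuring the argument, e.g. comparing $u^{1,\varepsilon}_{bl}$ first with the solution of the \emph{oscillating} system having the non-oscillating data $-v^{k,\infty}\cdot\nabla u^0$, so that the boundary layers absorb only the oscillating part of the trace and the homogenization-with-rate step is done on fixed data — is where the substance of the proof in \cite{homovp} lies; it is not the ``secondary technical point'' of your last paragraph. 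A final bookkeeping remark: with the sign conventions of \eqref{sysblalpha} and \eqref{homsyssysubl}, the trace of your $w^\varepsilon$ on $K^k$ is not $-(v^k_{bl}(\cdot/\varepsilon)-\chi(\cdot/\varepsilon))\cdot\nabla u^0$; exact cancellation on each edge requires the opposite sign in front of the sum (the paper's displayed decomposition is itself loose on this point), so the conventions need to be fixed before the energy estimate is run.
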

This shows that the oscillating Dirichlet data of \eqref{sysubl} can be homogenized. The limit system \eqref{homsyssysubl} involves the tails of the boundary layer correctors. The corrector $\bar{u}^1$ has been used to implement numerical shemes in \cite{versa1}. More recently, the obtention of an \ref{theohomobl}-like theorem for a smooth uniformly convex two-dimensional domain has been achieved in \cite{dgvnm2}. Again, it relies on the approximation of the domain by polygonals with edges satisfying the small divisors condition, emphasizing the key role of systems \eqref{sysbl}.

We devote section \ref{secdgvnm} of this manuscript to a review of the previous results obtained on \eqref{sysbl}, with an emphasis on the techniques used in the small divisors case. Let us give an insight into these theorems:
\begin{description}
\item[(RAT)] Among the rich litterature about this case, we refer to \cite{allam} (lemma $4.4$), \cite{moscovog} (appendix $6$) and the references therein. A precise statement is given in theorem \ref{theoexpdecayrat}. It consists of two parts: 
\begin{description}
\item[existence] there exists a variational solution $v_{bl}\in C^\infty\left(\overline{\Omega_{n,a}}\right)$ to \eqref{sysbl} (unique if appropriate decay of $\nabla v_{bl}$ is prescribed);
\item[convergence] there exists a constant vector, called boundary layer tail, $v_{bl}^{a,\infty}$ depending on $a$ such that $v_{bl}(y)-v_{bl}^{a,\infty}$ and its derivatives tend to $0$ exponentially fast when $y\cdot n\rightarrow\infty$.
\end{description}
\item[(DIV)] This case was treated in the recent paper \cite{dgvnm} by G\'erard-Varet and Masmoudi. A precise statement is given in theorem \ref{theodecaydiv}. It consists again of two parts: 
\begin{description}
\item[existence] there exists a variational solution $v_{bl}\in C^\infty\left(\overline{\Omega_{n,a}}\right)$ to \eqref{sysbl} (unique if appropriate decay of $\nabla v_{bl}$ is prescribed);
\item[convergence] there exists a boundary layer tail $v_{bl}^{\infty}$ independent from $a$ such that $v_{bl}(y)-v_{bl}^{\infty}$ and its derivatives tend to $0$ when $y\cdot n\rightarrow\infty$, faster than any negative power of $y\cdot n$.
\end{description}
\end{description}
The main difference is that the boundary layer tail depends on $a$ in the rational setting and not in the small divisors one, which implies that the homogenization theorem \ref{theohomobl} is true up to the extraction of a subsequence $\varepsilon_n$ in the former. In both cases, one can come down to some periodic framework to prove the existence of a variational solution. Fast convergence follows from a St-Venant estimate. We come back to these points in detail in section \ref{secdgvnm}.

\subsection{Outline of our results and strategy}

Our goal is to analyse \eqref{sysbl} in the case when $n\notin\mathbb R\mathbb Q^d$ does not meet the small divisors assumption \eqref{smalldivisors1}. Again, one first wonders if the system has a solution. This question can be investigated by methods analogous to those of \cite{dgvnm}. Indeed, their well-posedness result does not rely on the small divisors assumption. The latter hypothesis is however essential to show the convergence towards the boundary layer tail in the work of G\'erard-Varet and Masmoudi. We therefore have recourse to another approach based on an integral representation of the variational solution to \eqref{sysbl} by the mean of Poisson's kernel $P=P(y,\tilde{y})\in M_N(\mathbb R)$ associated to $-\nabla\cdot A(y)\nabla\cdot$ and the domain $\Omega_{n,a}$. Basically,
\begin{equation*}
v_{bl}(y)=\int_{\partial\Omega_{n,a}}P(y,\tilde{y})v_0(\tilde{y})d\tilde{y},
\end{equation*}
for every $y\in\Omega_{n,a}$. At first glance, if $n=e_d$ the $d$-th vector of the canonical basis of $\mathbb R^d$, if $a=0$ and $y=\left(0,\varepsilon^{-1}\right)$, $\varepsilon>0$, then 
\begin{multline*}
v_{bl}\left(0,\frac{1}{\varepsilon}\right)=\int_{\mathbb R^{d-1}}P\left(\frac{1}{\varepsilon}(0,1),(\tilde{y}',0)\right)v_0(\tilde{y}',0)d\tilde{y}'\\=\int_{\mathbb R^{d-1}}\frac{1}{\varepsilon^{d-1}}P\left(\frac{1}{\varepsilon}(0,1),\frac{1}{\varepsilon}(\tilde{x}',0)\right)v_0\left(\frac{1}{\varepsilon}(\tilde{x}',0)\right)d\tilde{x}'.
\end{multline*}
Examining the asymptotics of $v_{bl}$ far away from the boundary, requires subsequently to understand the behaviour of the oscillating kernel $\frac{1}{\varepsilon^{d-1}}P\left(\frac{x}{\varepsilon},\frac{\tilde{x}}{\varepsilon}\right)$ when $\varepsilon\rightarrow 0$, or equivalently the asymptotical comportment of $P(y,\tilde{y})$, when $y\cdot n\rightarrow\infty$ and $\tilde{y}\in\partial\Omega_{n,0}$. This is done in section \ref{secexp}, relying on ideas and results of Avellaneda and Lin \cite{alin,alinLp}. We prove an expansion for $P$ associated to the domain $\Omega_{n,0}$ for arbitrary $n\in\mathbb S^{d-1}$. \emph{To put it in a nutshell, one demonstrates that there exists $\kappa>0$ such that for all $y\in\Omega_{n,0}$, for all $\tilde{y}\in\partial\Omega_{n,0}$,}
\begin{equation*}
\left|P(y,\tilde{y})-P_{exp}(y,\tilde{y})\right|\leq \frac{C}{|y-\tilde{y}|^{d-1+\kappa}},
\end{equation*}
\emph{where $P_{exp}=P_{exp}(y,\tilde{y})$ is an explicit kernel, with ergodicity properties tangentially to the boundary. The precise statement of this key result is postponed to section \ref{secexp}: see theorem \ref{theodvptP}}. We have an explicit expression for the corrector terms. Although this expansion is stated (and proved) for the domain $\Omega_{n,0}$ it extends to $\Omega_{n,a}$ by a simple translation. Studying the tail of $v_{bl}$ now boils down to examining the limit when $y\cdot n\rightarrow\infty$ of
\begin{equation*}
\int_{\partial\Omega_{n,a}}\left[P_{exp}(y,\tilde{y})\right]v_0(\tilde{y})d\tilde{y}+\int_{\partial\Omega_{n,a}}R(y,\tilde{y})v_0(\tilde{y})d\tilde{y}
\end{equation*}
with rest $R=R(y,\tilde{y})\in M_N(\mathbb R)$ satisfying $\left|R(y,\tilde{y})\right|\leq\frac{C}{|y-\tilde{y}|^{d-1+\kappa}}$. The rest integral tends to $0$. One takes advantage of the oscillations of $v_0$ on the boundary to show the convergence of the corrector integrals by the mean of an ergodic theorem. Doing so, we demonstrate the following theorem, which is the core of this paper:
\begin{theo}\label{theoCVBLtail}
Assume that $n\notin\mathbb R\mathbb Q^d$. Then, 
\begin{enumerate}
\item there exists a unique solution $v_{bl}\in C^\infty\left(\overline{\Omega_{n,a}}\right)\cap L^\infty\left(\Omega_{n,a}\right)$ of \eqref{sysbl} satisfying 
\begin{subequations}
\begin{align}
&\left\|\nabla v_{bl}\right\|_{L^\infty\left(\left\{y\cdot n-t>0\right\}\right)}\stackrel{t\rightarrow\infty}{\longrightarrow}0,\label{nablavbltends0}\\
&\int_a^\infty\left\|\partial_nv_{bl}\right\|^2_{L^\infty\left(\left\{y\cdot n-t=0\right\}\right)}dt<\infty\label{partial_z_dvbl},
\end{align}
\end{subequations}
\item and a boundary layer tail $v_{bl}^\infty\in\mathbb R^N$, independent from $a$, such that for all $y\in\Omega_{n,a}$
\begin{equation*}
v_{bl}(y)\stackrel{y\cdot n\rightarrow\infty}{\longrightarrow}v_{bl}^\infty,
\end{equation*}
locally uniformly in the tangential variable.
\end{enumerate}
Furthermore, one has an explicit expression for $v^\infty_{bl}$ (see \eqref{CVBLexprBLtail}).
\end{theo}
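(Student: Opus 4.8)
The plan is to handle the two assertions separately: the well-posedness in part (1) by adapting known arguments, and the convergence in part (2) via the Poisson-kernel expansion of Theorem~\ref{theodvptP} combined with an ergodic theorem, which is where the hypothesis $n\notin\mathbb R\mathbb Q^d$ really enters.

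\emph{Well-posedness.} For existence I would follow G\'erard-Varet and Masmoudi \cite{dgvnm}: seek $v_{bl}(y)=V(y,y\cdot n)$ with $V=V(\theta,z)$ that is $1$-periodic in $\theta\in\mathbb T^d$ and defined for $z>a$, so that $\partial_{y_\alpha}[V(y,y\cdot n)]=(\partial_{\theta_\alpha}+n_\alpha\partial_z)V$ evaluated at $(\theta,z)=(y,y\cdot n)$, and \eqref{sysbl} becomes the degenerate elliptic system
\begin{equation*}
-(\nabla_\theta+n\partial_z)\cdot A(\theta)(\nabla_\theta+n\partial_z)V=0 \ \text{ on } \mathbb T^d\times(a,\infty),\qquad V(\cdot,a)=v_0 .
\end{equation*}
This reformulation and the associated variational existence theory, together with the St-Venant energy estimate, do not use the small divisors assumption; they produce a weak solution with $(\nabla_\theta+n\partial_z)V\in L^2(\mathbb T^d\times(a,\infty))$ whose truncated energy decays in $z$, which gives \eqref{partial_z_dvbl}. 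Smoothness of $A$ and $v_0$ and elliptic regularity upgrade $V$, hence $v_{bl}$, to $C^\infty(\overline{\Omega_{n,a}})$, while the uniform bound $v_{bl}\in L^\infty(\Omega_{n,a})$ and the gradient decay \eqref{nablavbltends0} follow from the Avellaneda--Lin pointwise estimates on Green and Poisson kernels \cite{alin,alinLp} (or, when $N=1$, from the maximum principle). For uniqueness, the difference $w$ of two solutions in the prescribed class solves the homogeneous system with zero trace and inherits \eqref{nablavbltends0}, \eqref{partial_z_dvbl}; testing the reformulated equation against $w$ on $\mathbb T^d\times(a,T)$ and letting $T\to\infty$ kills the boundary term at $z=T$ by the gradient decay, so $(\nabla_\theta+n\partial_z)w\equiv0$ and then $w\equiv0$.

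\emph{Convergence to the tail.} Starting from $v_{bl}(y)=\int_{\partial\Omega_{n,a}}P(y,\tilde y)v_0(\tilde y)\,d\tilde y$ and $P=P_{exp}+R$ with $|R(y,\tilde y)|\le C|y-\tilde y|^{-(d-1+\kappa)}$ from Theorem~\ref{theodvptP}, write $t:=y\cdot n-a=\operatorname{dist}(y,\partial\Omega_{n,a})$. The remainder is harmless,
\begin{equation*}
\Bigl|\int_{\partial\Omega_{n,a}}R(y,\tilde y)v_0(\tilde y)\,d\tilde y\Bigr|\le C\|v_0\|_{L^\infty}\int_{\mathbb R^{d-1}}\frac{d\zeta}{(|\zeta|^2+t^2)^{(d-1+\kappa)/2}}=C't^{-\kappa}\stackrel{t\to\infty}{\longrightarrow}0,
\end{equation*}
uniformly in the tangential position of $y$. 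For the main term, the explicit form of $P_{exp}$ in Theorem~\ref{theodvptP} exhibits $\int_{\partial\Omega_{n,a}}P_{exp}(y,\tilde y)v_0(\tilde y)\,d\tilde y$ as, up to lower order, the average of the $\mathbb Z^d$-periodic function $\tilde y\mapsto\omega(\tilde y)v_0(\tilde y)$ (with $\omega$ the $1$-periodic boundary weight supplied by the expansion) against a smooth mass-one tangential profile that spreads at scale $t$. Cutting this profile into dyadic tangential shells and using that, since $n\notin\mathbb R\mathbb Q^d$, the $(d-1)$-parameter translation flow generated by a basis of $n^\perp$ is ergodic on $\mathbb T^d$ (its orbit closure is all of $\mathbb T^d$), the restriction of $\omega v_0$ to $\partial\Omega_{n,a}$ equidistributes, so each shell's contribution converges to the mean of $\omega v_0$ over $\mathbb T^d$; the polynomial decay of the profile makes the shell sum converge, giving the limit as an explicit constant, locally uniformly in the tangential variable. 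This is the announced $v^\infty_{bl}$. Finally, replacing $y$ by $y+an$ turns $(n,a,A(\cdot),v_0(\cdot))$ into $(n,0,A(\cdot+an),v_0(\cdot+an))$; since the $\mathbb T^d$-mean is translation invariant, the tail is unchanged, so $v^\infty_{bl}$ is independent of $a$ --- exactly the point where the general irrational case differs from {\bf (RAT)}, in which the relevant average is only over a proper subtorus and does depend on $a$.

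\emph{Main obstacle.} The crux is the ergodic passage to the limit in the $P_{exp}$-integral: the ergodic theorem controls averages over growing boxes, whereas here one integrates against the non-compactly-supported, only polynomially decaying tangential profile of $P_{exp}$, and convergence must be uniform on tangential compacta. Quantifying the tail of $P_{exp}$ from the Avellaneda--Lin estimates and marrying it with the shell decomposition and a uniform equidistribution estimate is the technical heart; the remainder of the proof is comparatively routine given \cite{dgvnm}, \cite{alin,alinLp} and Theorem~\ref{theodvptP}.
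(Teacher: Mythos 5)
Your convergence argument follows the same architecture as the paper (Poisson representation, the expansion of Theorem \ref{theodvptP}, a $t^{-\kappa}$ bound on the remainder, and an ergodic averaging of the leading term), with one inaccuracy worth flagging: the boundary weight produced by $P_{exp}$ is \emph{not} a $\mathbb Z^d$-periodic function of $\tilde y$, since it contains $\nabla G^{*,1}_{bl}$, which is only quasiperiodic along $\partial\Omega_n$ through its torus lift $V^\gamma(\mathrm N\tilde z',0)$; the paper therefore invokes Shubin's lemma for quasiperiodic functions (lemma \ref{lemergodic}) rather than periodic equidistribution, after freezing the tangential offset of the $G^{*,0}$-profile up to an $O(\varepsilon)$ error to get local uniformity. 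That part of your plan is essentially correct modulo this relabelling.

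There are, however, two genuine gaps. First, uniqueness: the class in the theorem is all of $C^\infty\left(\overline{\Omega_{n,a}}\right)\cap L^\infty$ satisfying \eqref{nablavbltends0}--\eqref{partial_z_dvbl}, with no quasiperiodic structure assumed, so you cannot test "the reformulated equation" for the difference $w$ on $\mathbb T^d\times(a,T)$ -- a general competitor does not lift to the torus. Working directly in the half-space, your energy identity acquires lateral boundary terms on $\{|z'|=R\}$ and the Dirichlet energy of $w$ over such slabs is not finite a priori; the qualitative decay \eqref{nablavbltends0} carries no rate, so these terms are not controlled. The paper instead proves uniqueness by duality (theorem \ref{theov=w}): it tests against the solution $U$ of the adjoint problem with compactly supported data, uses the Avellaneda--Lin bounds \eqref{estALinGdqcq}--\eqref{estALinnablaGdqcq} to get $|U|\lesssim z_d|z|^{-d}$, $|\nabla U|\lesssim|z|^{-d}$, and the growth bound $|u(z)|\leq Cz_d^{1/2}$ coming from \eqref{partial_z_dvbl}, and then checks that \emph{all} boundary integrals on $\partial\left([-R,R]^{d-1}\times[0,R]\right)$ vanish as $R\to\infty$. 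Second, independence from $a$: your one-line translation argument overlooks that the weight in \eqref{CVBLexprBLtail} contains $\partial_{\tilde y_\beta}G^{*,1,\alpha}_{bl}$, i.e. a boundary layer corrector that itself changes when the coefficients are shifted by $an$; "translation invariance of the $\mathbb T^d$-mean" does not apply to it without further argument. One could try to repair this by identifying the shifted corrector's torus lift with $V^\gamma(\cdot+na,\cdot)$ and by identifying $\mathcal M$ with the Haar average via unique ergodicity of the tangential flow (which is where $n\notin\mathbb R\mathbb Q^d$ enters), but none of this is in your proposal. The paper takes a different, fully justified route: it proves that $a\mapsto v^{a,\infty}_{bl}$ is Lipschitz (proposition \ref{propcontinuityava}), using new comparison estimates $\left|G^a-G^{a'}\right|\leq C\nu\left(y\cdot n\right)^\mu\left(\tilde y\cdot n\right)^\mu|y-\tilde y|^{-(d-2+2\mu)}$ for the Green kernels of the shifted operators, and then combines the exact invariance $v^{a-\xi\cdot n,\infty}_{bl}=v^{a,\infty}_{bl}$ for $\xi\in\mathbb Z^d$ with the density of $\{\xi\cdot n\}$ in $\mathbb R$.
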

The use of the ergodic theorem to prove the convergence does not yield any rate. One wonders therefore how fast convergence of $v_{bl}$ towards $v^\infty_{bl}$ is. A partial answer is given by the next theorem, whose proof is addressed in section \ref{secslow}:  
\begin{theo}
\label{theoslow}
Assume that $d=2$, $N=1$, $A=\Idd_2$ and that $n\notin\mathbb R\mathbb Q^2$ does not satisfy \eqref{smalldivisors1}. Then for every $l>0$, for all $R>0$, there exists a smooth function $v_0$ and a sequence $(t_M)_{M}\in]a,\infty[^{\mathbb N}$ such that:
\begin{enumerate}
\item $\left(t_M\right)_M$ is strictly increasing and tends to $\infty$;
\item the unique solution $v_{bl}$ of \eqref{sysbl}, in the variational sense, converges towards $v^\infty_{bl}$ as $y\cdot n\rightarrow\infty$ and for all $M\in\mathbb N$, for all $y'\in\partial\Omega_{n,0}\cap B(0,R)$,
\end{enumerate}
\begin{equation*}
\left|v_{bl}(y'+t_Mn)-v^\infty_{bl}\right|\geq t_M^{-l}.
\end{equation*} 
\end{theo}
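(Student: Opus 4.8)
The plan is to exploit the simplest possible setting ($d=2$, $A=\Idd_2$, $N=1$) where the boundary layer system \eqref{sysbl} reduces to the Laplace equation in a half-plane with periodic-in-the-tangential-trace Dirichlet data, so that $v_{bl}$ has an explicit Poisson-integral (equivalently Fourier-series) representation. After rotating so that $n$ makes angle $\theta$ with $e_2$, a point on $\partial\Omega_{n,0}$ written in the tangential coordinate $s$ has underlying physical coordinates $(s\cos\theta,-s\sin\theta)$ (up to signs), so the trace $v_0$ restricted to the boundary becomes, since $v_0$ is $1$-periodic in each Cartesian variable, a quasi-periodic function of $s$ of the form $s\mapsto\sum_{\xi\in\mathbb Z^2}\hat v_0(\xi)e^{2\pi i(\xi_1\cos\theta-\xi_2\sin\theta)s}$. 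The failure of the small divisors condition \eqref{smalldivisors1} means precisely that there is a subsequence $\xi^{(k)}\in\mathbb Z^2\setminus\{0\}$ along which $|\xi^{(k)}_1\cos\theta-\xi^{(k)}_2\sin\theta|$ decays \emph{faster} than any inverse power of $|\xi^{(k)}|$; write $\omega_k$ for these frequencies, so $\omega_k\to 0$ super-polynomially fast relative to $|\xi^{(k)}|$. The first step is thus to \emph{design} $v_0$: pick a lacunary subsequence of the $\xi^{(k)}$ and set $\hat v_0(\pm\xi^{(k)})=c_k$ with $c_k$ decaying fast enough (say $c_k=2^{-k}$ or faster, adjusted to $|\xi^{(k)}|$) that $v_0\in C^\infty$, while keeping the series genuinely infinite. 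One also normalizes so that the mean over the boundary — which is the $\xi=0$ coefficient and, by the explicit formula \eqref{CVBLexprBLtail} of Theorem \ref{theoCVBLtail}, equals $v_{bl}^\infty$ — is, say, zero; then $v_{bl}^\infty=0$ and we must lower-bound $|v_{bl}(y'+t_Mn)|$ itself.

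The second step is the explicit solution formula: for the half-plane Laplacian, a tangential Fourier mode $e^{2\pi i\omega s}$ on the boundary extends harmonically to $e^{2\pi i\omega s}e^{-2\pi|\omega| z}$ where $z=y\cdot n-a>0$ is the normal distance. Hence
\begin{equation*}
v_{bl}(y'+tn)=\sum_{k}2c_k\cos(2\pi\omega_k s(y'))\,e^{-2\pi|\omega_k| t},
\end{equation*}
with $s(y')$ the tangential coordinate of $y'$. Each mode decays like $e^{-2\pi|\omega_k|t}$, which, since $\omega_k\to 0$, is an \emph{arbitrarily slow} exponential. This is the mechanism: at normal distance $t$, the dominant surviving mode is the one with $|\omega_k|\sim 1/t$, and it has not yet decayed. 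Concretely, given $l>0$, choose $t_M$ so that $2\pi|\omega_M| t_M$ is bounded (say $\le 1$), i.e. $t_M:=1/(2\pi|\omega_M|)$ — this tends to $\infty$ and, passing to a subsequence of the $\omega_k$, can be made strictly increasing. For such $t_M$ the $M$-th term has size $\gtrsim |c_M|$, while the \emph{tail} $\sum_{k>M}$ is controlled by $\sum_{k>M}|c_k|$ (trivially, dropping the exponentials) and the \emph{head} $\sum_{k<M}2c_k\cos(2\pi\omega_k s)e^{-2\pi|\omega_k|t_M}$ is controlled by $\sum_{k<M}|c_k|e^{-2\pi|\omega_k|t_M}$; since the lower frequencies $\omega_k$, $k<M$, are \emph{not} small — there are only finitely many, all bounded below away from $0$ — and $t_M\to\infty$, each such term decays like a fixed-rate exponential in $t_M$, hence faster than $t_M^{-l}$. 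The remaining worry is the cosine factor $\cos(2\pi\omega_M s(y'))$ at the point $y'\in B(0,R)$: since $\omega_M\to 0$ and $|s(y')|\le R$ is bounded, $2\pi\omega_M s(y')\to 0$, so $\cos(2\pi\omega_M s(y'))\to 1$ uniformly over $B(0,R)$; thus for $M$ large this factor is $\ge 1/2$, uniformly in $y'\in\partial\Omega_{n,0}\cap B(0,R)$. Combining: for $M$ large,
\begin{equation*}
|v_{bl}(y'+t_Mn)|\ge |c_M|\cdot\tfrac12 - \sum_{k<M}2|c_k|e^{-\delta t_M} - \sum_{k>M}2|c_k|,
\end{equation*}
and by choosing the lacunary structure and the size of $c_k$ appropriately — $c_k$ not too small compared to the super-polynomial smallness of $\omega_k$ so that $|c_M|\ge t_M^{-l}$ with room to spare, but the tail $\sum_{k>M}|c_k|$ negligible compared to $|c_M|$ — the right-hand side is $\ge t_M^{-l}$.

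The main obstacle, and where care is needed, is the simultaneous bookkeeping: we must choose the frequencies $\omega_k$ (a sufficiently sparse subsequence of the small-divisor-violating frequencies), the amplitudes $c_k$, and the sample distances $t_M=1/(2\pi|\omega_M|)$ so that (i) $v_0$ is $C^\infty$ (needs $|c_k|$ to beat every power of $|\xi^{(k)}|$, but $|\xi^{(k)}|$ may grow fast — this is fine, just take $c_k$ tiny, e.g. $c_k=e^{-|\xi^{(k)}|}$, still far larger than the super-exponentially small $\omega_k$ could force us to worry about); (ii) $|c_M|\ge 2\,t_M^{-l}$ for all large $M$ — here one uses that $t_M$ is, by the violation of \eqref{smalldivisors1}, \emph{super-polynomially} large in $|\xi^{(M)}|$, so $t_M^{-l}$ is super-polynomially \emph{small} in $|\xi^{(M)}|$, easily dominated by $c_M=e^{-|\xi^{(M)}|}$; (iii) the head terms genuinely decay faster: this needs a uniform lower bound $\delta:=2\pi\min_{k<M}|\omega_k|$, which exists for each fixed window but must be handled by summing a geometric-type series — cleanest is to note $\sum_{k<M}2|c_k|e^{-2\pi|\omega_k|t_M}\le 2\big(\sum_k|c_k|\big)e^{-2\pi|\omega_{M-1}|t_M}$ and arrange $t_M$ to grow much faster than $1/|\omega_{M-1}|$, i.e. along the sparse subsequence insist $|\omega_M|\ll|\omega_{M-1}|^{2}$ or similar; (iv) the uniformity over $y'\in B(0,R)$, which is automatic from $\omega_M s(y')\to 0$. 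The only genuinely structural input is the existence of such $\omega_k$, which is exactly the negation of \eqref{smalldivisors1} together with $n\notin\mathbb R\mathbb Q^2$ (the latter guaranteeing all $\omega_k\ne 0$, so that the well-posedness and convergence of Theorem \ref{theoCVBLtail} apply and $v_{bl}$ is indeed the variational solution with $v_{bl}^\infty=0$); everything else is a diagonal/lacunary construction.
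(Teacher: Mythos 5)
Your overall architecture (explicit Fourier solution in the half-plane, Fourier support of $v_0$ concentrated on frequencies $\xi$ violating \eqref{smalldivisors1}, sampling at times where the slow mode has not yet decayed, cosine factor close to $1$ on $B(0,R)$ because $\mathrm{N}\cdot\xi_M\to0$) is the same as the paper's. But there is a genuine quantitative gap at your step (ii). You choose the amplitudes $c_k=e^{-|\xi^{(k)}|}$ and the times $t_M=1/(2\pi|\omega_M|)$, and you justify $c_M\geq 2\,t_M^{-l}$ by saying that $t_M^{-l}$ is ``super-polynomially small in $|\xi^{(M)}|$, easily dominated by $e^{-|\xi^{(M)}|}$''. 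That implication is false: super-polynomial smallness means roughly $t_M^{-l}\lesssim\bigl(M|\xi_M|^{M}\bigr)^{-l}$, and this does \emph{not} beat $e^{-|\xi_M|}$. The required inequality $e^{-|\xi_M|}\gtrsim t_M^{-l}$ amounts to $|\xi_M|\lesssim Ml\log|\xi_M|$, and you have no control over how fast $|\xi_M|$ grows: the small-divisor-violating frequencies are dictated by the arithmetic of $n$, and (for Liouville-type directions with $|\omega_M|$ comparable to its upper bound $\frac1M|\xi_M|^{-M}$ and $|\xi_M|$ growing, say, geometrically) one gets $c_M=e^{-|\xi_M|}\ll t_M^{-l}$, so the main term of your series is already below the target lower bound and the whole estimate collapses. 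The error is conceptual, not a typo: you tied the amplitude to $|\xi_M|$ and the sampling time to the \emph{actual} divisor $|\omega_M|$, and these two scales are not comparable; only the inequality $|\omega_M|<\frac1M|\xi_M|^{-M}$ is known.

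The repair is to tie \emph{both} the amplitude and the sampling time to the known divisor bound rather than to $|\xi_M|$ or to $|\omega_M|$ itself; this is what the paper does. Take $\widehat{v_0}(\pm\xi_M)=M^{-l}|\xi_M|^{-Ml}$ (still faster than any polynomial in $|\xi_M|$ since $|\xi_M|\geq M$, so $v_0\in C^\infty$) and $t_M=\frac{l\,M|\xi_M|^{M}}{2\pi}$; then $2\pi|\omega_M|t_M\leq l$, so the $M$-th mode retains a factor $\geq e^{-l}$ and has size comparable to $t_M^{-l}$ by construction. With this choice one does not even need your head/tail subtraction: after discarding the finitely many low modes so that $2\pi|\xi_M\cdot\mathrm{N}\,z_1|<\pi/4$ for all $|z_1|\leq R$, every term of the series is nonnegative at the points considered, and the single $M$-th term gives the lower bound (your triangle-inequality bookkeeping with lacunarity conditions can be made to work too, e.g. with $c_M\approx|\omega_M|^l$ suitably rescaled, but it is both more delicate and unnecessary). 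As written, however, step (ii) fails and the proof is incomplete.
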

This result means that convergence can be as slow as we wish in some sense: for fixed $n\notin\mathbb Q\mathbb R^d$ which does not satisfy the small divisors assumption, for every power function $t\mapsto t^{-l}$ with $l>0$ there exists a $1$-periodic $v_0=v_0(y)\in\mathbb R$, such that the solution $v_{bl}$ of 
\begin{equation*}
\left\{\begin{array}{rll}
-\Delta v_{bl}&=0,&y\cdot n-a>0\\
v_{bl}&=v_0(y),& y\cdot n-a=0
\end{array}\right. 
\end{equation*}
converges slower to its tail than $\left(y\cdot n\right)^{-l}$ when $y\cdot n\rightarrow\infty$. The main obstruction preventing $v_{bl}$ from converging faster in general lies indeed in the fact that the distance between a given point $\xi\in\mathbb Z^d\setminus\{0\}$ and the line $\{\lambda n,\ \lambda\in\mathbb R\}$ is not bounded from below. This is in big contrast with the small divisors case, where \eqref{smalldivisors1} asserts the existence of a lower bound. It underlines the strong dependence of the boundary layer on the interaction between $\partial\Omega_{n,a}$ and $\mathbb Z^d$.

\subsection{Organization of the paper}

In this paper we address the proofs of theorems \ref{theodvptP} (expansion of Poisson's kernel), \ref{theoCVBLtail} (convergence of the boundary layer corrector) and \ref{theoslow} (slow convergence). Section \ref{secdgvnm} is devoted to a review of the rational and small divisors settings. We insist on the existence proof in the non rational case and underline the role of the small divisors assumption in the asymptotic analysis. In section \ref{secprel}, some essential properties and estimates on Green and Poisson kernels associated to elliptic operators with periodic coefficients are recalled. We prove a uniqueness theorem for \eqref{sysbl}, which makes it possible to rely on Poisson's formula to represent the variational solution of \eqref{sysbl} and explain to what extent the description of the large scale asymptotics of Poisson's kernel $P$ boils down to an homogenization problem. The latter is the focus of sections \ref{secdual}, where we study a dual problem, and \ref{secexp} in which an asymptotic expansion of Poisson's kernel, for $y\cdot n\rightarrow\infty$, is established (see theorem \ref{theodvptP}). This work is the central step in our proof of theorem \ref{theoCVBLtail}. The last step is done in section \ref{secCV}, where we prove on the one hand the convergence towards the boundary layer tail using theorem \ref{theodvptP} and on the other hand the independence of $v_{bl}$ from $a$. Section \ref{secslow} is concerned with the proof of theorem \ref{theoslow}.

\subsection{Notations}

The following notations apply for the rest of the paper. The half-space $\{y\cdot n-a>0\}$ is denoted by $\Omega_{n,a}$ and in the sequel, for any $y\in\overline{\Omega_{n,a}}$ and $r>0$, $D(y,r):=B(y,r)\cap\Omega_{n,a}$ and $\Gamma(y,r):=B(y,r)\cap\partial\Omega_{n,a}$. The case when $a=0$ is frequently used: $\Omega_{n,0}=:\Omega_n$. For a function $H=H(y,\tilde{y})$ depending on $y,\ \tilde{y}\in\mathbb R^d$ we may use the following notation: $\partial_{1,\alpha}H:=\partial_{y_\alpha}H$ (resp. $\partial_{2,\alpha}H:=\partial_{\tilde{y}_\alpha}H$) for all $\alpha\in\{1,\ldots\ d\}$. The vectors $e_1,\ldots\ e_d$ represent the canonical basis of $\mathbb R^d$. The matrix $\mathrm{M}\in M_d\left(\mathbb R\right)$ is an orthogonal matrix such that $\mathrm{M}e_d=n$; $\mathrm{N}\in M_{d,d-1}\left(\mathbb R\right)$ is the matrix of the $d-1$ first columns of $\mathrm{M}$. All along these lines, $C>0$ denotes an arbitrary constant independent from $\varepsilon$.

\selectlanguage{english}

\pagestyle{plain}

\section{Review of the rational and small divisors settings}
\label{secdgvnm}

In this section we make a short review of the mathematical results on system \eqref{sysbl}. We concentrate on giving precise statements for the theorems announced in the introduction (cf. section \ref{subsechomobl}) and insist much more on the small divisors case, whose existence part is useful to us. To determine the role of $n$, we make the change of variable $z=\mathrm{M}^Ty$ in \eqref{sysbl}. One obtains that $v(z):=v_{bl}(\mathrm{M}z)$ solves
\begin{equation}\label{sysblz}
\left\{
\begin{array}{rll}
-\nabla\cdot B(\mathrm{M}z)\nabla v&=0,&z_d>a\\
v&=v_0(\mathrm{M}z),&z_d=a
\end{array}
\right. .
\end{equation}
The family of matrices $B=B^{\alpha\beta}(y)\in M_N\left(\mathbb R\right)$, indexed by $1\leq\alpha,\beta\leq d$, satisfies for every $i,\ j\in\{1,\ldots\ N\}$,
\begin{equation*}
B_{ij}=\mathrm{M}A_{ij}\mathrm{M}^T.
\end{equation*}
and is hence $1$-periodic, elliptic and smooth.

\subsection{Rational case}

This case has been studied by many authors (see \cite{allam,moscovog}). The assumption $n\in\mathbb Q\mathbb R^d$ simplifies much the existence and convergence proof. Indeed, as $n$ has rational coordinates, one can choose an orthogonal matrix $\mathrm{M}$ with columns in $\mathbb R\mathbb Q^d$ sending $e_d$ on $n$. Subsequently, there exists a $d$-uplet of periods $(L_1,\ldots\ L_d)$ such that 
\begin{equation*}
z\mapsto B(\mathrm{M}z)\qquad\mbox{and}\qquad z\mapsto v_0(\mathrm{M}z)
\end{equation*}
are $(L_1,\ldots\ L_d)$-periodic functions. Without loss of generality, let us assume that $L_1=\ldots=L_d=1$. We can also fix $a=0$ for the moment. We come back later to this hypothesis. Then, lifting the Dirichlet data $v_0$ using $\varphi\in C^\infty_c\left(\mathbb R\right)$, compactly supported in $[-1,1]$ such that $0\leq\varphi\leq 1$ and $\varphi\equiv 1$ on $\left[-\frac{1}{2},\frac{1}{2}\right]$, yields that $\tilde{v}:=v-\varphi(z_d)v_0(\mathrm{M}(z',0))$ solves
\begin{equation}\label{sysblztilde}
\left\{
\begin{array}{rll}
-\nabla\cdot B(\mathrm{M}z)\nabla \tilde{v}&=\nabla\cdot B(\mathrm{M}z)\nabla\left(\varphi(z_d)v_0(\mathrm{M}(z',0))\right),&z_d>0\\
\tilde{v}&=0,&z_d=0
\end{array}
\right. .
\end{equation}
An appropriate framework to write a variational formulation for \eqref{sysblztilde} is the completion of $C^\infty_c\left(\mathbb T^{d-1}\times\mathbb R_+\right)$ with respect to the $L^2\left(\mathbb T^{d-1}\times\mathbb R_+\right)$ of the gradient. The fact that the source term $\nabla\cdot B(\mathrm{M}z)\nabla\left(\varphi(z_d)v_0(\mathrm{M}(z',0))\right)$ in \eqref{sysblztilde} is compactly supported in the normal direction, allows to resort to a Poincar\'e inequality. We can prove the existence of a weak solution $\tilde{v}$ to \eqref{sysblztilde} by the mean of the Lax-Milgram lemma.

The existence part asserts that $\nabla v\in L^2\left(\mathbb T^{d-1}\times\mathbb R_+\right)$. We aim at showing that $\nabla v$ actually decays faster. The key observation is that for any $k\in\mathbb N$, $v^{(k)}:=v(z',z_d-k)$ defined for $z_d>k$ solves
\begin{equation*}
\left\{
\begin{array}{rll}
-\nabla\cdot B(\mathrm{M}z)\nabla v^{(k)}&=0,&z_d>k\\
v^{(k)}&=v_0(\mathrm{M}z),&z_d=k
\end{array}
\right. .
\end{equation*}
Hence, one has for all $k\in\mathbb N$ the St-Venant estimate
\begin{equation}\label{StVenantest}
\left\|\nabla v\right\|_{L^2\left(\mathbb T^{d-1}\times]k+1,\infty[\right)}\leq C\left[\left\|\nabla v\right\|_{L^2\left(\mathbb T^{d-1}\times]k,\infty[\right)}-\left\|\nabla v\right\|_{L^2\left(\mathbb T^{d-1}\times]k+1,\infty[\right)}\right],
\end{equation}
which yields the exponential decay of $\left\|\nabla v\right\|_{L^2\left(\mathbb T^{d-1}\times]t,\infty[\right)}$, when $t\rightarrow\infty$. The existence of the boundary layer tail $v_{bl}^{0,\infty}\in\mathbb R^N$ comes from the fact that $\int_{\mathbb T^{d-1}\times]k,k+1[}v(t)dt$ is a Cauchy sequence. The decay of higher order derivatives follows from elliptic regularity (see \cite{adn2}). This implies, through Sobolev injections, pointwise convergence of $v(z',z_d)$ towards $v_{bl}^{0,\infty}$, at an exponential rate. 

Let us come back to the assumption $a=0$. Let $a$ be any real number, $v$ the associated solution of \eqref{sysblz} and denote by $\bar{a}=a-[a]$ its fractional part. Then, $v^a=v(\cdot+ae_d)$ 
\begin{equation*}
\left\{
\begin{array}{rll}
-\nabla\cdot B\left(\mathrm{M}(z+\bar{a}e_d\right)\nabla v^a&=0,&z_d>0\\
v^a&=v_0\left(\mathrm{M}(z+\bar{a}e_d)\right),&z_d=0
\end{array}
\right. .
\end{equation*}
If we now assume that $N=1$, $d=2$ and that $B(\mathrm{M}\cdot)$ is the constant identity matrix $\Idd_2$, we can carry out Fourier analysis to compute the tail. We get that the tail $v^{a,\infty}_{bl}$ of $v$ and $v^a$ is equal to
\begin{equation*}
v_{bl}^{a,\infty}=\int_0^1v_0\left(\mathrm{M}(z',\bar{a})\right)dz',
\end{equation*}
which depends on $\bar{a}$. Thus, the boundary layer tail is not independent from $a$ in this rational setting. We now summarize the preceding results (forgetting about the assumptions $L_1=\ldots=L_d=1$ and $a=0$) in the:
\begin{theo}[lemma 4.4 in \cite{allam}, appendix $6$ in \cite{moscovog}]\label{theoexpdecayrat}
Assume that $n\in\mathbb R\mathbb Q^d$. Then,
\begin{enumerate}
\item there exists a solution $v\in C^\infty\left(\mathbb R^{d-1}\times[a,\infty[\right)$ of \eqref{sysblz}, unique under the condition that for all $R>0$
\begin{equation*}
\nabla v\in L^2\bigl((-R,R)^{d-1}\times]a,\infty[\bigr),
\end{equation*}
\item and $\kappa>0$, $v^{a,\infty}_{bl}\in\mathbb R^N$ depending on $a$ such that for all $\alpha\in\mathbb N^d$, for all $z=(z',z_d)\in\mathbb R^{d-1}\times]a,\infty[$,
\begin{equation}\label{exdecayratcase}
e^{\kappa(z_d-a)}\left|\partial_z^\alpha\left(v(z)-v^{a,\infty}_{bl}\right)\right|\leq C_\alpha.
\end{equation}
\end{enumerate}
\end{theo}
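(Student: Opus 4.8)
The statement is essentially classical (indeed the body of the text preceding it already contains the skeleton), so the plan is simply to organize the reduction--existence--decay scheme carefully. First I would transport \eqref{sysblz} to a genuinely periodic half-space problem. Since $n\in\mathbb R\mathbb Q^d$, one may choose the orthogonal $\mathrm M$ with $\mathrm M e_d=n$ and columns in $\mathbb R\mathbb Q^d$, so that $z\mapsto B(\mathrm M z)$ and $z\mapsto v_0(\mathrm M z)$ are $(L_1,\dots,L_d)$-periodic; after rescaling assume $L_1=\dots=L_d=1$, and after the normal translation indicated above assume $a=0$ (this only replaces the coefficients and datum by their $\bar a e_d$-translates, which is harmless for the argument). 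Lifting the Dirichlet datum by $\varphi(z_d)v_0(\mathrm M(z',0))$, the function $\tilde v:=v-\varphi(z_d)v_0(\mathrm M(z',0))$ solves \eqref{sysblztilde}, with source supported in $\mathbb T^{d-1}\times[0,1]$. I would set up the variational formulation in the Hilbert space $\mathcal H$ obtained by completing $C^\infty_c(\mathbb T^{d-1}\times\mathbb R_+)$ for $\|\nabla\cdot\|_{L^2(\mathbb T^{d-1}\times\mathbb R_+)}$: since elements of $\mathcal H$ vanish on $\{z_d=0\}$ and the source is compactly supported in the normal direction, a one-dimensional Poincar\'e inequality makes the Dirichlet form coercive, and Lax--Milgram yields a unique $\tilde v\in\mathcal H$. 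Thus \eqref{sysblz} has a solution $v$ with $\nabla v\in L^2(\mathbb T^{d-1}\times\mathbb R_+)$, and interior plus boundary elliptic regularity (\cite{adn2}), using $A,v_0$ smooth and the boundary flat, upgrades it to $v\in C^\infty(\mathbb T^{d-1}\times[0,\infty))$, i.e.\ $v\in C^\infty(\mathbb R^{d-1}\times[a,\infty))$ after undoing the reductions.

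The core is the exponential decay. For an integer $k\ge 0$ put $E(k):=\|\nabla v\|^2_{L^2(\mathbb T^{d-1}\times]k,\infty[)}$; the translation invariance in the normal direction (available exactly because the coefficients and datum are $1$-periodic in $z_d$, so that $v^{(k)}(z):=v(z',z_d-k)$ solves on $\{z_d>k\}$ the same boundary value problem) is what makes the decay geometric with a $k$-independent ratio. Concretely I would test the homogeneous equation $-\nabla\cdot B(\mathrm M\cdot)\nabla v=0$ against $\chi^2(v-c_k)$, with $\chi=\chi(z_d)$ equal to $1$ on $\{z_d>k+1\}$ and $0$ on $\{z_d<k\}$ and $c_k$ the mean of $v$ over the slab $\mathbb T^{d-1}\times(k,k+1)$; ellipticity on the left, Cauchy--Schwarz and Poincar\'e on the slab on the right give the Saint-Venant inequality \eqref{StVenantest}, hence $E(k+1)\le\theta E(k)$ with $\theta:=C/(C+1)<1$, so $E(k)\le\theta^kE(0)$ and $\|\nabla v\|_{L^2(\mathbb T^{d-1}\times]t,\infty[)}$ decays exponentially. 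Then $|c_{k+1}-c_k|\lesssim\|\nabla v\|_{L^2(\mathbb T^{d-1}\times(k,k+2))}\lesssim\theta^{k/2}$, so $(c_k)$ is Cauchy; call the limit $v^{a,\infty}_{bl}$. Poincar\'e on each slab gives $\|v-v^{a,\infty}_{bl}\|_{L^2(\mathbb T^{d-1}\times(k,k+1))}\lesssim\sum_{j\ge k}\theta^{j/2}\lesssim\theta^{k/2}$, and interior Schauder/elliptic estimates on unit balls centred in $\{z_d>k+1\}$ (resp.\ boundary estimates near $\{z_d=0\}$) convert this $L^2$ decay into pointwise exponential decay of $v-v^{a,\infty}_{bl}$ and of all its derivatives, which is exactly \eqref{exdecayratcase}.

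Two loose ends remain. Uniqueness under the stated condition: after periodization the condition reads $\nabla v\in L^2(\mathbb T^{d-1}\times\mathbb R_+)$, and a difference $w$ of two such solutions has zero trace and $L^2$ gradient; testing against $w$ times a cutoff in $z_d$ and pushing the cutoff to infinity (using $\nabla w\in L^2$, and the fact that the normal flux $\int_{\mathbb T^{d-1}}(B(\mathrm M\cdot)\nabla w)_d\,dz'$ is constant in $z_d$ and vanishes because $w=0$ on $\{z_d=0\}$) forces $\nabla w\equiv 0$, hence $w\equiv 0$. Dependence on $a$: it suffices to exhibit it in the model case $d=2$, $N=1$, $B(\mathrm M\cdot)=\Idd_2$, where expanding $v$ in a Fourier series in $z'$ decouples the modes, only the zeroth mode is non-decaying, and it equals $\int_0^1v_0(\mathrm M(z',\bar a))\,dz'$, which genuinely depends on $\bar a=a-[a]$.

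The proof is classical and I expect no conceptual obstacle; the only points requiring real care are technical and all live in the decay step: justifying the test-function manipulations on the unbounded domain (truncation at large $z_d$ and control of the resulting boundary terms by $\nabla v\in L^2$), choosing the subtracted constant $c_k$ so that Caccioppoli closes, and the passage from ``$\nabla v$ is small far out'' to ``$v$ converges and all its derivatives are small far out''. The analogous truncation issue resurfaces, more mildly, in the uniqueness argument.
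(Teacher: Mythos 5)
Your proposal follows exactly the route the paper itself sketches in Section 2: rational change of basis and periodization, lifting plus Lax--Milgram with a Poincar\'e inequality, the translation-invariance-based Saint-Venant estimate \eqref{StVenantest} giving geometric decay of the tail energy, Cauchy slab averages defining $v^{a,\infty}_{bl}$, elliptic regularity converting the $L^2$ decay into pointwise exponential decay of all derivatives, and the Fourier computation exhibiting the dependence on $a$ --- so it is essentially the paper's own argument, correctly fleshed out. The one caveat is the uniqueness step: the stated class only requires $\nabla v\in L^2\bigl((-R,R)^{d-1}\times]a,\infty[\bigr)$ for every $R$, so a competitor need not be periodic in $z'$ and your reduction ``after periodization the condition reads $\nabla v\in L^2\left(\mathbb T^{d-1}\times\mathbb R_+\right)$'' is not automatic; as written your energy argument yields uniqueness only within the periodic class (the paper itself defers this point to the cited references).
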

We conclude this section by a remark. It is concerned with the exponent $\kappa=\kappa_n$ in \eqref{exdecayratcase}. In \cite{Neuss00}, the case of a two-dimensional layered media $\Omega_{n,0}$ is considered. It is shown that exponential convergence of $\nabla v_{bl}$ to $0$, uniform in $n\in\mathbb R\mathbb Q^d$, cannot be expected. Indeed, depending on $n$, $\kappa_n$ can be arbitrarily small.

\subsection{Small divisors case}
\label{subsecdgvnmdiv}
All the results we recall here stem from the original article \cite{dgvnm} by G\'erard-Varet and Masmoudi. The periodic framework in the rational case makes the study of \eqref{sysblz} more simple for the main reason that it yields compactness in the horizontal direction. One can thus rely on Poincar\'e-Wirtinger inequalities, which are essential to prove the St-Venant estimate \eqref{StVenantest}. In the non rational case, i.e. when $n\notin\mathbb Q\mathbb R^d$, one also attempts to recover a periodic setting. Note that for all $z=(z',a)\in\mathbb R^{d-1}\times\{a\}$, $v_0\left(\mathrm{M}(z',a)\right)=v_0\left(\mathrm{N}z'+\mathrm{M}(0,a)\right)$, where $\mathrm{N}\in M_{d,d-1}\left(\mathbb R\right)$ is the matrix of the $d-1$ first columns of $\mathrm{M}$. Therefore, $v_0(\mathrm{M}\cdot)$ is quasiperiodic in $z'$, i.e. there exists $V_0=V_0(\theta,t)\in\mathbb R^N$ defined for $\theta\in\mathbb T^d$ and $t\in\mathbb R$ such that for all $z=(z',a)\in\mathbb R^{d-1}\times\{a\}$, $v_0\left(\mathrm{M}(z',a)\right)=V_0(\mathrm{N}z',a)$. Similarly, there exists $\mathcal B=\mathcal B(\theta,t)$ such that for all $z=(z',z_d)\in\mathbb R^{d-1}\times\mathbb R$, $B\left(\mathrm{M}(z',z_d)\right)=\mathcal B(\mathrm{N}z',z_d)$. Hence, one looks for a quasiperiodic solution of \eqref{sysblz}; for details concerning quasiperiodic functions the reader is referred to \cite{JKO}, section $7.1$.  Assume that there exists $V=V(\theta,t)\in\mathbb R^N$ defined for $\theta\in\mathbb T^d$ and $t>a$ such that for all $z=(z',z_d)\in\mathbb R^{d-1}\times]a,\infty[$,
\begin{equation*}
v\left(\mathrm{M}(z',z_d)\right)=V(\mathrm{N}z',z_d).
\end{equation*}
Now, if $V$ solves
\begin{equation}\label{sysVthetat}
\left\{
\begin{array}{rll}
-\begin{pmatrix}
\mathrm{N}^T\nabla_\theta\\
\partial_t
\end{pmatrix}\cdot
\mathcal B(\theta,t)
\begin{pmatrix}
\mathrm{N}^T\nabla_\theta\\
\partial_t
\end{pmatrix}V&=0,&t>a\\
V&=V_0,&t=a
\end{array}
\right. ,
\end{equation}
then $v=v(z',z_d)=V(\mathrm{N}z',z_d)$ solves \eqref{sysblz}. 

We focus now on system \eqref{sysVthetat}. We examine successively the existence of a solution and the convergence towards a constant vector field. By making the change of unknown function, we have gained compactness in the horizontal direction, but lost ellipticity of the differential operator. System \eqref{sysVthetat} is however well-posed (see \cite{dgvnm} proposition $2$): \emph{there exists a unique variational solution $V\in C^\infty\left(\mathbb T^d\times]a,\infty[\right)$ such that for all $l\geq 0$, for all $\alpha\in\mathbb N^d$,}
\begin{equation}\label{aprioriboundV}
\left\|\partial^l_t\partial^\alpha_\theta\begin{pmatrix}
\mathrm{N}^T\nabla_\theta\\
\partial_t
\end{pmatrix}\right\|_{L^2\left(\mathbb T^d\times]a,\infty[\right)}\!\!=\int_{\mathbb T^d}\!\int_a^\infty\!\left(\bigl|N^T\nabla_\theta\partial^l_t\partial^\alpha_\theta V\bigr|^2\!+\bigl|\partial^{l+1}_t\partial^\alpha_\theta V\bigr|^2\right)d\theta dt<\infty.
\end{equation}
To compensate for the lack of ellipticity, one uses a regularization method. We add a corrective term $-\iota\Delta$ to the operator. The regularized system is 
\begin{equation}\label{sysVthetatreg}
\left\{
\begin{array}{rll}
-\begin{pmatrix}
\mathrm{N}^T\nabla_\theta\\
\partial_t
\end{pmatrix}\cdot
\mathcal B(\theta,t)
\begin{pmatrix}
\mathrm{N}^T\nabla_\theta\\
\partial_t
\end{pmatrix}V^\iota-\iota\Delta V^\iota&=0,&t>a\\
V^\iota&=V_0,&t=a
\end{array}
\right. .
\end{equation}
The regularizing term yields ellipticity of the differential operator. Lifting the boundary data $V_0$ into a function compactly supported in the direction normal to the boundary, makes it possible, as in the rational case, to prove the existence of a variational solution $V^\iota$ to \eqref{sysVthetatreg}. Carrying out energy estimates on \eqref{sysVthetatreg} gives \eqref{aprioriboundV}-like a priori bounds on $V^\iota$, uniform in $\iota$. Thanks to this compactness on the sequence $\left(V^\iota\right)_{\iota>0}$, one can extract a subsequence, which converges weakly to $V$ variational solution of \eqref{sysVthetat} when $\iota\rightarrow 0$. Uniqueness follows from the a priori bounds. An important point is that the justification of this well-posedness result does not resort to the small divisors assumption. 

We come to the asymptotical analysis far away from the boundary. Sobolev injections and the a priori bounds \eqref{aprioriboundV} yield pointwise convergence to $0$ of $\mathrm{N}^T\nabla_\theta V(\theta,t)$, $\partial_t V(\theta,t)$ and their derivatives, when $t\rightarrow\infty$, uniformly in $\theta\in\mathbb T^d$. Let us investigate this convergence in more detail. Without loss of generality, we assume temporarily that $a=0$. The idea is to establish a St-Venant estimate on $\begin{pmatrix}
\mathrm{N}^T\nabla_\theta\\
\partial_t
\end{pmatrix}V$. In the same spirit as in the rational case, we look at the quantity
\begin{equation*}
K(T):=\int_{\mathbb T^d}\int_T^\infty\left(\bigl|\mathrm{N}^T\nabla_\theta V\bigr|^2+\bigl|\partial_t V\bigr|^2\right)d\theta dt
\end{equation*}
defined for $T>0$. One proves that
\begin{equation}\label{integrodiffdiv}
K(T)\leq C\left(-K'(T)\right)^\frac{1}{2}\left(\int_{\mathbb T^d}\bigl|\widetilde{V}(\theta,T)\bigr|^2d\theta\right)^\frac{1}{2},
\end{equation}
with $\widetilde{V}(\theta,T):=V(\theta,T)-\int_{\mathbb T^d}V(\cdot,T)$. The stake is to control the second factor in the r.h.s. of \eqref{integrodiffdiv}. The key argument of \cite{dgvnm} is a type of Poincar\'e-Wirtinger inequality implied by the small divisors assumption. \emph{Assume that $n\notin\mathbb R\mathbb Q^d$ satisfies \eqref{smalldivisors1}. Then, for all $1<p<\infty$, there exists $C_p>0$ such that}
\begin{equation}\label{smalldivminoration}
\int_{\mathbb T^d}\bigl|\widetilde{V}(\theta,T)\bigr|^2d\theta\leq C_p\left(\int_{\mathbb T^d}\bigl|\mathrm{N}^T\nabla_\theta V(\theta,T)\bigr|^2d\theta\right)^\frac{1}{p}.
\end{equation}
Note that the r.h.s. does not involve the $L^2(\mathbb T^d)$ norm of $\nabla_\theta V$, but the norm of the incomplete gradient $\mathrm{N}^T\nabla_\theta V$. We give a sketch of the reasoning leading to \eqref{smalldivminoration}. The proof relies on Fourier analysis in the tangential variable. In order to emphasize the role of the small divisors assumption, let us give an equivalent statement of \eqref{smalldivisors1}: taking for $n_1,\ldots\ n_{d-1}$ the $d-1$ first columns $\mathrm{N}\in M_{d,d-1}\left(\mathbb R\right)$ of $\mathrm{M}$, \eqref{smalldivisors1} becomes
\begin{equation}\label{smalldivisors2}
\left|\mathrm{N}^T\xi\right|\geq C\left|\xi\right|^{-d-\tau}.
\end{equation}
Let $T>0$ be fixed, $1<p<\infty$ and $p'$ its conjugate H\"older exponent: $\frac{1}{p}+\frac{1}{p'}=1$. By Parceval's equality we can compute the $L^2\left(\mathbb T^d\right)$ norm on the l.h.s. of \eqref{smalldivminoration} and then apply H\"older's inequality 
\begin{align*}
\int_{\mathbb T^d}\bigl|\widetilde{V}(\theta,T)\bigr|^2d\theta&\leq\left(\sum_{\xi\in\mathbb Z^d\setminus\{0\}}\Bigl|\widehat{\widetilde{V}}(\xi,T)\Bigr|^2\frac{1}{|\xi|^{2(d+\tau)}}\right)^\frac{1}{p}\left(\sum_{\xi\in\mathbb Z^d\setminus\{0\}}\Bigl|\widehat{\widetilde{V}}(\xi,T)\Bigr|^2|\xi|^{2\frac{d+\tau}{p-1}}\right)^\frac{1}{p'},
\end{align*}
with $\alpha:=\frac{1}{p}$, $\beta:=\frac{1}{p'}$ and $\gamma:=\frac{d+\tau}{p}=\frac{d+\tau}{p'(p-1)}$. For the first factor in the r.h.s., which represents the norm of $\widetilde{V}(\cdot,T)$ in an homogeneous Sobolev space with negative exponent, we have, using \eqref{smalldivisors2}:
\begin{multline*}
\left\|\mathrm{N}^T\nabla_\theta\widetilde{V}(\theta,T)\right\|^2_{L^2\left(\mathbb T^d\right)}=\sum_{\xi\in\mathbb Z^d\setminus\{0\}}\Bigl|2i\pi\widehat{\widetilde{V}}(\xi,T)\Bigr|^2\left|\mathrm{N}^T\xi\right|^2\geq C\sum_{\xi\in\mathbb Z^d\setminus\{0\}}\Bigl|\widehat{\widetilde{V}}(\xi,T)\Bigr|^2\frac{1}{\left|\xi\right|^{2(d+\tau)}}.
\end{multline*}
This bound and \eqref{aprioriboundV} imply in particular a control on $\bigl\|\widetilde{V}(\cdot,T)\bigr\|_{H^s\left(\mathbb T^d\right)}$ for all $s\geq 0$. Hence, one bounds the second factor by a constant independent from $T$.

The bound \eqref{smalldivisors2}, in combination with \eqref{integrodiffdiv}, yields for every $1<p<\infty$ the differential inequation on $K$, $K(T)\leq C_p\left(-K'(T)\right)^\frac{p+1}{2p}$ from which we get the decay: for all $T>0$, $0\leq K(T)\leq C_p T^\frac{p+1}{1-p}$. We do the same on higher order derivatives considering for $s\in\mathbb N$,
\begin{equation*}
K_s(T):=\sum_{0\leq|\alpha|+l\leq s}\int_{\mathbb T^d}\int_T^\infty\left(\bigl|\mathrm{N}^T\partial^l_t\partial^\alpha_\theta\nabla_\theta V\bigr|^2+\bigl|\partial^{l+1}_t\partial^\alpha_\theta V\bigr|^2\right)d\theta dt.
\end{equation*}
The existence of a boundary layer tail follows from a procedure similar to the rational case. For arbitrary $a\in\mathbb R$, one can prove (see proposition $5$ in \cite{dgvnm}) that the boundary layer tail does not depend on $a$. This fact is a characteristic of the non rational setting: G\'erard-Varet and Masmoudi prove it under the small divisors assumption. Note that we manage to free ourselves from this assumption in section \ref{secindep}. To summarize (see \cite{dgvnm} propositions $4$ and $5$): \emph{if $n\notin\mathbb R\mathbb Q^d$ satisfies the small divisors assumption \eqref{smalldivisors2}, then there exists a constant vector field $v_{bl}^{\infty}\in\mathbb R^N$, independent from $a$, such that for all $m\in\mathbb N$, for all $\alpha\in\mathbb N^d$, $l\in\mathbb N$, for all $t>a$,}
\begin{equation*}
\sup_{\theta\in\mathbb T^d}\left|(1+|t-a|^m)\partial^\alpha_\theta\partial^l_t\left(V(\theta,t)-v^{\infty}_{bl}\right)\right|\leq C_{m,\alpha}.
\end{equation*}

We end this section by a translation of the existence and convergence statement for $V$ into a statement for $v$ solution of \eqref{sysblz}. We recall that for all $z=(z',z_d)\in\mathbb R^{d-1}\times]a,\infty[$, $v(z',z_d)=V(\mathrm{N}z',z_d)$. A solution $v$ of \eqref{sysblz} (or $v_{bl}$ of \eqref{sysbl}) built like this is called a \emph{variational solution}.
\begin{theo}[\cite{dgvnm}]\label{theodecaydiv}
Assume that $n\notin\mathbb R\mathbb Q^d$.
\begin{enumerate}
\item Then, there exists a solution $v\in C^\infty\left(\mathbb R^{d-1}\times[a,\infty[\right)$ of \eqref{sysblz} satisfying
\begin{subequations}
\begin{align}
&\left\|\nabla v\right\|_{L^\infty\left(\mathbb R^{d-1}\times]t,\infty[\right)}\stackrel{t\rightarrow\infty}{\longrightarrow}0\label{nablavtends0}\\
&\int_a^\infty\left\|\partial_{z_d}v(\cdot,t)\right\|^2_{L^\infty\left(\mathbb R^{d-1}\right)}dt<\infty \label{partial_z_dv}.
\end{align}
\end{subequations}
\item Moreover, if $n$ satisfies the small divisors assumption \eqref{smalldivisors1}, then for all $m\in\mathbb N$, for all $\alpha\in\mathbb N^d$, for all $z=(z',z_d)\in\mathbb R^{d-1}\times]a,\infty[$,
\begin{equation*}
\left(1+|z_d-a|^m\right)\left|\partial^\alpha_z\left(v(z)-v_{bl}^\infty\right)\right|\leq C_{\alpha,m}.
\end{equation*}
\end{enumerate}
\end{theo}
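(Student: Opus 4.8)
I would prove the theorem by transferring to $v$ the analysis of the quasiperiodic profile $V$ recalled above; I only indicate the main points, the details being those of \cite{dgvnm}. Recall that one looks for $v$ in the form $v(z',z_d)=V(\mathrm{N}z',z_d)$ with $V$ solving \eqref{sysVthetat}, the quasiperiodicity of $z'\mapsto B(\mathrm{M}z)$ and $z'\mapsto v_0(\mathrm{M}z)$ furnishing the $\mathbb T^d$-periodic data $\mathcal B$ and $V_0$. Under this substitution $\nabla_{z'}v=\mathrm{N}^T\nabla_\theta V$ and $\partial_{z_d}v=\partial_t V$, so that $|\nabla_z v|^2=|\mathrm{N}^T\nabla_\theta V|^2+|\partial_t V|^2$ is exactly the quantity occurring in \eqref{aprioriboundV}; hence every estimate on $V$ transfers verbatim to $v$, and it suffices to analyse $V$.

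For assertion (1) I would first establish the well-posedness of \eqref{sysVthetat}. Lifting $V_0$ by a function compactly supported in $t$ reduces matters to zero boundary data with a source compactly supported in the normal direction; adding the regularising term $-\iota\Delta$, as in \eqref{sysVthetatreg}, restores ellipticity, so that Lax--Milgram together with the Poincar\'e inequality in $t$ provides a variational solution $V^\iota$. Energy estimates on \eqref{sysVthetatreg}, and on the systems satisfied by its tangential and normal derivatives, yield the a priori bounds \eqref{aprioriboundV} uniformly in $\iota$; a weak-compactness argument then produces $V$, and uniqueness follows from the same bounds. Smoothness of $V$, hence of $v$, and the decay statements \eqref{nablavtends0}, \eqref{partial_z_dv} are then consequences of \eqref{aprioriboundV}: Sobolev embedding in $(\theta,t)$ applied at every order turns the finiteness of $\int_a^\infty\int_{\mathbb T^d}(|\mathrm{N}^T\nabla_\theta V|^2+|\partial_t V|^2)$ and of its higher-order analogues into $\|\nabla v\|_{L^\infty(\mathbb R^{d-1}\times]t,\infty[)}\to 0$ and into $\int_a^\infty\|\partial_{z_d}v(\cdot,t)\|_{L^\infty(\mathbb R^{d-1})}^2\,dt<\infty$. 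This part of the statement uses no arithmetic hypothesis on $n$.

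For assertion (2) I would run the St-Venant argument sketched above. A St-Venant-type energy computation on \eqref{sysVthetat}, using the ellipticity of $\mathcal B$ in the incomplete gradient and the Cauchy--Schwarz inequality, gives the integro-differential inequality \eqref{integrodiffdiv} for $K(T)=\int_{\mathbb T^d}\int_T^\infty(|\mathrm{N}^T\nabla_\theta V|^2+|\partial_t V|^2)$, with $\widetilde V(\theta,t)=V(\theta,t)-\int_{\mathbb T^d}V(\cdot,t)$. The small divisors assumption enters only through the Poincar\'e--Wirtinger-type bound \eqref{smalldivminoration}: expanding in Fourier series on $\mathbb T^d$, inserting the lower bound $|\mathrm{N}^T\xi|\geq C|\xi|^{-d-\tau}$ of \eqref{smalldivisors2}, and interpolating by H\"older's inequality between the resulting negative Sobolev norm of $\widetilde V(\cdot,T)$ and an $H^s(\mathbb T^d)$ norm bounded uniformly in $T$ by \eqref{aprioriboundV}, one controls $\int_{\mathbb T^d}|\widetilde V(\cdot,T)|^2$ by $C_p\big(\int_{\mathbb T^d}|\mathrm{N}^T\nabla_\theta V(\cdot,T)|^2\big)^{1/p}$ for every $p\in(1,\infty)$. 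Combined with \eqref{integrodiffdiv} this yields $K(T)\leq C_p(-K'(T))^{(p+1)/(2p)}$, hence $K(T)\leq C_p\,T^{(p+1)/(1-p)}$; repeating the scheme with the higher-order tail energies $K_s$ and letting $p\to 1^+$ gives decay of $K_s(T)$ faster than any power of $T-a$, whence, by Sobolev embedding, pointwise polynomial decay of $\mathrm{N}^T\nabla_\theta V$, $\partial_tV$ and their derivatives, and of $\widetilde V$ via \eqref{smalldivminoration}. Since $\int_{\mathbb T^d}V(\cdot,T)\,d\theta$ has increments controlled by $\int\partial_t V$, it converges as $T\to\infty$ to a constant $v_{bl}^\infty$, and combining the decay of $\widetilde V$ with that of the mean gives the stated bounds on $\partial_z^\alpha(v(z)-v_{bl}^\infty)$. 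Independence of $v_{bl}^\infty$ from $a$ is obtained, as in \cite{dgvnm}, by comparing the solutions attached to $a$ and to a translated boundary and invoking the decay just proved.

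The main obstacle is the Poincar\'e--Wirtinger inequality \eqref{smalldivminoration}: one has to dominate the full oscillation $\widetilde V(\cdot,T)$ by a power of the norm of the \emph{incomplete} gradient $\mathrm{N}^T\nabla_\theta V(\cdot,T)$, and it is exactly here that \eqref{smalldivisors1} is indispensable, keeping the Fourier multiplier $|\mathrm{N}^T\xi|$ from being too small so that the low-frequency loss can be compensated by the high-order regularity of $\widetilde V$ coming from \eqref{aprioriboundV} — which is why one needs the bounds \eqref{aprioriboundV} at every order. Absent such a lower bound the tangential gradient carries no quantitative information on $\widetilde V$, which is precisely the degeneracy later exploited in Theorem \ref{theoslow}.
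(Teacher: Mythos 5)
Your proposal is correct and follows essentially the same route as the paper's review of \cite{dgvnm}: quasiperiodic reformulation via $V(\mathrm{N}z',z_d)$, regularization by $-\iota\Delta$ with Lax--Milgram and uniform a priori bounds \eqref{aprioriboundV} for existence and the decay statements \eqref{nablavtends0}--\eqref{partial_z_dv}, then the St-Venant inequality \eqref{integrodiffdiv} combined with the small-divisors Poincar\'e--Wirtinger bound \eqref{smalldivminoration} and the higher-order energies $K_s$ for the quantitative convergence in part (2). The only point worth stressing is the one you already make: the full hierarchy of bounds \eqref{aprioriboundV} is needed because the H\"older exponent in \eqref{smalldivminoration} forces control of $H^s(\mathbb T^d)$ norms of arbitrarily high order as $p\to 1^+$.
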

Estimates \eqref{nablavtends0} and \eqref{partial_z_dv} rely on \eqref{aprioriboundV}. Indeed, for every $k\in\mathbb N$,
\begin{equation*}
\left\|\mathrm{N}^T\nabla_\theta V\right\|_{H^k\left(\mathbb T^d\times]t,\infty[\right)}+\left\|\partial_t V\right\|_{H^k\left(\mathbb T^d\times]t,\infty[\right)}\stackrel{t\rightarrow\infty}{\longrightarrow} 0
\end{equation*}
which together with Sobolev's injection theorem yields \eqref{nablavtends0} for $k$ sufficiently big. As 
\begin{equation*}
\left\|\partial_t V\right\|_{H^k\left(\mathbb T^d\times]a,\infty[\right)}\geq \left\|\left\|\partial_t V\right\|_{H^k_\theta\left(\mathbb T^d\right)}\right\|_{L^2_t\left(]0,\infty[\right)},
\end{equation*}
a similar reasoning leads to \eqref{partial_z_dv}. The a priori bounds \eqref{aprioriboundV}, contain further information: for $k\geq 0$ sufficiently large and $k'\geq 1$,
\begin{multline}\label{ineqintnormalderivees}
\left\|\begin{pmatrix}
\mathrm{N}^T\nabla_\theta\\
\partial_t
\end{pmatrix}V\right\|_{H^{k'+k-1}\left(\mathbb T^d\times]a,\infty[\right)}
\geq 
\left\|\left\|\begin{pmatrix}
\nabla_\theta\\
\partial_t
\end{pmatrix}^{k'-1}\begin{pmatrix}
\mathrm{N}^T\nabla_\theta\\
\partial_t
\end{pmatrix}V\right\|_{H^k_\theta\left(\mathbb T^d\right)}\right\|_{L^2_t\left(]a,\infty[\right)}\\
\geq C
\left\|\left\|\begin{pmatrix}
\nabla_\theta\\
\partial_t
\end{pmatrix}^{k'-1}\begin{pmatrix}
\mathrm{N}^T\nabla_\theta\\
\partial_t
\end{pmatrix}V\right\|_{L^\infty_\theta\left(\mathbb T^d\right)}\right\|_{L^2_t\left(]a,\infty[\right)}
\geq C
\left\|\left\|\nabla^{k'}v\right\|_{L^\infty_{z'}\left(\mathbb R^{d-1}\right)}\right\|_{L^2_{z_d}\left(]a,\infty[\right)}.
\end{multline}
We resort to the latter in section \ref{sechompoisson} with $k'=1$ or $2$.
\selectlanguage{english}

\pagestyle{plain}

\section{Poisson's integral representation of the variational solution}
\label{secprel}

\emph{One can always assume that $a=0$. We do so for the rest of the paper (except in section \ref{secindep}), even if it means to work with $v_{bl}^a:=v_{bl}(\cdot+an)$ instead of $v_{bl}$ solving \eqref{sysbl}.} The main advantage of this assumption is that the domain $\Omega_n=\{y\cdot n>0\}$ is invariant under the scaling $y\mapsto\varepsilon y$ for $\varepsilon>0$. The purpose of this section (see in particular section \ref{secintreprfor}) is to prove uniqueness for the solution to \eqref{sysbl} in a larger class than that of \cite{dgvnm}. This result (theorem \ref{theov=w} below) allows to represent the variational solution by the mean of Poisson's kernel.

\subsection{Estimates on Green's and Poisson's kernels}
\label{subsecGreenPoisson}

Let $G=G\left(y,\tilde{y}\right)\in M_N(\mathbb R)$ solving, for all $\tilde{y}\in\Omega_n$, the elliptic system
\begin{equation}\label{sysGreen}
\left\{
\begin{array}{rll}
-\nabla_y\cdot A(y)\nabla_y G\left(y,\tilde{y}\right)&=\delta(y-\tilde{y})\Idd_N, & y\cdot n>0\\
G(y,\tilde{y})&=0,& y\cdot n=0
\end{array}
\right. ,
\end{equation}
with source term $\delta(\cdot-\tilde{y})$ ($\delta(\cdot)$ is the Dirac distribution). The matrix $G$ is called the Green kernel associated to the operator $-\nabla\cdot A(y)\nabla\cdot$ and to the domain $\Omega_n$. Its existence in $\Omega_n$ is proved for $d\geq 3$ in \cite{HofKim07} (see theorem $4.1$), and for $d=2$ in \cite{DK09} (see theorem $2.12$). Similarly, $G^*=G^*\left(y,\tilde{y}\right)\in M_N(\mathbb R)$ is the Green kernel associated to the transposed operator $-\nabla\cdot A^*(y)\nabla\cdot$ and the domain $\Omega_n$, $A^*$ being the transpose of the tensor $A$ defined for all $\alpha,\ \beta\in\{1,\ldots\ d\}$ and for all $i,\ j\in\{1,\ldots N\}$ by 
\begin{equation*}
\left(A^*\right)^{\alpha\beta}_{ij}=A^{\beta\alpha}_{ji}=\bigl(A^{\beta\alpha}\bigr)^T_{ij}.
\end{equation*}
From the smoothness of $A$ and local regularity estimates, it follows that 
\begin{equation*}
G\in C^\infty\left(\Omega_n\times\Omega_n\setminus\{y=\tilde{y}\}\right).
\end{equation*}
Moreover, the following symmetry property holds: for all $y,\ \tilde{y}\in\Omega_n$, 
\begin{equation*}
G^T(y,\tilde{y})=G^*(\tilde{y},y).
\end{equation*}
Using Green's kernel, one defines another function, the Poisson kernel $P=P\left(x,\tilde{x}\right)\in M_N(\mathbb R)$: for all $i,\ j\in\{1,\ldots N\}$, for all $y\in\Omega_n$, $\tilde{y}\in\partial\Omega_n$,
\begin{subequations}\label{exprP}
\begin{align}
P_{ij}(y,\tilde{y})&:=-A^{\alpha\beta}_{kj}(\tilde{y})\partial_{\tilde{y}_\alpha}G_{ik}(y,\tilde{y})n_\beta\\
&=-\left(A^*\right)^{\beta\alpha}_{jk}(\tilde{y})\partial_{\tilde{y}_\alpha}G^*_{ki}(\tilde{y},y)n_\beta\\
&=-\left[A^{*,\beta\alpha}(\tilde{y})\partial_{\tilde{y}_\alpha}G^*(\tilde{y},y)n_\beta\right]_{ji}\\
&=-\left[A^*(\tilde{y})\nabla_{\tilde{y}}G^*(\tilde{y},y)\cdot n\right]^T_{ij}.
\end{align}
\end{subequations}
The kernel $P^*=P^*\left(y,\tilde{y}\right)\in M_N(\mathbb R)$ is defined in the same way as $G^*$.

If one considers Green's kernel $G^0=G^0(y,\tilde{y})\in M_N\left(\mathbb R\right)$ associated to the constant coefficients elliptic operator $-\nabla\cdot A^0\nabla\cdot$ and to the domain $\Omega_n$, there exists $C>0$ (see \cite{adn2} section $6$ for a statement and \cite{Schuwil77} V.$4.2$ (Satz $3$) for a proof) such that for all $\Lambda_1,\ \Lambda_2\in \mathbb N^d$, for all $y,\ \tilde{y}\in\Omega_n$, $y\neq \tilde{y}$,
\begin{subequations}\label{estG0lambda}
\begin{alignat}{2}
\left|G^0(y,\tilde{y})\right|&\leq C\left(\left|\logg\left|y-\tilde{y}\right|\right|+1\right),&\quad\mbox{if } d=2,\\
\left|G^0(y,\tilde{y})\right|&\leq \frac{C}{\left|y-\tilde{y}\right|^{d-2}},&\quad\mbox{if } d\geq 3,\\
\left|\partial_y^{\Lambda_1}\partial_{\tilde{y}}^{\Lambda_2}G^0(y,\tilde{y})\right|&\leq \frac{C}{\left|y-\tilde{y}\right|^{d-2+|\Lambda_1|+|\Lambda_2|}},&\quad\mbox{if } |\Lambda_1|+|\Lambda_2|\geq 1.
\end{alignat}
\end{subequations}
One has similar estimates on Poisson's kernel $P^0=P^0(y,\tilde{y})\in M_N\left(\mathbb R\right)$ and its derivatives. Such bounds on Green's kernel and its derivatives are not known for operators with non constant coefficients. Let us temporarily assume that neither {\bf (A2)} nor {\bf (A3)} hold. We know then from \cite{KangKim10} (see theorem $3.3$) that under certain smoothness assumptions on the coefficients there exists $R_{max}\in]0,\infty]$ and $C>0$, such that for all $y,\ \tilde{y}\in\Omega_n$, $|y-\tilde{y}|\!<\!R_{max}$ implies
\begin{equation}\label{estKangKim10}
|G(y,\tilde{y})|\leq \frac{C}{|y-\tilde{y}|^{d-2}}.
\end{equation}
If $R_{max}<\infty$, estimate \eqref{estKangKim10} does not provide a control of $G(y,\tilde{y})$ for $y$ and $\tilde{y}$ far from each other. However, under the periodicity assumption {\bf (A2)}, we have the following improved global bounds:
\begin{lem}[\cite{alin} theorem $13$ and lemma $21$, \cite{dgvnm2} lemma $6$]
There exists $C>0$, such that
\begin{enumerate} 
\item for all $y,\ \tilde{y}\in\Omega_n$, $y\neq\tilde{y}$,
\begin{subequations}\label{estALinG}
\begin{alignat}{2}
\left|G(y,\tilde{y})\right|&\leq C\left(\left|\logg\left|y-\tilde{y}\right|\right|+1\right),&\quad\mbox{if } d=2,\label{estALinGd=2}\\
\left|G(y,\tilde{y})\right|&\leq\frac{C}{\left|y-\tilde{y}\right|^{d-2}},&\quad\mbox{if } d\geq 3,\label{estALinGd>2}
\end{alignat}
and for all $d\geq 2$,
\begin{align}
\left|G(y,\tilde{y})\right|&\leq C\frac{\left(y\cdot n\right)\left(\tilde{y}\cdot n\right)}{\left|y-\tilde{y}\right|^d},\label{estALinGdqcq}\\
\left|\nabla_yG(y,\tilde{y})\right|&\leq \frac{C}{\left|y-\tilde{y}\right|^{d-1}},\label{estALinnablaGdqcq}\\
\left|\nabla_yG(y,\tilde{y})\right|&\leq C\left(\frac{\tilde{y}\cdot n}{\left|y-\tilde{y}\right|^d}+\frac{\left(y\cdot n\right)\left(\tilde{y}\cdot n\right)}{\left|y-\tilde{y}\right|^{d+1}}\right),\label{estALinnablaGdqcq2}\\
\left|\nabla_y\nabla_{\tilde{y}}G(y,\tilde{y})\right|&\leq \frac{C}{\left|y-\tilde{y}\right|^d};
\end{align}
\end{subequations}
\item for all $y\in\Omega_n$, for all $\tilde{y}\in\partial\Omega_n$, for all $d\geq 2$,
\begin{subequations}\label{estALinP}
\begin{align}
\left|P(y,\tilde{y})\right|&\leq C\frac{y\cdot n}{\left|y-\tilde{y}\right|^d},\label{estALinPP}\\
\left|\nabla_yP(y,\tilde{y})\right|&\leq C\left(\frac{1}{\left|y-\tilde{y}\right|^d}+\frac{y\cdot n}{\left|y-\tilde{y}\right|^{d+1}}\right)\label{estALinPnabla}.
\end{align}
\end{subequations}
\end{enumerate}
\end{lem}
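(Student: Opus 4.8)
The estimates collected in this lemma go back to Avellaneda and Lin, and I only describe the structure of the argument, the details being carried out in \cite{alin} and \cite{dgvnm2}. The plan is to combine three ingredients: (i) existence together with the \emph{elementary} pointwise size bounds for $G$; (ii) the symmetry relation $G^T(y,\tilde y)=G^*(\tilde y,y)$ linking $G$ to the Green kernel $G^*$ of the (again periodic, elliptic and smooth) transposed operator, so that every bound proved in the variable $y$ transfers to the variable $\tilde y$; and (iii) the uniform, i.e.\ scale-invariant, Lipschitz and $C^{1,\alpha}$ estimates of \cite{alin} for $-\nabla\cdot A(y)\nabla\cdot$, valid in the interior as well as up to a flat portion of $\partial\Omega_n$. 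For (i), existence of $G$ on the half-space is provided by \cite{HofKim07} for $d\ge 3$ and by \cite{DK09} for $d=2$, while the bound $|G(y,\tilde y)|\le C|y-\tilde y|^{2-d}$ for $d\ge 3$ and its logarithmic analogue for $d=2$ follow from the De Giorgi--Nash--Moser theory, for which the ellipticity \textbf{(A1)} alone suffices; this already yields \eqref{estALinGd=2} and \eqref{estALinGd>2}.

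First I would upgrade the size bound to the gradient bound \eqref{estALinnablaGdqcq}: fixing $\tilde y$ and setting $r:=|y-\tilde y|$, the map $z\mapsto G(z,\tilde y)$ solves the homogeneous system, with vanishing boundary data, on a half-ball of radius of order $r$ centred at the orthogonal projection $\bar y$ of $y$ onto $\partial\Omega_n$ (or on an interior ball when $y\cdot n\gtrsim r$); the uniform interior/boundary Lipschitz estimate of \cite{alin} then gives $|\nabla_y G(y,\tilde y)|\le Cr^{-1}\sup|G(\cdot,\tilde y)|\le Cr^{1-d}$ over that ball. Next, to obtain the boundary-decay bounds I would exploit that $G$ vanishes on $\partial\Omega_n$ in \emph{each} variable: when $y\cdot n\lesssim r$, combining the boundary Lipschitz estimate with $G(\cdot,\tilde y)=0$ on $\partial\Omega_n$ gives $|G(y,\tilde y)|\le C(y\cdot n)r^{-1}\sup|G(\cdot,\tilde y)|\le C(y\cdot n)r^{1-d}$, and repeating the argument in the variable $\tilde y$ via the symmetry produces the second factor $\tilde y\cdot n$ and delivers \eqref{estALinGdqcq} (in the ranges where $y\cdot n$ or $\tilde y\cdot n$ exceeds $\sim r$ one pulls out only the factor actually available, and \eqref{estALinGd>2} covers the interior diagonal; altogether one gets $|G|\le C\min\{r^{2-d},(y\cdot n)r^{1-d},(\tilde y\cdot n)r^{1-d},(y\cdot n)(\tilde y\cdot n)r^{-d}\}$). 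Estimate \eqref{estALinnablaGdqcq2} and the mixed bound $|\nabla_y\nabla_{\tilde y}G|\le Cr^{-d}$ come from one further application of the uniform Lipschitz estimate, now to $\nabla_{\tilde y}G(\cdot,\tilde y)$, which is legitimate since the smoothness \textbf{(A3)} propagates to derivatives of $G$ away from the diagonal.

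Finally, the Poisson kernel bounds \eqref{estALinP} follow from the formula \eqref{exprP}: since $P(y,\tilde y)=-[A^*(\tilde y)\nabla_{\tilde y}G^*(\tilde y,y)\cdot n]^T$ with $\tilde y\in\partial\Omega_n$, boundedness of $A$ gives $|P(y,\tilde y)|\le C|\nabla_{\tilde y}G^*(\tilde y,y)|$, and applying the boundary Lipschitz estimate to $z\mapsto G^*(z,y)$ on a half-ball of radius $\sim r$ about $\tilde y$ (it solves the transposed homogeneous system and vanishes on $\partial\Omega_n$) together with the already established $y$-boundary decay $\sup|G^*(\cdot,y)|\le C(y\cdot n)r^{1-d}$ yields $|\nabla_{\tilde y}G^*(\tilde y,y)|\le Cr^{-1}(y\cdot n)r^{1-d}=C(y\cdot n)r^{-d}$, that is \eqref{estALinPP}; one further $y$-derivative, using the uniform $C^{1,\alpha}$ estimate, gives \eqref{estALinPnabla}. \emph{The crux of the whole argument --- and the only place where more than the De Giorgi--Nash--Moser theory enters --- is the uniform Lipschitz (and $C^{1,\alpha}$) estimate up to the boundary of \cite{alin}, whose proof relies decisively on the periodicity \textbf{(A2)} and the smoothness \textbf{(A3)} of $A$; the unboundedness of $\Omega_n$, which would ordinarily obstruct such global Green-kernel bounds, is harmless here precisely because $\Omega_n$ is dilation-invariant, and it is this invariance that makes the estimates scale-free and hence valid at every scale.}
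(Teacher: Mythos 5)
Your skeleton is the one actually used in the references the paper leans on for this lemma (\cite{alin}, and \cite{dgvnm2}, appendix A): scale-invariant Lipschitz and $C^{1,\alpha}$ estimates up to the flat boundary, the vanishing of $G$ on $\partial\Omega_n$ in each variable to gain the factors $y\cdot n$ and $\tilde y\cdot n$, the symmetry $G^T(y,\tilde y)=G^*(\tilde y,y)$ to transfer bounds between the two variables, and formula \eqref{exprP} to convert Green bounds into Poisson bounds; the paper itself does not reprove the lemma but quotes these sources, so in structure you are on the same route.

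There is, however, one step that would fail as you state it: you claim that the zeroth-order bounds \eqref{estALinGd=2}--\eqref{estALinGd>2} follow from De Giorgi--Nash--Moser theory, ``for which the ellipticity \textbf{(A1)} alone suffices''. The lemma is stated for systems ($N\geq 1$ arbitrary), and there is no De Giorgi--Nash--Moser theory for systems: with merely elliptic, non-periodic coefficients the global bound $\left|G(y,\tilde y)\right|\leq C\left|y-\tilde y\right|^{2-d}$ is not available at all scales, which is precisely the point the paper makes by recalling \eqref{estKangKim10} from \cite{KangKim10}, valid only for $|y-\tilde y|<R_{max}$; even the existence result of \cite{HofKim07} presupposes local H\"older estimates for solutions of the system. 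The uniform-in-scale size bounds are themselves a product of the Avellaneda--Lin theory, i.e. they already require periodicity \textbf{(A2)} and regularity \textbf{(A3)} through the uniform local estimates (of the type of theorems \ref{theolocalboundaryueps} and \ref{theolocalboundarygradientueps}) combined with the dilation invariance of $\Omega_n$; in the scalar case $N=1$ your appeal to De Giorgi--Nash--Moser would be legitimate, but not in the generality of the lemma. So the ``crux'' you reserve for the gradient and boundary-decay estimates is in fact needed from the very first step; with that correction, the remainder of your argument (rescaled Lipschitz estimates for \eqref{estALinnablaGdqcq}, the factors $y\cdot n$, $\tilde y\cdot n$ from the homogeneous Dirichlet condition giving \eqref{estALinGdqcq}, mixed derivatives by one more application in the second variable, and \eqref{estALinPP}--\eqref{estALinPnabla} via \eqref{exprP}) is the standard and correct path.
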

By continuity of $G$ and its derivatives, up to the boundary, the estimates on Green's kernel naturally extend to $y\neq\tilde{y}\in\overline{\Omega_n}$. These bounds are of constant use in our work: for a proof in the half-space see \cite{dgvnm2} (appendix A). The key argument is due to Avellaneda and Lin. The large scale description of $G$ boils down to an homogenization problem. In the paper \cite{alin}, the authors face this homogenization problem under the periodicity assumption {\bf (A2)} and manage to get uniform local estimates on $u^\varepsilon=u^\varepsilon(x)$ satisfying
\begin{equation}\label{sysuepsavlin}
\left\{\begin{array}{rll}
-\nabla\cdot A\left(\frac{x}{\varepsilon}\right)\nabla u^\varepsilon&=f,& x\in D(0,1)\\
u^\varepsilon&=g,& x\in \Gamma(0,1)
\end{array}\right. .
\end{equation}
We recall the two local estimates useful in the sequel:
\begin{theo}[local boundary estimate, \cite{alin} lemma $12$]\label{theolocalboundaryueps}
Let $\mu,\ \delta$ be positive real numbers, $\mu<1$, $F\in L^{d+\delta}\left(D(0,1)\right)$ and $g\in C^{0,1}\left(\Gamma(0,1)\right)$. Assume that $f=\nabla\cdot F$.\\
There exists $C>0$ such that for all $\varepsilon>0$, if $u^\varepsilon$ belongs to $L^2\left(D(0,1)\right)$ and satisfies \eqref{sysuepsavlin}, then
\begin{equation}\label{boundaryest}
\left\|u^\varepsilon\right\|_{C^{0,\mu}\left(D\left(0,\frac{1}{2}\right)\right)}\leq C\left[\left\|u^\varepsilon\right\|_{L^2\left(D(0,1)\right)}+\left\|F\right\|_{L^{d+\delta}\left(D(0,1)\right)}+\|g\|_{C^{0,1}\left(\Gamma(0,1)\right)}\right].
\end{equation}
\end{theo}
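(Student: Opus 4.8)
The plan is to prove \eqref{boundaryest} by the \emph{compactness method} of Avellaneda and Lin. Dividing $u^\varepsilon$, $F$ and $g$ by $\Theta:=\|u^\varepsilon\|_{L^2(D(0,1))}+\|F\|_{L^{d+\delta}(D(0,1))}+\|g\|_{C^{0,1}(\Gamma(0,1))}$, we reduce to the case $\Theta\leq 1$; translating along the flat boundary $\partial\Omega_n$ and subtracting the constant $g(x_0)$, we may also assume that the point under consideration is $0\in\partial\Omega_n$ and that $g(0)=0$, whence $u^\varepsilon(0)=0$. The target is a Campanato-type decay, \emph{uniform in} $\varepsilon>0$: there is $C>0$ such that
\begin{equation*}
\left(\fint_{D(x_0,r)}\bigl|u^\varepsilon-u^\varepsilon(x_0)\bigr|^2\,dy\right)^{1/2}\leq C\,r^\mu\qquad\text{for all }x_0\in\overline{D(0,1/2)},\ \ 0<r<1/4.
\end{equation*}
Combined with the (simpler, purely interior) analogue for balls centred inside $\Omega_n$, Campanato's characterisation of Hölder spaces yields $\|u^\varepsilon\|_{C^{0,\mu}(D(0,1/2))}\leq C$, which is \eqref{boundaryest}. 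The Hölder exponent is of course constrained by the integrability of $F$: one should read the statement with $\mu$ and $\delta$ compatible, say $\mu<\delta/(d+\delta)$, since the contribution of $F$ solves a constant-coefficient equation and is only $C^{0,\delta/(d+\delta)}$.

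The core is a one-step improvement lemma: there exist $\theta\in(0,1/4)$, $\varepsilon_0>0$ and $\eta>0$, depending only on $d,N,\lambda,\mu,\delta$, such that whenever $0<\varepsilon<\varepsilon_0$, $u^\varepsilon$ solves \eqref{sysuepsavlin} with $f=\nabla\cdot F$, $\fint_{D(0,1)}|u^\varepsilon|^2\leq 1$, $\|F\|_{L^{d+\delta}(D(0,1))}\leq\eta$, $\|g\|_{C^{0,1}(\Gamma(0,1))}\leq\eta$ and $g(0)=0$, then $\fint_{D(0,\theta)}|u^\varepsilon|^2\leq\theta^{2\mu}$. This is proved by contradiction: if it fails there are $\varepsilon_k\to 0$ and solutions $u^k$ with data $F_k\to 0$ in $L^{d+\delta}$, $g_k\to 0$ in $C^{0,1}$, satisfying $\fint_{D(0,1)}|u^k|^2\leq 1$ but $\fint_{D(0,\theta)}|u^k|^2>\theta^{2\mu}$. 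Subtracting a Lipschitz extension of $g_k$ and applying the Caccioppoli inequality, $(u^k)$ is bounded in $H^1(D(0,3/4))$; along a subsequence $u^k\rightharpoonup u$ weakly in $H^1$ and strongly in $L^2$, while $A(\cdot/\varepsilon_k)$ $H$-converges to the constant homogenised tensor $A^0$. Passing to the limit in the weak formulation (using $H$-convergence for the oscillating flux and the uniform convergence of $g_k$ for the boundary condition), $u$ solves $-\nabla\cdot A^0\nabla u=0$ in $D(0,3/4)$, $u=0$ on $\Gamma(0,3/4)$, with $\fint_{D(0,1)}|u|^2\leq 1$. Odd reflection across the flat boundary and interior regularity for the constant-coefficient operator make $u$ smooth near $0$, with $u(0)=0$ and vanishing tangential gradient, so $\fint_{D(0,r)}|u|^2\leq C_0 r^2$ for $r\leq 1/2$. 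Choosing $\theta$ small enough that $C_0\theta^2\leq\tfrac12\theta^{2\mu}$ and using the strong $L^2$ convergence contradicts $\fint_{D(0,\theta)}|u^k|^2>\theta^{2\mu}$ for $k$ large. The interior improvement lemma is identical, with the boundary term dropped.

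Estimate \eqref{boundaryest} then follows by iterating the improvement lemma across scales. The rescaling $y\mapsto\theta y$ turns \eqref{sysuepsavlin} on $D(0,\theta)$ into the same system on $D(0,1)$ with $\varepsilon$ replaced by $\varepsilon/\theta$ and with rescaled data; dividing the rescaled $u^\varepsilon$, $F$, $g$ by the decay factor $\theta^\mu$, the $C^{0,1}$ norm of $g$ picks up a factor $\theta^{1-\mu}$ and the $L^{d+\delta}$ norm of $F$ a factor $\theta^{1-\mu-d/(d+\delta)}$, both positive powers once $\mu<\delta/(d+\delta)$, so the smallness hypotheses are preserved. One applies the lemma repeatedly, shrinking the radius by $\theta$ at each step, as long as the current scaled value of $\varepsilon$ stays below $\varepsilon_0$, i.e. down to radii $r\gtrsim\varepsilon$. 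For $r\lesssim\varepsilon$ one instead rescales $D(0,r)$ to unit size with $r\sim\varepsilon$: the coefficient field becomes $A(\,\cdot+\mathrm{const})$, which is \emph{smooth with bounds independent of} $\varepsilon$, so the classical boundary Schauder estimate for this operator on the flat portion $\Gamma$ supplies $C^{0,\mu}$ control (any $\mu<1$) at scales $r\lesssim\varepsilon$. Patching the two regimes at the transition scale $r\sim\varepsilon$ gives the uniform Campanato decay for every $0<r<1/4$, hence \eqref{boundaryest}.

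The main obstacle is the compactness step: one must know that the $H$-limit of the family of Dirichlet problems for $A(\cdot/\varepsilon_k)$ on $D(0,3/4)$ is the Dirichlet problem for the constant tensor $A^0$, which requires care with the boundary term and with the fact that only the tangential part of the oscillating flux converges weakly; this is standard once one inserts the oscillating test functions built from the cell correctors $\chi$ and invokes the div--curl lemma. The secondary delicate point is to keep all constants uniform as $\varepsilon\to 0$: the constants $\theta,\varepsilon_0,\eta$ of the improvement lemma are genuinely uniform because they come from a compactness argument, and the Schauder constant used below scale $\varepsilon$ is uniform precisely because, after dilation by $\varepsilon$, the coefficient field is $A(\cdot)$ itself, whose smoothness bounds do not depend on $\varepsilon$ — so the two Campanato-type estimates match cleanly at $r\sim\varepsilon$.
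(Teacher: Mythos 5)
The paper offers no proof of this statement: it is quoted directly from Avellaneda and Lin (lemma $12$ of \cite{alin}), and your three-step compactness-method argument (one-step improvement lemma proved by contradiction and $H$-convergence, iteration of the Campanato decay down to scales $r\gtrsim\varepsilon$, then classical Schauder estimates for the fixed smooth coefficient $A$ after blow-up at scales $r\lesssim\varepsilon$) is precisely the argument of that reference, so your proposal is correct and follows essentially the intended proof. Your remark that the H\"older exponent must be compatible with the integrability of $F$, i.e. $\mu\leq\delta/(d+\delta)$ rather than an arbitrary $\mu<1$, is also a correct reading of the statement as quoted.
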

\begin{theo}[local boundary gradient estimate, \cite{alin} lemma $20$]\label{theolocalboundarygradientueps}
Let $\nu,\ \delta$ be positive real numbers, $\nu<1$, $f\in L^{d+\delta}\left(D(0,1)\right)$ and $g\in C^{1,\nu}\left(\Gamma(0,1)\right)$.\\
There exists $C>0$ such that for all $\varepsilon>0$, if $u^\varepsilon$ belongs to $L^\infty\left(D(0,1)\right)$ and satisfies \eqref{sysuepsavlin}, then
\begin{equation}\label{boundarygradest}
\left\|\nabla u^\varepsilon\right\|_{L^\infty\left(D\left(0,\frac{1}{2}\right)\right)}\leq C\left[\left\|u^\varepsilon\right\|_{L^\infty\left(D(0,1)\right)}+\left\|f\right\|_{L^{d+\delta}\left(D(0,1)\right)}+\|g\|_{C^{1,\nu}\left(\Gamma(0,1)\right)}\right].
\end{equation}
\end{theo}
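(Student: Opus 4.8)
This is the classical uniform-in-$\varepsilon$ boundary Lipschitz estimate of Avellaneda and Lin, and I would prove it by the compactness (blow-up) method, using the $C^{0,\mu}$ bound of Theorem~\ref{theolocalboundaryueps} for the uniform compactness it provides at every scale of the iteration. By a dilation and by linearity it is enough to treat the case $\|u^\varepsilon\|_{L^\infty(D(0,1))}+\|f\|_{L^{d+\delta}(D(0,1))}+\|g\|_{C^{1,\nu}(\Gamma(0,1))}\le 1$; subtracting from $u^\varepsilon$ the affine function matching the first-order jet of $g$ at the origin, one may moreover assume $g(0)=0$, $\nabla_{\mathrm{tan}}g(0)=0$, so that $|g(\tilde y)|\le|\tilde y|^{1+\nu}$ on $\Gamma(0,1)$, at the price of adding to the equation a right-hand side in divergence form with bounded density (the divergence of $A(\tfrac{\cdot}{\varepsilon})b$ with $|b|\le 1$), which does not affect the argument below and in particular passes to zero in the homogenisation limit. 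Finally, when $\varepsilon\ge\varepsilon_0$ for a fixed $\varepsilon_0>0$, the coefficients $A(\tfrac{\cdot}{\varepsilon})$ are uniformly H\"older on $D(0,1)$ and the conclusion is the classical boundary Schauder estimate on the flat half-ball; so I may assume $0<\varepsilon<\varepsilon_0$.

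\noindent\emph{One-step boundary improvement.} The heart of the matter is the following dichotomy: there exist $\theta\in(0,\tfrac18)$, $\varepsilon_1>0$, $C_0>0$ and $\nu'\in(0,\nu)$, depending only on $d,N,\lambda,\nu,\|A\|_{C^{0,\nu}}$, such that if $0<\varepsilon<\varepsilon_1$ and $u^\varepsilon$ solves \eqref{sysuepsavlin} on $D(0,1)$ with data normalised and reduced as above, then there is $c\in\mathbb R^N$, $|c|\le C_0$, with
\[
\big\|u^\varepsilon-c\,(x\cdot n)\big\|_{L^\infty(D(0,\theta))}\le\tfrac12\,\theta^{1+\nu'}.
\]
I would establish this by contradiction. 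If it fails there are $\varepsilon_k\downarrow 0$, data $(f_k,g_k)$ satisfying the normalisation and solutions $u^{\varepsilon_k}$ for which no admissible $c$ works. By Caccioppoli up to the flat boundary together with Theorem~\ref{theolocalboundaryueps}, $(u^{\varepsilon_k})$ is bounded in $H^1\cap C^{0,\mu}$ on a slightly smaller half-ball; passing to a subsequence, $u^{\varepsilon_k}\rightharpoonup u^0$ weakly in $H^1$ and uniformly, $f_k\rightharpoonup f^0$ in $L^{d+\delta}$, $g_k\to g^0$ in $C^1(\Gamma(0,1))$. The classical homogenisation argument (Tartar's oscillating test functions built from the cell correctors $\chi^\beta$ for the interior identification, together with the flatness of $\Gamma(0,1)$ and the uniform convergence of traces) then shows that $u^0$ solves the constant-coefficient Dirichlet problem $-\nabla\cdot A^0\nabla u^0=f^0$ in $D(0,1)$, $u^0=g^0$ on $\Gamma(0,1)$, with $\|f^0\|_{L^{d+\delta}}\le 1$ and $|g^0|\le|\,\cdot\,|^{1+\nu}$. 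By classical boundary regularity for $A^0$, $u^0\in C^{1,\mu}$ near the origin with an absolute norm bound; since $g^0$ and its tangential gradient vanish at $0$, the first-order Taylor polynomial of $u^0$ at $0$ is precisely $c^0(x\cdot n)$ for some $|c^0|\le C_0$, and $\|u^0-c^0(x\cdot n)\|_{L^\infty(D(0,\theta))}\le C_1\theta^{1+\mu}$. Choosing $\theta$ so small that $C_1\theta^{1+\mu}\le\tfrac14\theta^{1+\nu'}$, and then $k$ large enough, forces $u^{\varepsilon_k}$ to be within $\tfrac12\theta^{1+\nu'}$ of $c^0(x\cdot n)$ on $D(0,\theta)$, contradicting the choice of the sequence.

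\noindent\emph{Iteration and matching with the fine scale.} Rescaling $D(0,\theta^j)\to D(0,1)$ at each step replaces $\varepsilon$ by $\varepsilon/\theta^j$, which stays below $\varepsilon_1$ as long as $\theta^j\gtrsim\varepsilon$; so the one-step improvement can be iterated down to a scale comparable with $\varepsilon$. A geometric summation then produces, for every dyadic $r\in[\varepsilon,1]$, a slope $\ell_r\in\mathbb R^N$ with $|\ell_r|\le C$, $|\ell_{\theta r}-\ell_r|\le C r^{\nu'}$ and $\|u^\varepsilon-\ell_r\,(x\cdot n)\|_{L^\infty(D(0,r))}\le C r^{1+\nu'}$; letting $r\downarrow\varepsilon$ one obtains a limiting slope $\ell_*$ with $\|u^\varepsilon-\ell_*\,(x\cdot n)\|_{L^\infty(D(0,r))}\le Cr^{1+\nu'}$ for all such $r$. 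For scales below $\varepsilon$ I would use the smoothness of $A$: translating the relevant boundary point to $0$ and setting $v(x):=u^\varepsilon(\varepsilon x)$, the function $v$ solves $-\nabla\cdot A(x)\nabla v=\varepsilon^2 f(\varepsilon x)$ on $D(0,1)$ with smooth coefficients (assumption \textbf{(A3)}), so interior and boundary Schauder give a bound on $\|\nabla v\|_{L^\infty(D(0,1/2))}$, i.e. on $\|\nabla u^\varepsilon\|_{L^\infty(D(0,\varepsilon/2))}$, matching the iteration bound at scale $\varepsilon$. Running the whole argument at every boundary point of $\Gamma(0,1/2)$, together with the analogous interior one-step improvement at interior points, and covering $\overline{D(0,1/2)}$ by finitely many such half-balls and balls, yields the asserted estimate
\[
\|\nabla u^\varepsilon\|_{L^\infty(D(0,1/2))}\le C\big(\|u^\varepsilon\|_{L^\infty(D(0,1))}+\|f\|_{L^{d+\delta}(D(0,1))}+\|g\|_{C^{1,\nu}(\Gamma(0,1))}\big).
\]

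\noindent\emph{Main obstacle.} The delicate point is the compactness identification: that a family of solutions bounded in $L^\infty$ (hence, by Caccioppoli, in $H^1_{\mathrm{loc}}$) with $\varepsilon_k\to 0$ subconverges to a solution of the homogenised Dirichlet problem on the half-ball, uniformly in $k$ --- in particular passing to the limit in the product $A(\tfrac{\cdot}{\varepsilon_k})\nabla u^{\varepsilon_k}$ by compensated compactness and controlling the flat boundary trace --- and the careful bookkeeping that keeps all constants independent of $\varepsilon$ throughout the dyadic iteration and its matching with the sub-$\varepsilon$ Schauder estimate. The remaining ingredients --- classical boundary Schauder for $A^0$, Caccioppoli up to a flat boundary, the interior one-step improvement, and the covering argument --- are routine.
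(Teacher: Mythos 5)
The paper does not actually prove this statement: it is quoted verbatim from Avellaneda and Lin (\cite{alin}, lemma $20$), so the only benchmark is the original compactness proof, and your proposal is indeed an attempt to reconstruct exactly that three-scale scheme (one-step improvement by blow-up, dyadic iteration down to scale $\varepsilon$, classical Schauder below scale $\varepsilon$). The overall architecture, the use of theorem \ref{theolocalboundaryueps} for compactness, the normalisation of the data, and the sub-$\varepsilon$ matching are all in the spirit of \cite{alin}.

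There is, however, a genuine gap at the iteration step. Your one-step improvement compares $u^\varepsilon$ with the plain linear profile $c\,(x\cdot n)$, and you then claim that rescaling $D(0,\theta^j)\rightarrow D(0,1)$ merely replaces $\varepsilon$ by $\varepsilon/\theta^j$, so the lemma can be reapplied. But after subtracting $c\,(x\cdot n)$ the function no longer solves a problem of the admissible, normalised form: since $A\left(\frac{x}{\varepsilon}\right)$ oscillates, one has $-\nabla\cdot A\left(\frac{x}{\varepsilon}\right)\nabla\left[c\,(x\cdot n)\right]=-\nabla\cdot\left[A\left(\frac{x}{\varepsilon}\right)c\otimes n\right]\neq 0$, a divergence-form source whose density is of size $|c|$, and after the renormalisation by $\theta^{-j(1+\nu')}$ at step $j$ it acquires the size $\theta^{-j\nu'}|c_j|$, which blows up along the iteration. (For constant coefficients this term vanishes identically, which is why the scheme you describe is the correct one for $A^0$ but not for $A\left(\frac{x}{\varepsilon}\right)$; the same mechanism is hidden in your reduction of $g$ to vanishing jet, harmless once but not when the subtracted slope must be updated at every scale.) This is precisely the difficulty that forces Avellaneda and Lin to compare, at each scale, with exact solutions of the $\varepsilon$-problem rather than with linear functions: namely with the Dirichlet correctors $\Phi_{\varepsilon,j}$ solving $-\nabla\cdot A\left(\frac{x}{\varepsilon}\right)\nabla\Phi_{\varepsilon,j}=0$ with boundary data $x_j$ (equivalently, with corrector-adjusted profiles whose boundary discrepancy must then be controlled), and a nontrivial part of their proof consists in obtaining uniform bounds for these correctors intertwined with the induction. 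Without this ingredient the dyadic iteration, and hence the passage from the one-step lemma to the Lipschitz bound, does not close; the rest of your outline (compactness identification of the homogenised limit, sub-$\varepsilon$ Schauder, covering) is sound.
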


\subsection{Integral representation formula}
\label{secintreprfor}

We aim at showing that the solution $v_{bl}$ of \eqref{sysbl} can be represented in terms of an integral formula involving Poisson's kernel. One of the main difficulties arises from the fact that the Dirichlet data $v_0$ of \eqref{sysbl} is not compactly supported in the boundary. The function $v=v(z)\in\mathbb R^N$, such that for all $z\in\mathbb R^{d-1}\times\mathbb R^*_+$, $v(z):=v_{bl}(\mathrm{M}z)$, solves \eqref{sysblz}. Let us now recall what we consider as a solution of \eqref{sysblz}. In fact, we have two kinds of solutions. The first corresponds to the variational construction in \cite{dgvnm}: see section \ref{subsecdgvnmdiv}, in particular theorem \ref{theodecaydiv}. Poisson's kernel $P=P(y,\tilde{y})\in M_N(\mathbb R)$ associated to the domain $\Omega_n$ and the operator $-\nabla\cdot A(y)\nabla\cdot$ makes it possible to define a second function $w_{bl}=w_{bl}(y)\in\mathbb R^N$ solving \eqref{sysbl}. For all $y\in\Omega_n$, we define $w_{bl}(y)$ by
\begin{equation*}
w_{bl}(y)=\int_{\tilde{y}\cdot n=0}P(y,\tilde{y})v_0(\tilde{y})d\tilde{y}
\end{equation*}
and $w$ by for all $z\in\mathbb R^{d-1}\times]0,\infty[$, 
\begin{equation*}
w(z):=w_{bl}(\mathrm{M}z)=\int_{\partial\Omega_n}P(\mathrm{M}z,\tilde{y})v_0(\tilde{y})d\tilde{y}=\int_{\tilde{z}_d=0}P(\mathrm{M}z,\mathrm{M}\tilde{z})v_0(\mathrm{M}\tilde{z})d\tilde{z}.
\end{equation*}
\begin{prop}\label{solreprgreen}
The function $w$ belongs to $C^\infty\left(\mathbb R^{d-1}\times[0,\infty[\right)$ and satisfies \eqref{sysblz}. Furthermore, 
\begin{subequations}
\begin{align}
&\left\|\nabla_zw\right\|_{L^\infty\left(\mathbb R^{d-1}\times]t,\infty[\right)}\stackrel{t\rightarrow\infty}{\longrightarrow}0,\label{nablawtends0}\\
&\partial_{z_d} w\in L^2_{z_d}\left(\mathbb R^*_+,L^\infty_{z'}\bigl(\mathbb R^{d-1}\bigr)\right)\label{partial_z_dw}.
\end{align}
\end{subequations}
\end{prop}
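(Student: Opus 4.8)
The plan is to read off every assertion of Proposition~\ref{solreprgreen} from the pointwise bounds \eqref{estALinP} on Poisson's kernel, combined with interior and boundary elliptic regularity for the smooth operator $-\nabla\cdot A\nabla\cdot$. First, for the well-posedness of the formula, write $y=y'+(y\cdot n)n$ with $y'\in\partial\Omega_n$, so that $|y-\tilde y|^2=|y'-\tilde y|^2+(y\cdot n)^2$ for $\tilde y\in\partial\Omega_n$; the change of variables $\tilde y=y'+(y\cdot n)s$, $s\in\mathbb R^{d-1}$, turns $\int_{\partial\Omega_n}\frac{y\cdot n}{|y-\tilde y|^d}\,d\tilde y$ into $\int_{\mathbb R^{d-1}}(1+|s|^2)^{-d/2}\,ds$, a finite constant independent of $y$. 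Together with \eqref{estALinPP} and the fact that $v_0\in L^\infty(\mathbb R^d)$ (from {\bf (B1)}--{\bf (B2)}), this shows that the integral defining $w_{bl}$ converges absolutely and $\|w_{bl}\|_{L^\infty(\Omega_n)}\leq C\|v_0\|_{L^\infty}$, hence the same for $w$ after the rotation by $\mathrm M$.

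Next, for a fixed $y$ with $y\cdot n>0$, the map $\tilde y\mapsto\nabla_y P(y,\tilde y)$ lies in $L^1(\partial\Omega_n)$ by \eqref{estALinPnabla} and the analogous computation (both terms integrate to $\leq C(y\cdot n)^{-1}$), which legitimates differentiation under the integral sign; iterating, or simply invoking interior elliptic regularity for the smooth operator, gives $w_{bl}\in C^\infty(\Omega_n)$, and since $P(\cdot,\tilde y)$ solves the homogeneous system in $\Omega_n$ for each $\tilde y\in\partial\Omega_n$ (being built from $G(\cdot,\tilde y)$), $w_{bl}$ solves $-\nabla\cdot A\nabla w_{bl}=0$ in $\Omega_n$, i.e. the interior equation in \eqref{sysblz}. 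For the boundary condition I would use the normalization $\int_{\partial\Omega_n}P(y,\tilde y)\,d\tilde y=\Idd_N$ for $y\in\Omega_n$ (a standard property of the Poisson kernel, cf. \cite{alin,dgvnm2}) to write, for $y_0\in\partial\Omega_n$, $w_{bl}(y)-v_0(y_0)=\int_{\partial\Omega_n}P(y,\tilde y)\bigl(v_0(\tilde y)-v_0(y_0)\bigr)\,d\tilde y$; splitting over $\{|\tilde y-y_0|<r\}$ and its complement, the first piece is $\leq C\sup_{B(y_0,r)}|v_0-v_0(y_0)|$ by the uniform $L^1$-bound above, and the second, for $|y-y_0|<r/2$, is $\leq C\,(y\cdot n)\int_{|\tilde y-y_0|\geq r}|\tilde y-y_0|^{-d}\,d\tilde y\leq C(y\cdot n)/r\to 0$ as $y\to y_0$. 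Hence $w_{bl}$ extends continuously to $\overline{\Omega_n}$ with $w_{bl}=v_0$ on $\partial\Omega_n$; rotating back, $w$ is a bounded solution of a second order elliptic system with $C^\infty$ coefficients on $\mathbb R^{d-1}\times[0,\infty[$ with $C^\infty$ data $v_0(\mathrm M\,\cdot)$, so boundary Schauder estimates (bootstrapped, see \cite{adn2}, or the local estimates of Theorems~\ref{theolocalboundaryueps}--\ref{theolocalboundarygradientueps} applied with $\varepsilon=1$) upgrade this to $w\in C^\infty(\mathbb R^{d-1}\times[0,\infty[)$, so that \eqref{sysblz} holds.

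The decay statements then follow from the same kernel bounds. From \eqref{estALinPnabla}, $v_0\in L^\infty$ and the integral computations above, $|\nabla_y w_{bl}(y)|\leq C\|v_0\|_{L^\infty}(y\cdot n)^{-1}$; since $\mathrm M$ is orthogonal and $(\mathrm Mz)\cdot n=z_d$, this gives $|\nabla_z w(z)|\leq C/z_d$, whence $\|\nabla_z w\|_{L^\infty(\mathbb R^{d-1}\times]t,\infty[)}\leq C/t\to 0$, which is \eqref{nablawtends0}. For \eqref{partial_z_dw}, this same bound yields $\|\partial_{z_d}w(\cdot,z_d)\|_{L^\infty}^2\leq C/z_d^2$ for $z_d\geq 1$, while for $0<z_d<1$ the boundary gradient estimate (Theorem~\ref{theolocalboundarygradientueps} with $\varepsilon=1$, applied at each boundary point and using that $\|w_{bl}\|_{L^\infty}$ and $\|v_0\|_{C^{1,\nu}}$ on unit balls are bounded uniformly by periodicity of $A$ and $v_0$) gives $\|\nabla_z w(\cdot,z_d)\|_{L^\infty}\leq C$; integrating, $\int_0^\infty\|\partial_{z_d}w(\cdot,z_d)\|_{L^\infty}^2\,dz_d\leq C+C\int_1^\infty z_d^{-2}\,dz_d<\infty$.

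The main obstacle is the boundary condition: one must handle the fact that $v_0$ is merely bounded, not compactly supported on $\partial\Omega_n$, and one needs the normalization $\int_{\partial\Omega_n}P(y,\cdot)=\Idd_N$ together with the upgrade from continuity up to the boundary to full $C^\infty$ regularity there. Once boundary regularity is in hand, the decay estimates \eqref{nablawtends0}--\eqref{partial_z_dw} are essentially a bookkeeping of the kernel bounds \eqref{estALinP}.
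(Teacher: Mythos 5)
Your proposal is correct and follows essentially the same route as the paper: the decay statements \eqref{nablawtends0}--\eqref{partial_z_dw} are read off the kernel bound \eqref{estALinPnabla} through the same change of variables giving $\left|\nabla_z w(z)\right|\leq C/z_d$, which is exactly the paper's sketch, while the smoothness and the verification of \eqref{sysblz} are standard facts the paper only asserts. You are in fact a bit more careful on two points left implicit there: the boundary value via the normalization $\int_{\partial\Omega_n}P(y,\tilde y)\,d\tilde y=\Idd_N$, and the square-integrability of $\partial_{z_d}w$ near $z_d=0$ (the $C/z_d$ bound alone is not square-integrable at $0$, and your use of the boundary gradient estimate for $0<z_d<1$ supplies the missing uniform bound).
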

Let us give a sketch of how to deduce these properties from the bound on $\nabla_yP$ given in \eqref{estALinP}. For all $z\in\mathbb R^{d-1}\times]0,\infty[$, 
\begin{align*}
\left|\nabla_zw(z)\right|&\leq\int_{\tilde{z}_d=0}\left|\mathrm{M}^T\nabla_yP(\mathrm{M}z,\mathrm{M}\tilde{z})v_0(\mathrm{M}\tilde{z})\right|d\tilde{z}\\
&\leq C\int_{\tilde{z}_d=0}\left(\frac{1}{|z-\tilde{z}|^d}+\frac{z_d}{|z-\tilde{z}|^{d+1}}\right)d\tilde{z}\\
&\leq C\int_{\mathbb R^{d-1}}\left(\frac{1}{\left(z_d^2+|z'-\tilde{z}'|^2\right)^\frac{d}{2}}+\frac{z_d}{\left(z_d^2+|z'-\tilde{z}'|^2\right)^{\frac{d+1}{2}}}\right)d\tilde{z}'\\
&\leq C\frac{1}{z_d}\int_{\mathbb R^{d-1}}\left(\frac{1}{\left(1+|z'-\tilde{z}'|^2\right)^\frac{d}{2}}+\frac{1}{\left(1+|z'-\tilde{z}'|^2\right)^\frac{d+1}{2}}\right)d\tilde{z}',
\end{align*}
from which one gets \eqref{nablawtends0} as well as \eqref{partial_z_dw}.

Our goal is now to show that the variational solution and the Poisson solution coincide. 
\begin{theo}\label{theov=w}
We have $v=w$.
\end{theo}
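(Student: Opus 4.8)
The plan is to establish uniqueness for \eqref{sysbl} in a class larger than the quasiperiodic one of \cite{dgvnm} — namely among smooth solutions whose gradient tends to $0$ at infinity — and then to apply it to $u:=v_{bl}-w_{bl}$. Since $v(z)=v_{bl}(\mathrm Mz)$ and $w(z)=w_{bl}(\mathrm Mz)$, showing $u\equiv0$ on $\Omega_n$ is exactly the assertion $v=w$. As $v_{bl}$ and $w_{bl}$ solve \eqref{sysbl} with the \emph{same} datum $v_0$, the function $u$ belongs to $C^\infty(\overline{\Omega_n})$, solves $-\nabla\cdot A(y)\nabla u=0$ in $\Omega_n$, and vanishes on $\partial\Omega_n$; from \eqref{nablavtends0} and \eqref{nablawtends0} (read back in the $y$ variables) we get $\overline M(t):=\|\nabla u\|_{L^\infty(\{y\cdot n>t\})}\to0$ as $t\to\infty$, and $\nabla u\in L^\infty(\Omega_n)$ (via Proposition \ref{solreprgreen}, \eqref{nablavtends0} and the boundary gradient estimate, Theorem \ref{theolocalboundarygradientueps}). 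Two consequences will be used: $|\nabla u(y)|\le\overline M(y\cdot n)$ for all $y\in\Omega_n$, by continuity, and, integrating $\partial_nu$ along the normal segment from $y$ down to $\partial\Omega_n$ where $u=0$,
\begin{equation*}
|u(y)|\le\int_0^{y\cdot n}\overline M(s)\,ds=:\mathcal M(y\cdot n),\qquad\mathcal M(h)=o(h)\ \text{ as }\ h\to\infty .
\end{equation*}

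The core is a Green representation formula. Fix $y_0\in\Omega_n$ and let $g^{*}:=G^{*}(\cdot,y_0)\in M_N(\mathbb R)$ be the adjoint Green kernel on $\Omega_n$ with pole at $y_0$: it solves $-\nabla_y\cdot A^{*}(y)\nabla_y g^{*}=\delta(\cdot-y_0)\,\Idd_N$ in $\Omega_n$, vanishes on $\partial\Omega_n$, and satisfies the bounds \eqref{estALinGdqcq}, \eqref{estALinnablaGdqcq2} (which hold for $G^{*}$ as well, $A^{*}$ being elliptic, periodic and smooth); in particular $|g^{*}(y,\tilde y)|\le C(y\cdot n)(\tilde y\cdot n)|y-\tilde y|^{-d}$ and $|\nabla_y g^{*}(y,\tilde y)|\le C(\tilde y\cdot n)\bigl(|y-\tilde y|^{-d}+(y\cdot n)|y-\tilde y|^{-d-1}\bigr)$. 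For $R>2|y_0|$ set $D_R:=\Omega_n\cap B(0,R)$ and apply the second Green identity for the pair $(-\nabla\cdot A\nabla\cdot,\,-\nabla\cdot A^{*}\nabla\cdot)$ on $D_R$; excising a small ball around $y_0$ and letting its radius go to $0$, the bulk contribution collapses to $u(y_0)$ because $-\nabla\cdot A\nabla u=0$ and $g^{*}$ is the adjoint fundamental solution at $y_0$, whereas on the flat part $\partial\Omega_n\cap B(0,R)$ the boundary integrand is zero since \emph{both} $u$ and $g^{*}$ vanish there. Thus
\begin{equation*}
u(y_0)=\int_{\Omega_n\cap\partial B(0,R)}\Bigl[\,g^{*}\bigl(A\nabla u\cdot\nu\bigr)-u\bigl(A^{*}\nabla g^{*}\cdot\nu\bigr)\Bigr]\,dS ,
\end{equation*}
and everything reduces to letting $R\to\infty$ in this surface integral.

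This last step is the only delicate point, and it is where the decay of $\nabla u$ is indispensable. On $\{|y|=R\}$, $R>2|y_0|$, we have $|y-y_0|\ge R/2$ and $0<y\cdot n\le R$, so the bounds above give $|g^{*}(y,y_0)|\le C_{y_0}(y\cdot n)R^{-d}$ and $|\nabla_y g^{*}(y,y_0)|\le C_{y_0}R^{-d}$, while $|\Omega_n\cap\partial B(0,R)|\le C_dR^{d-1}$. The term with $u$ and $\nabla g^{*}$ is then, using $|u(y)|\le\mathcal M(y\cdot n)\le\mathcal M(R)$, bounded by $C_{y_0}\mathcal M(R)R^{-d}R^{d-1}=C_{y_0}\mathcal M(R)/R\to0$. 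In the term with $g^{*}$ and $\nabla u$ the integrand is $\le C_{y_0}R^{-d}\sup_{0\le s\le R}\bigl(s\,\overline M(s)\bigr)$, and splitting the supremum at $s=\sqrt R$ (for $s\le\sqrt R$, $s\overline M(s)\le\sqrt R\,\overline M(0^{+})$; for $s\ge\sqrt R$, $s\overline M(s)\le R\,\overline M(\sqrt R)$) shows it is $o(R)$, so this term is $o(1)$ too. Letting $R\to\infty$ gives $u(y_0)=0$; as $y_0$ is arbitrary, $u\equiv0$, i.e. $v_{bl}=w_{bl}$, hence $v=w$. The subtlety to stress is that a crude estimate of the surface integral is merely $O(1)$: one recovers the required smallness only by using the decay of $\nabla u$ away from the boundary both directly (to bound $\nabla u$ on the far sphere) and, through the homogeneous boundary condition, indirectly to force $|u(y)|=o(y\cdot n)$ there.
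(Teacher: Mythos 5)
Your argument is correct, and at bottom it is the same duality mechanism as the paper's proof: both pair $u:=v-w$ against the adjoint Green kernel of $\Omega_n$, control that kernel far from the pole by the Avellaneda--Lin bounds \eqref{estALinGdqcq}, \eqref{estALinnablaGdqcq}--\eqref{estALinnablaGdqcq2}, and kill the boundary terms at infinity by combining the decay of $\nabla u$ with a sublinear growth bound on $u$ forced by the homogeneous Dirichlet condition. The execution differs in three places, all legitimate. First, the paper tests against an arbitrary $f\in C^\infty_c$ and works with the auxiliary adjoint solution $U$ given by Green's formula, integrating by parts twice over large boxes, whereas you write the representation directly at the pole $y_0$ over half-balls; this forces you to excise a small ball around $y_0$ and invoke the local fundamental-solution behaviour of $G^{*}$ (standard for smooth coefficients, but an extra step the paper avoids by mollifying with $f$). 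Second, the paper bounds $u$ by $Cz_d^{1/2}$ via the square-integrability of the normal derivative \eqref{partial_z_dv}, \eqref{partial_z_dw} (its estimate \eqref{estcroissuLinfty}), while you use only $\overline M(t)=\|\nabla u\|_{L^\infty(\{y\cdot n>t\})}\rightarrow 0$ to get $|u(y)|\le\int_0^{y\cdot n}\overline M(s)\,ds=o(y\cdot n)$, and you replace the paper's dominated-convergence step by the sup-splitting at $\sqrt R$; both are adequate since your kernel bounds on the sphere carry the extra factor $R^{-d}$. Third, your route does not use \eqref{partial_z_dv}--\eqref{partial_z_dw} at all, but in exchange it needs $\overline M(0)=\|\nabla u\|_{L^\infty(\Omega_n)}<\infty$, i.e. a uniform gradient bound for $v$ and $w$ up to $\partial\Omega_n$; you correctly source this from the uniform local boundary gradient estimate (theorem \ref{theolocalboundarygradientueps}) together with the $L^\infty$ bounds on $v$ (quasiperiodicity) and $w$ (via \eqref{estALinPP} and proposition \ref{solreprgreen}), which is exactly the ingredient the paper's choice of growth bound lets it sidestep. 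With those points noted, the limit $R\rightarrow\infty$ is justified and the conclusion $u\equiv 0$, hence $v=w$, stands.
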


We work on the difference $u:=v-w$, which is a $C^\infty$ solution of
\begin{equation*}
\left\{\begin{array}{rll}
-\nabla\cdot B(\mathrm{M}z)\nabla u&=0,&z_d>0\\
u&=0,&z_d=0
\end{array}\right. .
\end{equation*}
We intend to show that $u$ is zero proceeding by duality. Let $f\in C^\infty_c\left(\mathbb R^{d-1}\times]0,\infty[\right)$. We take $U=U(z)\in\mathbb R^N$ the solution to the elliptic boundary value problem
\begin{equation*}
\left\{
\begin{array}{rll}
-\nabla\cdot B^*(\mathrm{M}z)\nabla U&=f,&\quad z_d>0\\
U&=0,&\quad z_d=0
\end{array}
\right. 
\end{equation*}
given by Green's representation formula: for all $z\in\mathbb R^{d-1}\times]0,\infty[$,
\begin{equation*}
U(z)=\int_{\tilde{z}_d>0}G^*(\mathrm{M}z,\mathrm{M}\tilde{z})f(\tilde{z})d\tilde{z}.
\end{equation*}
The bounds on $G^*$ \eqref{estALinGdqcq} and its first-order derivative \eqref{estALinnablaGdqcq}, yield:
\begin{lem}
There is $C>0$ such that for sufficiently large $z\in\mathbb R^{d-1}\times]0,\infty[$, i.e. far enough from the support of $f$,
\begin{subequations}
\begin{align}
\left|U(z)\right|&\leq C\frac{z_d}{\left(z_d^2+|z'|^2\right)^{\frac{d}{2}}},\label{estUaux}\\
\left|\nabla U(z)\right|&\leq C\frac{1}{\left(z_d^2+|z'|^2\right)^{\frac{d}{2}}}\label{estnablaUaux}.
\end{align}
\end{subequations}
\end{lem}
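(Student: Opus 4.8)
The plan is to use that $f$ has compact support, together with the estimates on Green's kernel recalled in \eqref{estALinG}, noting that these apply verbatim to $G^*$ since $A^*$ again satisfies {\bf (A1)}--{\bf (A3)}. Fix $\rho>0$ with $\operatorname{supp}f\subset B(0,\rho)\subset\mathbb R^{d-1}\times]0,\infty[$; in particular $\tilde z_d\leq\rho$ on $\operatorname{supp}f$. Since $U$ is smooth away from $\operatorname{supp}f$ (local regularity for the Green kernel), differentiation under the integral sign in the definition of $U$ is legitimate for such $z$.

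I would first record the effect of the change of variables $y=\mathrm{M}z$. As $\mathrm{M}$ is orthogonal with $\mathrm{M}e_d=n$, one has $y\cdot n=z_d$, $\tilde y\cdot n=\tilde z_d$ and $|y-\tilde y|=|z-\tilde z|$, while the chain-rule factor produced by $z\mapsto\mathrm{M}z$ is bounded by $1$. Hence \eqref{estALinGdqcq} and the refined gradient bound \eqref{estALinnablaGdqcq2}, applied to $G^*$, translate into
\begin{equation*}
\left|G^*(\mathrm{M}z,\mathrm{M}\tilde z)\right|\leq C\frac{z_d\tilde z_d}{|z-\tilde z|^d},\qquad
\left|\nabla_z\bigl[G^*(\mathrm{M}z,\mathrm{M}\tilde z)\bigr]\right|\leq C\left(\frac{\tilde z_d}{|z-\tilde z|^d}+\frac{z_d\tilde z_d}{|z-\tilde z|^{d+1}}\right).
\end{equation*}
Next I would make precise what ``sufficiently large $z$'' means: take $|z|\geq 2\rho$, so that for every $\tilde z\in\operatorname{supp}f$ one has $|z-\tilde z|\geq|z|-\rho\geq|z|/2$, together with $\tilde z_d\leq\rho$ and $z_d\leq|z|$. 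Inserting these bounds into the two displayed inequalities, and integrating against $|f|$ (bounded, compactly supported), gives $|U(z)|\leq Cz_d|z|^{-d}$ and $|\nabla U(z)|\leq C|z|^{-d}$; since $|z|^2=z_d^2+|z'|^2$, these are exactly \eqref{estUaux} and \eqref{estnablaUaux}, with $C$ absorbing $\|f\|_{L^1}$ and $\rho$.

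The only point requiring a little care---really the crux of the statement---is to invoke the \emph{refined} gradient estimate \eqref{estALinnablaGdqcq2} rather than the cruder \eqref{estALinnablaGdqcq}: the latter would only yield decay of order $|z|^{-(d-1)}$ for $\nabla U$, which is too weak for the duality argument that follows. Everything else is routine: the compact support of $f$ decouples the singularity of $G^*$ from the far-field behaviour, and the orthogonality of $\mathrm{M}$ turns the half-space weights $y\cdot n$, $\tilde y\cdot n$ into $z_d$, $\tilde z_d$.
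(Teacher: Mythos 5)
Your proof is correct and follows essentially the paper's route: the lemma is deduced directly from the Avellaneda--Lin kernel bounds applied to $G^*$, using the compact support of $f$ so that $\tilde z_d$ stays bounded and $|z-\tilde z|\geq|z|/2$ for large $z$. You are also right that the gradient bound needs the refined estimate \eqref{estALinnablaGdqcq2} rather than the cruder \eqref{estALinnablaGdqcq} (which the text nominally cites but which would only give decay $|z|^{1-d}$); absorbing $z_d\leq |z|\leq 2|z-\tilde z|$ into the second term is exactly what produces the rate $|z|^{-d}$ in \eqref{estnablaUaux}.
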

Moreover, thanks to \eqref{partial_z_dv} and \eqref{partial_z_dw}, one manages to estimate $u$ in $L^\infty$: there is $C>0$, such that for all $z\in\mathbb R^{d-1}\times]0,\infty[$,
\begin{equation}\label{estcroissuLinfty}
\left|u(z)\right|\leq\int_0^{z_d}\left|\partial_{z_d}u(z',t)\right|dt\leq z_d^\frac{1}{2}\left(\int_0^{z_d}\left|\partial_{z_d}u(z',t)\right|^2dt\right)^{\frac{1}{2}}\leq Cz_d^\frac{1}{2}.
\end{equation}
We now carry out integrations by parts:
\begin{align*}
&\int_{z_d>0}u(z)f(z)dz=-\int_{z_d>0}u(z)\nabla\cdot B^*(\mathrm{M}z)\nabla U(z)dz\\
&=\int_{z_d>0}B^{\alpha\beta}(\mathrm{M}z)\partial_{z_\beta}u(z)\partial_{z_\alpha}U(z)dz\\
&=-\int_{z_d>0}\partial_{z_\alpha}\left(B^{\alpha\beta}(\mathrm{M}z)\partial_{z_\beta}u(z)\right)U(z)dz=0.
\end{align*}
To fully justify the preceding equalities we have to integrate by parts on the bounded domain $[-R,R]^{d-1}\times[0,R]$ and prove that the boundary integrals vanish in the limit $R\rightarrow\infty$. We actually show that
\begin{subequations}
\begin{align}
&\int_{\partial\left([-R,R]^{d-1}\times[0,R]\right)}u\left(z\right)\left(B^*\left(\mathrm{M}z\right)\nabla U\left(z\right)\right)\cdot n(z)dz\stackrel{R\rightarrow\infty}{\longrightarrow}0,\label{CVipp1}\\
&\int_{\partial\left([-R,R]^{d-1}\times[0,R]\right)}\left[B\left(\mathrm{M}z\right)\nabla u\left(z\right)\right]\cdot n(z)U\left(z\right)dz\stackrel{R\rightarrow\infty}{\longrightarrow}0\label{CVipp2}.
\end{align}
\end{subequations}
On the one hand, \eqref{estcroissuLinfty} together with \eqref{estnablaUaux} yields
\begin{multline*}
\left|\int_{\partial\left([-R,R]^{d-1}\times[0,R]\right)}u\left(z\right)\left(B^*\left(\mathrm{M}z\right)\nabla U\left(z\right)\right)\cdot n(z)dz\right|\\
\leq C\int_{\partial\left([-R,R]^{d-1}\times[0,R]\right)\setminus\left([-R,R]^{d-1}\times\{0\}\right)}z_d^\frac{1}{2}\frac{1}{|z|^d}dz\leq C\frac{1}{R^{d-\frac{1}{2}}}R^{d-1}=\frac{C}{R^\frac{1}{2}},
\end{multline*}
which gives \eqref{CVipp1}. On the other hand, it follows from \eqref{nablavtends0}, \eqref{nablawtends0} and \eqref{estUaux} that
\begin{align*}
&\left|\int_{[-R,R]^{d-1}\times\{R\}}\left[B\left(\mathrm{M}z\right)\nabla u\left(z\right)\right]\cdot n(z)U\left(z\right)dz\right|\\
&\leq C\int_{[-R,R]^{d-1}}\left|\nabla u\left(z',R\right)\right|\left|U\left(z',R\right)\right|dz'\\
&\leq C\int_{[-R,R]^{d-1}}\left\|\nabla u\right\|_{L^\infty\left(\mathbb R^{d-1}\times]R,\infty[\right)}\frac{R}{\left(R^2+|z'|^2\right)^\frac{d}{2}}dz'\\
&\leq C\left\|\nabla u\right\|_{L^\infty\left(\mathbb R^{d-1}\times]R,\infty[\right)}\stackrel{R\rightarrow\infty}{\longrightarrow}0
\end{align*}
and that
\begin{align*}
&\left|\int_{\{R\}\times[-R,R]^{d-2}\times[0,R]}\left[B\left(\mathrm{M}z\right)\nabla u\left(z\right)\right]\cdot n(z)U\left(z\right)dz\right|\\
&\leq C\int_{[-R,R]^{d-2}\times[0,R]}\left\|\nabla u\right\|_{L^\infty\left(\mathbb R^{d-1}\times]z_d,\infty[\right)}\frac{z_d}{\left(R^2+|z_2|^2+\ldots\ |z_d|^2\right)^\frac{d}{2}}dz_2\ldots\ dz_d\\
&\leq C\int_{[-1,1]^{d-2}\times[0,1]}\left\|\nabla u\right\|_{L^\infty\left(\mathbb R^{d-1}\times]Rz_d,\infty[\right)}\frac{z_d}{\left(1+|z_2|^2+\ldots\ |z_d|^2\right)^\frac{d}{2}}dz_2\ldots\ dz_d
\end{align*}
tend to $0$ by dominated convergence, which yields \eqref{CVipp2} and terminates the proof of theorem \ref{theov=w}. 

For practical convenience, we have argued that $v=w$. Yet, theorem \ref{theov=w} proves that the variational solution $v_{bl}$ of \eqref{sysbl} equals the Poisson solution $w_{bl}$. This allows to work, for the rest of the paper, with the solution $v_{bl}$ of \eqref{sysbl} satisfying \eqref{nablavbltends0} and \eqref{partial_z_dvbl}, no matter whether this solution is constructed variationally or via Poisson's kernel. Thanks to the bound \eqref{estALinPP}, $v_{bl}$ is seen to be bounded on $\Omega_n$. 
 
\subsection{An homogenization problem}

Studying the tail of $v_{bl}$, i.e. the limit when $y\cdot n\rightarrow\infty$ of $v_{bl}(y)$, boils down to describing the asymptotics of $P\left(y,\tilde{y}\right)$ for $y$ far away from the boundary $\partial\Omega_n$. One of the main focus of our paper is thus to expand $P(y,\tilde{y})$ for $|y-\tilde{y}|\gg 1$, where $y\in\Omega_n$ and $\tilde{y}\in\partial\Omega_n$, i.e. to describe the large scale asymptotics of $P$. Let $y\in\Omega_n$ and $\tilde{y}\in\partial\Omega_n$ and
\begin{equation*}
\varepsilon:=\frac{1}{|y-\tilde{y}|}.
\end{equation*}
If $|y-\tilde{y}|\gg 1$, then $\varepsilon\ll 1$ is a small parameter. Introducing the rescaled variables 
\begin{equation*}
x:=\varepsilon y\in\Omega_n\quad\mbox{and}\quad \tilde{x}:=\varepsilon\tilde{y}\in\Omega_n
\end{equation*}
yields $|x-\tilde{x}|=1$. Such a scaling transforms our initial question of the large scale asymptotic description of $P$ into the study of $\frac{1}{\varepsilon^{d-1}}P\left(\frac{x}{\varepsilon},\frac{\tilde{x}}{\varepsilon}\right)$ for $\varepsilon\rightarrow 0$ and $|x-\tilde{x}|$ close to $1$.
\begin{lem}\label{lemPepsP}
Let $\varepsilon >0$ and call $G^\varepsilon$ (resp. $P^\varepsilon$) the Green (resp. Poisson) kernel associated to the operator $L^\varepsilon=-\nabla\cdot A\left(\frac{x}{\varepsilon}\right)\nabla\cdot$ and the domain $\Omega_n$.\\
Then,
\begin{enumerate}
\item for all $x,\ \tilde{x}\in\Omega_n$,
\begin{equation}\label{scalingGreen}
G^\varepsilon(x,\tilde{x})=\frac{1}{\varepsilon^{d-2}}G\left(\frac{x}{\varepsilon},\frac{\tilde{x}}{\varepsilon}\right);
\end{equation}
\item for all $x\in\Omega_n$, $\tilde{x}\in\partial\Omega_n$,
\begin{equation}\label{scalingPoisson}
P^\varepsilon(x,\tilde{x})=\frac{1}{\varepsilon^{d-1}}P\left(\frac{x}{\varepsilon},\frac{\tilde{x}}{\varepsilon}\right).
\end{equation}
\end{enumerate}
\end{lem}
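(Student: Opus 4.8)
The plan is to verify directly that the functions on the right-hand sides of \eqref{scalingGreen} and \eqref{scalingPoisson} solve the boundary value problems defining $G^\varepsilon$ and $P^\varepsilon$, and then to invoke uniqueness. Everything rests on the scale invariance of the half-space recorded at the beginning of this section: since $a=0$, one has $z\in\Omega_n\iff\varepsilon^{-1}z\in\Omega_n$ and $z\in\partial\Omega_n\iff\varepsilon^{-1}z\in\partial\Omega_n$ for every $\varepsilon>0$.

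First I would treat the Green kernel. Fix $\tilde x\in\Omega_n$, set $H(x,\tilde x):=\varepsilon^{-(d-2)}G\bigl(\varepsilon^{-1}x,\varepsilon^{-1}\tilde x\bigr)$, write $y:=\varepsilon^{-1}x$, $\tilde y:=\varepsilon^{-1}\tilde x$, and use $\nabla_x=\varepsilon^{-1}\nabla_y$ to get
\begin{equation*}
-\nabla_x\cdot A\!\left(\tfrac{x}{\varepsilon}\right)\nabla_x H(x,\tilde x)=-\varepsilon^{-d}\Bigl[\nabla_y\cdot A(y)\nabla_yG\Bigr]\bigl(\varepsilon^{-1}x,\varepsilon^{-1}\tilde x\bigr)=\varepsilon^{-d}\,\delta\!\left(\tfrac{x-\tilde x}{\varepsilon}\right)\Idd_N.
\end{equation*}
The homogeneity $\delta(\varepsilon^{-1}\zeta)=\varepsilon^{d}\delta(\zeta)$ on $\mathbb R^d$ shows that the normalisation $\varepsilon^{-(d-2)}$ is exactly the one making $\varepsilon^{-d}\delta(\varepsilon^{-1}(x-\tilde x))=\delta(x-\tilde x)$, so $H(\cdot,\tilde x)$ satisfies $-\nabla_x\cdot A(x/\varepsilon)\nabla_xH=\delta(x-\tilde x)\Idd_N$ in $\Omega_n$. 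It vanishes on $\partial\Omega_n$ because $G$ does, and it inherits from $G$ the global bounds \eqref{estALinGdqcq}--\eqref{estALinnablaGdqcq} which pick out the Green kernel among all solutions. Hence, by uniqueness of the Green kernel of $L^\varepsilon$ on $\Omega_n$ (existence being recalled after \eqref{sysGreen}), $H(\cdot,\tilde x)=G^\varepsilon(\cdot,\tilde x)$, which is \eqref{scalingGreen}.

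The Poisson kernel identity \eqref{scalingPoisson} then follows by differentiation. Writing the expression \eqref{exprP} for $L^\varepsilon$, namely $P^\varepsilon_{ij}(x,\tilde x)=-A^{\alpha\beta}_{kj}(\tilde x/\varepsilon)\,\partial_{\tilde x_\alpha}G^\varepsilon_{ik}(x,\tilde x)\,n_\beta$, and using \eqref{scalingGreen},
\begin{equation*}
\partial_{\tilde x_\alpha}G^\varepsilon_{ik}(x,\tilde x)=\varepsilon^{-(d-2)}\varepsilon^{-1}\bigl(\partial_{\tilde y_\alpha}G_{ik}\bigr)\!\left(\tfrac{x}{\varepsilon},\tfrac{\tilde x}{\varepsilon}\right)=\varepsilon^{-(d-1)}\bigl(\partial_{\tilde y_\alpha}G_{ik}\bigr)\!\left(\tfrac{x}{\varepsilon},\tfrac{\tilde x}{\varepsilon}\right),
\end{equation*}
so that substituting and recognising $-A^{\alpha\beta}_{kj}(\tilde y)\partial_{\tilde y_\alpha}G_{ik}(y,\tilde y)n_\beta=P_{ij}(y,\tilde y)$ evaluated at $y=x/\varepsilon$, $\tilde y=\tilde x/\varepsilon$ yields \eqref{scalingPoisson}. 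The only delicate bit is the bookkeeping of the powers of $\varepsilon$ together with the scaling law of the Dirac mass — this is precisely what fixes the exponents $d-2$ and $d-1$ — and the appeal to uniqueness of the kernels, which is immediate once the existence statements quoted after \eqref{sysGreen} and the characterising bounds \eqref{estALinGdqcq}--\eqref{estALinnablaGdqcq} are in hand; no genuine obstacle arises.
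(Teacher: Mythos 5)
Your argument is correct in substance, but it takes a different route from the paper. The paper does not verify any distributional equation for the rescaled kernel: it argues by duality through the representation formula. Taking an arbitrary $f\in C^\infty_c(\Omega_n)$ and the solution $u$ of $-\nabla\cdot A(y)\nabla u=f$ in $\Omega_n$ with zero Dirichlet data, it observes that $u^\varepsilon(x):=u\bigl(\tfrac{x}{\varepsilon}\bigr)$ solves the $\varepsilon$-problem with source $\tfrac{1}{\varepsilon^2}f\bigl(\tfrac{x}{\varepsilon}\bigr)$, writes both $u^\varepsilon(x)$ and $u\bigl(\tfrac{x}{\varepsilon}\bigr)$ via the Green kernels $G^\varepsilon$ and $G$, changes variables in the second integral, and concludes \eqref{scalingGreen} since $f$ is arbitrary; \eqref{scalingPoisson} is obtained by the analogous argument with boundary data. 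The advantage of this route is that it only uses the defining property of the kernels as representation kernels, so no separate uniqueness statement is needed. You instead verify that $\varepsilon^{-(d-2)}G(\cdot/\varepsilon,\cdot/\varepsilon)$ satisfies the Dirac problem \eqref{sysGreeneps} (via the homogeneity of $\delta$) and then invoke uniqueness of the Green kernel of $L^\varepsilon$ in $\Omega_n$, claiming that the scale-invariant bounds \eqref{estALinGdqcq}--\eqref{estALinnablaGdqcq} characterise it. That is the one soft spot: in the unbounded half-space, uniqueness of a kernel solving the Dirac problem with zero boundary data is not automatic and is not stated in the paper in the pointwise-bound class you use; the constructions cited after \eqref{sysGreen} (Hofmann--Kim, Dong--Kim) give uniqueness in a class defined through the representation property, so the cleanest way to close your argument is precisely the paper's duality step (test both candidate kernels against arbitrary $f\in C^\infty_c$ and use that they represent the same solution). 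Your derivation of \eqref{scalingPoisson} by differentiating \eqref{scalingGreen} in $\tilde x$ and inserting it into \eqref{exprP}--\eqref{exprPeps} is perfectly fine, and is arguably more direct than redoing the duality argument with boundary data.
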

\begin{proof}
This lemma follows easily from Green's integral representation formula. Let $f\in C^\infty_c\left(\Omega_n\right)$, $u^\varepsilon=u^\varepsilon(x)\in\mathbb R^N$ the solution of 
\begin{equation*}
\left\{
\begin{array}{rll}
-\nabla\cdot A\left(\frac{x}{\varepsilon}\right)\nabla u^\varepsilon&=\frac{1}{\varepsilon^2}f\left(\frac{x}{\varepsilon}\right),& x\cdot n>0\\
u^\varepsilon&=0,& x\cdot n=0
\end{array}
\right.
\end{equation*}
and $u=u(y)\in\mathbb R^N$ the solution of
\begin{equation*}
\left\{
\begin{array}{rll}
-\nabla\cdot A(y)\nabla u&=f,& y\cdot n>0\\
u&=0,& y\cdot n=0
\end{array}
\right..
\end{equation*}
For all $x\in\Omega_n$, 
\begin{multline*}
\int_{\Omega_n}G^\varepsilon(x,\tilde{x})\frac{1}{\varepsilon^2}f\left(\frac{\tilde{x}}{\varepsilon}\right)d\tilde{x}=u^\varepsilon(x)=u\left(\frac{x}{\varepsilon}\right)\\
=\int_{\Omega_n}G\left(\frac{x}{\varepsilon},\tilde{y}\right)f(\tilde{y})d\tilde{y}=\int_{\Omega_n}\frac{1}{\varepsilon^d}G\left(\frac{x}{\varepsilon},\frac{\tilde{x}}{\varepsilon}\right)f\left(\frac{\tilde{x}}{\varepsilon}\right)d\tilde{x},
\end{multline*}
which yields \eqref{scalingGreen}; \eqref{scalingPoisson} easily follows from analogous ideas.
\end{proof}

It immediately follows from the definition of $G^\varepsilon$, that for all $\tilde{x}\in\Omega_n$, $G^\varepsilon\left(\cdot,\tilde{x}\right)$ solves the system 
\begin{equation}\label{sysGreeneps}
\left\{
\begin{array}{rll}
-\nabla_x\cdot A\left(\frac{x}{\varepsilon}\right)\nabla_x G^\varepsilon\left(x,\tilde{x}\right)&=\delta(x-\tilde{x})\Idd_N, & x\cdot n>0\\
G^\varepsilon(x,\tilde{x})&=0,& x\cdot n=0
\end{array}
\right. .
\end{equation}
The Poisson kernel $P^\varepsilon$ satisfies: for all $i,\ j\in\{1,\ldots N\}$, for all $x\in\Omega_n$, $\tilde{x}\in\partial\Omega_n$,
\begin{subequations}\label{exprPeps}
\begin{align}
P^\varepsilon_{ij}(x,\tilde{x})&=-A^{\alpha\beta}_{kj}\left(\frac{\tilde{x}}{\varepsilon}\right)\partial_{\tilde{x}_\alpha}G^\varepsilon_{ik}(x,\tilde{x})n_\beta\\
&=-\left(A^*\right)^{\beta\alpha}_{jk}\left(\frac{\tilde{x}}{\varepsilon}\right)\partial_{\tilde{x}_\alpha}G^{*,\varepsilon}_{ki}(\tilde{x},x)n_\beta\\
&=-\left[A^{*,\beta\alpha}\left(\frac{\tilde{x}}{\varepsilon}\right)\partial_{\tilde{x}_\alpha}G^{*,\varepsilon}(\tilde{x},x)n_\beta\right]_{ji}\\
&=-\left[A^*\left(\frac{\tilde{x}}{\varepsilon}\right)\nabla_{\tilde{x}}G^{*,\varepsilon}(\tilde{x},x)\cdot n\right]^T_{ij}.
\end{align}
\end{subequations}
where $G^{*,\varepsilon}$ is the Green kernel associated to the operator $L^{*,\varepsilon}=-\nabla\cdot A^*\left(\frac{x}{\varepsilon}\right)\nabla\cdot$ and the domain $\Omega_n$.

The estimates \eqref{estALinG} and \eqref{estALinP} can be rescaled. In particular, there exists $C>0$ such that for all $\varepsilon>0$, for all $x,\ \tilde{x}\in\overline{\Omega_n}$, $\tilde{x}\neq x$, for all $d\geq 2$,
\begin{subequations}
\begin{alignat}{2}
\left|G^\varepsilon(x,\tilde{x})\right|&\leq C\left(\left|\logg\left|x-\tilde{x}\right|\right|+1\right),&\quad\mbox{if } d=2\quad \mbox{(see \cite{alin} theorem $13$ (ii))},\label{estGepsd=2}\\
\left|G^\varepsilon(x,\tilde{x})\right|&\leq \frac{C}{|x-\tilde{x}|^{d-2}},&\quad\mbox{if } d\geq 3,\label{estGeps}\\
\left|G^\varepsilon(x,\tilde{x})\right|&\leq C\frac{\left(x\cdot n\right)\left(\tilde{x}\cdot n\right)}{|x-\tilde{x}|^d},\label{estGepsdqcq}\\
\left|\nabla_{\tilde{x}}G^\varepsilon(x,\tilde{x})\right|&\leq \frac{C}{|x-\tilde{x}|^{d-1}},\label{estnablaGeps}
\end{alignat}
and for all $x\in\Omega_n$, $\tilde{x}\in\partial\Omega_n$,
\begin{align}
\left|P^\varepsilon(x,\tilde{x})\right|&\leq C\frac{x\cdot n}{|x-\tilde{x}|^d},\label{estPeps}\\
\left|\nabla_xP^\varepsilon(x,\tilde{x})\right|&\leq C\left(\frac{1}{|x-\tilde{x}|^d}+\frac{x\cdot n}{|x-\tilde{x}|^{d+1}}\right)\label{estnablaPeps}.
\end{align}
\end{subequations}

According to lemma \ref{lemPepsP}, we now deal with highly oscillating kernels $G^\varepsilon$ and $P^\varepsilon$, instead of looking at $G$ and $P$. Hence the asymptotic description of $G$ (resp. $P$) at large scales, is replaced by an homogenization problem on $G^\varepsilon$ (resp. $P^\varepsilon$). This fact, which has been stressed by Avellaneda and Lin (see \cite{alinLp} corollary on p. $903$), is the cornerstone of our method.

\selectlanguage{english}

\pagestyle{plain}

\section{Homogenization in the half-space}
\label{secdual}

This section is concerned with the asymptotics, for small $\varepsilon$, of $u^\varepsilon=u^\varepsilon(x)\in\mathbb R^N$ solving
\begin{equation}\label{sysueps}
\left\{
\begin{array}{rll}
-\nabla\cdot A\left(\frac{x}{\varepsilon}\right)\nabla u^\varepsilon&=f,& x\cdot n>0\\
u^\varepsilon&=0,& x\cdot n=0
\end{array}
\right. .
\end{equation}
The study of this dual homogenization problem is preparatory to the expansion of the Green and Poisson kernels in the next section. In the introduction, we defined the interior and boundary layer correctors to $u^\varepsilon$ up to the order $\varepsilon$ and reviewed some error estimates in the case of a bounded domain $\Omega$. The purpose of this section is to show similar estimates, yet in the unbounded domain $\Omega_n$.  

Let $f\in C^\infty_c\left(\overline{\Omega_n}\right)$; note that the support of $f$ may intersect the boundary. We define $u^0=u^0(x)\in\mathbb R^N$ as the solution of
\begin{equation}\label{sysu0}
\left\{
\begin{array}{rll}
-\nabla\cdot A^0\nabla u^0&=f,& x\cdot n>0\\
u^0&=0,& x\cdot n=0
\end{array}
\right. ,
\end{equation}
$u^1=u^1(x,y)\in\mathbb R^N$ by $u^1(x,y):=\chi^\alpha(y)\partial_{x_\alpha}u^0(x)$, for all $x\in\Omega_n$ and $y\in\mathbb T^d$, and $u^{1,\varepsilon}_{bl}=u^{1,\varepsilon}_{bl}(x)\in\mathbb R^N$ as the Poisson solution to
\begin{equation*}
\left\{
\begin{array}{rll}
-\nabla\cdot A\left(\frac{x}{\varepsilon}\right)\nabla u^{1,\varepsilon}_{bl}&=0,& x\cdot n>0\\
u^{1,\varepsilon}_{bl}&=-u^1\left(x,\frac{x}{\varepsilon}\right)=-\chi^\alpha\left(\frac{x}{\varepsilon}\right)\partial_{x_\alpha}u^0(x),& x\cdot n=0
\end{array}
\right. .
\end{equation*}
The variational solution $u^\varepsilon$ (resp. $u^0$) coincides with the solution given by Green's integral formula. Besides, $u^\varepsilon$, $u^0$, as well as $u^{1,\varepsilon}_{bl}$ belong to $C^\infty\left(\overline{\Omega_n}\right)$, thanks to the smoothness of the boundary $\partial\Omega_n$, using local regularity estimates from \cite{adn2}. The rest of this section is devoted to the proof of the following proposition:
\begin{prop}\label{propestuepspropa}
Let $r^{1,\varepsilon}_{bl}:=u^\varepsilon(x)-u^0(x)-\varepsilon\chi^\alpha\left(\frac{x}{\varepsilon}\right)\partial_{x_\alpha}u^0-\varepsilon u^{1,\varepsilon}_{bl}(x)$, and $\delta>0$.\\
Then, there exists a constant $C>0$, such that for all $\varepsilon>0$,
\begin{subequations}\label{estuepsprop}
\begin{align}
\left\|r^{1,\varepsilon}_{bl}\right\|_{L^\infty\left(\Omega_n\right)}&\leq C\varepsilon\left\|f\right\|_{W^{1,\frac{d}{2}+\delta}\left(\Omega_n\right)},\label{estuepspropa}\\
\left\|u^{1,\varepsilon}_{bl}\right\|_{L^\infty\left(\Omega_n\right)}&\leq C\left\|f\right\|_{W^{1,\frac{d}{2}+\delta}\left(\Omega_n\right)},\label{estuepspropb}\\
\left\|u^\varepsilon-u^0\right\|_{L^\infty\left(\Omega_n\right)}&\leq C\varepsilon\left\|f\right\|_{W^{1,\frac{d}{2}+\delta}\left(\Omega_n\right)}\label{estuepspropc}.
\end{align}
\end{subequations}
\end{prop}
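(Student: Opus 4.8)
The plan is to follow the classical two-scale error analysis but to control everything in $L^\infty(\Omega_n)$ using the Avellaneda--Lin machinery recalled above, rather than in energy norms. First I would write the equation satisfied by the full remainder $r^{1,\varepsilon}_{bl}$. Since $u^\varepsilon$ solves \eqref{sysueps}, $u^0$ solves \eqref{sysu0}, and $u^1(x,x/\varepsilon)=\chi^\alpha(x/\varepsilon)\partial_{x_\alpha}u^0$, a direct computation (using the cell problem \eqref{syschi} for $\chi^\beta$ to cancel the $O(\varepsilon^{-1})$ and $O(1)$ terms) gives
\begin{equation*}
\left\{\begin{array}{rll}
-\nabla\cdot A\left(\tfrac{x}{\varepsilon}\right)\nabla r^{1,\varepsilon}_{bl}&=\varepsilon\,\nabla\cdot F^\varepsilon,&x\cdot n>0\\
r^{1,\varepsilon}_{bl}&=0,&x\cdot n=0
\end{array}\right.
\end{equation*}
where $F^\varepsilon=F^\varepsilon(x)$ is built from $A(x/\varepsilon)$, $\chi(x/\varepsilon)$, and $\nabla^2 u^0$; schematically $F^\varepsilon_i$ involves terms like $A^{\alpha\gamma}_{ij}(x/\varepsilon)\chi^\beta_{jk}(x/\varepsilon)\partial_{x_\gamma}\partial_{x_\beta}u^0_k$ plus the contribution coming from $-\varepsilon\nabla\cdot A(x/\varepsilon)\nabla u^{1,\varepsilon}_{bl}=0$ (the boundary layer term is $A(x/\varepsilon)$-harmonic by construction, so it does not enter the interior source, only the boundary data, where it exactly kills the trace $-\varepsilon\chi(x/\varepsilon)\cdot\nabla u^0$). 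The homogeneous boundary condition is the whole point of adding $u^{1,\varepsilon}_{bl}$.

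Next I would estimate $r^{1,\varepsilon}_{bl}$ in $L^\infty$. By Green's representation for the operator $L^\varepsilon$ on $\Omega_n$, $r^{1,\varepsilon}_{bl}(x)=-\varepsilon\int_{\Omega_n}\partial_{\tilde x_\alpha}G^\varepsilon(x,\tilde x)F^{\varepsilon,\alpha}(\tilde x)\,d\tilde x$ after an integration by parts. Using the rescaling $G^\varepsilon(x,\tilde x)=\varepsilon^{-(d-2)}G(x/\varepsilon,\tilde x/\varepsilon)$ together with the Avellaneda--Lin bound \eqref{estnablaGeps}, $|\nabla_{\tilde x}G^\varepsilon(x,\tilde x)|\le C|x-\tilde x|^{-(d-1)}$, one gets
\begin{equation*}
\left|r^{1,\varepsilon}_{bl}(x)\right|\le C\varepsilon\int_{\Omega_n}\frac{|F^\varepsilon(\tilde x)|}{|x-\tilde x|^{d-1}}\,d\tilde x.
\end{equation*}
The kernel $|x-\tilde x|^{-(d-1)}$ is the Riesz kernel $I_1$, which maps $L^{q}$ into $L^\infty$ as soon as $q>d$; since $\|F^\varepsilon\|_{L^q(\Omega_n)}\le C\|u^0\|_{W^{2,q}(\Omega_n)}$ (because $\chi$, $A$ are bounded and $F^\varepsilon$ is linear in $\nabla^2u^0$) and $\|u^0\|_{W^{2,q}(\Omega_n)}\le C\|f\|_{L^q(\Omega_n)}$ by Calder\'on--Zygmund estimates for the constant-coefficient operator $-\nabla\cdot A^0\nabla\cdot$ in the half-space, taking $q=\frac{d}{2}+\delta$... — and here I need to be slightly careful: the exponent in the statement is $W^{1,d/2+\delta}$, so the intended chain is rather $\|F^\varepsilon\|_{L^r}\lesssim\|\nabla^2u^0\|_{L^r}\lesssim\|f\|_{W^{?,\,d/2+\delta}}$ via Sobolev embedding $W^{1,d/2+\delta}\hookrightarrow L^{r}$ with $r>d$ chosen appropriately, then feed $r$ into the Riesz potential bound. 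So the precise bookkeeping of exponents (balancing Sobolev embedding gain of one derivative, the $L^q\to L^\infty$ threshold $q>d$ for $I_1$, and Calder\'on--Zygmund regularity) is where the hypothesis $f\in W^{1,d/2+\delta}$ comes from, and this is the one computational point that must be done carefully rather than waved through.

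For \eqref{estuepspropb}, the bound on $u^{1,\varepsilon}_{bl}$ itself: represent it by the Poisson kernel, $u^{1,\varepsilon}_{bl}(x)=-\int_{\partial\Omega_n}P^\varepsilon(x,\tilde x)\chi^\alpha(\tilde x/\varepsilon)\partial_{x_\alpha}u^0(\tilde x)\,d\tilde x$, and use $|P^\varepsilon(x,\tilde x)|\le C\,(x\cdot n)|x-\tilde x|^{-d}$ from \eqref{estPeps}; integrating this Poisson-type kernel against the bounded-times-$\nabla u^0$ data on the hyperplane gives $\|u^{1,\varepsilon}_{bl}\|_{L^\infty}\le C\|\nabla u^0\|_{L^\infty(\partial\Omega_n)}\le C\|f\|_{W^{1,d/2+\delta}}$, again invoking the half-space regularity estimate for $u^0$ and the fact that $\int_{\mathbb R^{d-1}}(x\cdot n)(( x\cdot n)^2+|x'-\tilde x'|^2)^{-d/2}\,d\tilde x'$ is bounded uniformly (it is a Poisson-kernel-like integral). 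Finally \eqref{estuepspropc} follows by the triangle inequality: $\|u^\varepsilon-u^0\|_{L^\infty}\le\|r^{1,\varepsilon}_{bl}\|_{L^\infty}+\varepsilon\|\chi(\cdot/\varepsilon)\cdot\nabla u^0\|_{L^\infty}+\varepsilon\|u^{1,\varepsilon}_{bl}\|_{L^\infty}$, and all three terms are $O(\varepsilon\|f\|_{W^{1,d/2+\delta}})$ by \eqref{estuepspropa}, boundedness of $\chi$, and \eqref{estuepspropb}. The main obstacle, as indicated, is the exponent juggling in the Riesz-potential step — making sure $W^{1,d/2+\delta}$ is exactly enough to push $F^\varepsilon$ into an $L^q$ with $q>d$ so that $I_1 F^\varepsilon\in L^\infty$ — together with verifying that the Calder\'on--Zygmund and interior/boundary Schauder-type estimates of Theorems \ref{theolocalboundaryueps} and \ref{theolocalboundarygradientueps} can be patched across the unbounded domain $\Omega_n$ (this is where the decay of $G^\varepsilon$, $P^\varepsilon$ at infinity and the compact support of $f$ are used to guarantee integrability).
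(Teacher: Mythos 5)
Your overall architecture (write the equation for $r^{1,\varepsilon}_{bl}$, represent it by $G^\varepsilon$, use the Avellaneda--Lin kernel bounds, then Poisson's kernel for \eqref{estuepspropb} and a triangle inequality for \eqref{estuepspropc}) is the same as the paper's, and your treatment of \eqref{estuepspropb} and \eqref{estuepspropc} is essentially correct. But there is a genuine gap at the very first step. You claim that the cell problem \eqref{syschi} cancels both the $O(\varepsilon^{-1})$ \emph{and} the $O(1)$ terms in the source for $r^{1,\varepsilon}_{bl}$, so that the right-hand side is $\varepsilon\,\nabla\cdot F^\varepsilon$ with $F^\varepsilon$ controlled by $\nabla^2u^0$. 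The cell problem only kills the $O(\varepsilon^{-1})$ term. The $O(1)$ term equals $\nabla_x\cdot\bigl(\Phi(y)\nabla u^0\bigr)+\nabla_y\cdot\bigl(A(y)\chi(y)\nabla^2u^0\bigr)$ evaluated at $y=x/\varepsilon$, where $\Phi(y)=A(\Idd+\nabla_y\chi)-\int_{\mathbb T^d}A(\Idd+\nabla_y\chi)$ has zero mean and is divergence free in $y$ but is \emph{not} zero pointwise; since your remainder does not include the second corrector $\varepsilon^2\Gamma$, this term does not disappear by any algebraic identity. The paper's way out is the flux-corrector lemma (lemma \ref{lemexistencePsi}): $\Phi=\nabla_y\cdot\Psi$, which converts the $O(1)$ term into $\varepsilon\nabla\cdot h^\varepsilon+\varepsilon g^\varepsilon$ with $h^\varepsilon=\bigl(\Psi+A\chi\bigr)\left(\tfrac{x}{\varepsilon}\right)\nabla^2u^0$ and $g^\varepsilon=-\Psi\left(\tfrac{x}{\varepsilon}\right)\nabla^3u^0$. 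Note that $g^\varepsilon$ involves \emph{third} derivatives of $u^0$, which is exactly why the hypothesis is $f\in W^{1,\frac d2+\delta}$ (elliptic regularity giving $u^0\in W^{3,\frac d2+\delta}$); with your purported source you would never see why one derivative of $f$ is needed, which is a sign the structure is wrong.

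A second, smaller gap is the kernel step: the claim that convolution with $|x-\tilde x|^{-(d-1)}$ maps $L^q$ into $L^\infty$ for $q>d$ is false globally, since $|x|^{-(d-1)}$ belongs to no single $L^{q'}(\mathbb R^d)$ (it fails either at the origin or at infinity). The paper splits the integral into $|x-\tilde x|<R$ and $|x-\tilde x|>R$, handling the near part by Young/H\"older with $\widetilde h^\varepsilon\in L^{d+\delta}$, and the far part using the pointwise decay $|h^\varepsilon|\lesssim|x|^{-d}$, $|g^\varepsilon|\lesssim|x|^{-d-1}$ (lemma \ref{lemhepsgeps}, which uses the compact support of $f$ and the decay of $\nabla^2G^0$, $\nabla^3 G^0$); you gesture at this at the end, but it has to replace, not supplement, the Riesz-potential claim. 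Finally, for $d=2$ the bound $|G^\varepsilon|\le C\bigl(\bigl|\logg|x-\tilde x|\bigr|+1\bigr)$ in \eqref{estGepsd=2} is not summable at infinity against $g^\varepsilon$ in the naive way, and the paper substitutes the refined estimate \eqref{estGepsdqcq} there; your argument does not address this case.
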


The proof of \eqref{estuepspropa} relies on estimates in $L^\infty$ of $r^{1,\varepsilon}_{bl}$ solution of the following elliptic system
\begin{equation*}
\left\{
\begin{array}{rll}
-\nabla\cdot A\left(\frac{x}{\varepsilon}\right)\nabla r^{1,\varepsilon}_{bl}&=f^\varepsilon,& x\cdot n>0\\
r^{1,\varepsilon}_{bl}&=0,& x\cdot n=0
\end{array}
\right. ,
\end{equation*}
where $f^\varepsilon=f^\varepsilon(x)\in\mathbb R^N$ is given by
\begin{equation*}
f^\varepsilon:=f+\nabla\cdot A\left(\frac{x}{\varepsilon}\right)\nabla u^0+\varepsilon\nabla\cdot A\left(\frac{x}{\varepsilon}\right)\nabla\left(\chi^\alpha\left(\frac{x}{\varepsilon}\right)\partial_{x_\alpha} u^0(x)\right).
\end{equation*}
The idea is to use the integral representation provided by Green's formula in order to bound $r^{1,\varepsilon}_{bl}$. However, as such, $f^\varepsilon$ does not seem to be of order $\varepsilon$. Let us thus work on $f^\varepsilon$ to make its structure more explicit. Expanding $f^\varepsilon$, we get for all $x\in\Omega_n$
\begin{align}\label{dvptfeps}
f^\varepsilon(x)&=\frac{1}{\varepsilon}\left[\nabla_y\cdot A(y)\nabla_x u^0+\nabla_y\cdot A(y)\nabla_y u^1\right]\left(x,\frac{x}{\varepsilon}\right)\\
&\quad+\left[f+\nabla_x\cdot A(y)\nabla_x u^0+\nabla_x\cdot A(y)\nabla_y u^1+\nabla_y\cdot A(y)\nabla_x u^1\right]\left(x,\frac{x}{\varepsilon}\right)\nonumber\\
&\quad\quad+\varepsilon\left[\nabla_x\cdot A(y)\nabla_x u^1\right]\left(x,\frac{x}{\varepsilon}\right)\nonumber.
\end{align}
We aim to get rid of terms of order $\varepsilon^{-1}$ and $\varepsilon^0$ in \eqref{dvptfeps}. The term of order $\varepsilon^{-1}$ easily cancels thanks to \eqref{syschi}:
\begin{equation*}
\nabla_y\cdot A(y)\nabla_x u^0+\nabla_y\cdot A(y)\nabla_y u^1=\left[\partial_{y_\alpha}\left(A^{\alpha\gamma}(y)\right)+\partial_{y_\alpha}\left(A^{\alpha\beta}(y)\partial_{y_\beta}\chi^\gamma(y)\right)\right]\partial_{x_\gamma}u^0=0.
\end{equation*}
For the term of order $\varepsilon^0$ in the r.h.s. of \eqref{dvptfeps}
\begin{multline}\label{zerothtermfeps}
\left[f+\nabla_x\cdot A(y)\nabla_x u^0+\nabla_x\cdot A(y)\nabla_yu^1+\nabla_y\cdot A(y)\nabla_x u^1\right]\left(x,\frac{x}{\varepsilon}\right)\\
=\left[f+\nabla_x\cdot v+\nabla_y\cdot A(y)\nabla_xu^1\right]\left(x,\frac{x}{\varepsilon}\right),
\end{multline}
where $v=v(x,y):=A(y)\nabla_xu^0+A(y)\nabla_yu^1=A(y)\left[\nabla_xu^0+\nabla_yu^1\right]$, we notice that
\begin{subequations}
\begin{align*}
\nabla_y\cdot\left(v-A^0\nabla u^0\right)=0\qquad\mbox{and}\qquad\fint_{\mathbb T^d}\left(v-A^0\nabla u^0\right)=0.
\end{align*}
\end{subequations}
As $v-A^0\nabla u^0$ factors into $\Phi\nabla u^0$, with
\begin{align*}
\Phi=\Phi(y):=A(y)\left(\Idd+\nabla_y\chi(y)\right)-\int_{\mathbb T^d}A(\tilde{y})\left(\Idd+\nabla_y\chi(\tilde{y})\right)d\tilde{y}\in\mathbb R^{N^2\times d^2},
\end{align*}
one can take advantage of the classical lemma:
\begin{lem}\label{lemexistencePsi}
There exists a smooth function $\Psi=\Psi(y)\in\mathbb R^{N^2\times d^3}$ such that for all $y\in\mathbb T^d$,
\begin{equation*}
\Phi(y)=\nabla_y\cdot\Psi(y).
\end{equation*}
\end{lem}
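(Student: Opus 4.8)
The plan is to rely on the two structural properties of $\Phi$ already isolated above: it has zero average on $\mathbb{T}^d$, and it is divergence free in the fast variable, meaning $\partial_{y_\alpha}\Phi^{\alpha\beta}_{ij}=0$ for all $\beta,i,j$. The first is built into the definition of $\Phi$. The second is exactly the cell equation \eqref{syschi} in disguise, since
\begin{equation*}
\partial_{y_\alpha}\Phi^{\alpha\beta}_{ij}=\partial_{y_\alpha}A^{\alpha\beta}_{ij}+\partial_{y_\alpha}\bigl(A^{\alpha\gamma}_{ik}\partial_{y_\gamma}\chi^\beta_{kj}\bigr)=0.
\end{equation*}
Under these two hypotheses the classical flux corrector exists, and I would reproduce its construction.

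First, for each $\beta,\gamma\in\{1,\dots,d\}$ and each $i,j\in\{1,\dots,N\}$, solve the periodic Poisson problem
\begin{equation*}
-\Delta_y\theta^{\beta\gamma}_{ij}=\Phi^{\beta\gamma}_{ij}\ \text{ on }\mathbb{T}^d,\qquad \int_{\mathbb{T}^d}\theta^{\beta\gamma}_{ij}\,dy=0.
\end{equation*}
Solvability follows from the zero-average property of $\Phi$, uniqueness from the normalization, and $\theta\in C^\infty(\mathbb{T}^d)$ from interior elliptic regularity on the torus combined with the smoothness assumption \textbf{(A3)} (which makes $\Phi$, hence the data, smooth).

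Next, set
\begin{equation*}
\Psi^{\alpha\beta\gamma}_{ij}:=\partial_{y_\beta}\theta^{\alpha\gamma}_{ij}-\partial_{y_\alpha}\theta^{\beta\gamma}_{ij},
\end{equation*}
a smooth $1$-periodic tensor, antisymmetric in $(\alpha,\beta)$, with the right number of indices ($\alpha,\beta,\gamma$ running over $\{1,\dots,d\}$, $i,j$ over $\{1,\dots,N\}$). Contracting the first index,
\begin{equation*}
\partial_{y_\alpha}\Psi^{\alpha\beta\gamma}_{ij}=\partial_{y_\beta}\bigl(\partial_{y_\alpha}\theta^{\alpha\gamma}_{ij}\bigr)-\Delta_y\theta^{\beta\gamma}_{ij}=\partial_{y_\beta}\bigl(\partial_{y_\alpha}\theta^{\alpha\gamma}_{ij}\bigr)+\Phi^{\beta\gamma}_{ij},
\end{equation*}
so it suffices to check $\partial_{y_\alpha}\theta^{\alpha\gamma}_{ij}\equiv 0$. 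Applying $-\Delta_y$ and commuting derivatives, $-\Delta_y\bigl(\partial_{y_\alpha}\theta^{\alpha\gamma}_{ij}\bigr)=\partial_{y_\alpha}\Phi^{\alpha\gamma}_{ij}=0$ by the divergence-free property; since $\partial_{y_\alpha}\theta^{\alpha\gamma}_{ij}$ also has zero mean, it vanishes identically (Fourier series, or the strong maximum principle, on $\mathbb{T}^d$). Hence $\Phi=\nabla_y\cdot\Psi$.

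I do not anticipate any real difficulty: the whole argument is linear elliptic theory on the torus. The only point needing a moment's attention is to derive the divergence-free identity for $\Phi$ correctly from \eqref{syschi}. Incidentally, the antisymmetry of $\Psi$ is stronger than the statement requires --- one could instead solve $\Delta_y\theta=\Phi$ and take $\Psi$ to be the gradient of $\theta$ in the extra index, which uses nothing about divergences --- but the antisymmetric choice is the one that pays off later, when $\Psi$ is integrated by parts against $\nabla u^0$, so I would keep it.
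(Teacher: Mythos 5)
Your proof is correct. The paper does not actually prove this lemma --- it simply invokes it as ``classical'' --- and what you have written is precisely the classical flux-corrector construction (zero mean plus divergence-freeness from the cell problem \eqref{syschi}, a periodic Poisson problem for $\theta$, and the skew-symmetric $\Psi^{\alpha\beta\gamma}=\partial_{y_\beta}\theta^{\alpha\gamma}-\partial_{y_\alpha}\theta^{\beta\gamma}$ with $\partial_{y_\alpha}\theta^{\alpha\gamma}$ harmonic and mean-free, hence zero), so you are reconstructing exactly the argument the paper refers to. One small remark: the antisymmetry is indeed not needed here --- in the sequel the paper only writes $f^\varepsilon=\varepsilon\nabla\cdot h^\varepsilon+\varepsilon g^\varepsilon$ with $h^\varepsilon$ bounded and integrates by parts once against $\nabla_{\tilde{x}}G^\varepsilon$, so it never exploits the vanishing of the double divergence of a skew-symmetric tensor; your simpler choice $\Psi^{\alpha\beta\gamma}=\partial_{y_\alpha}\theta^{\beta\gamma}$ would serve just as well, though keeping the skew-symmetric version is harmless.
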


Via lemma \ref{lemexistencePsi}, \eqref{zerothtermfeps} becomes
\begin{align*}
f+\nabla_x\cdot v+\nabla_y\cdot A(y)\nabla_xu^1&=\nabla_x\cdot\left[v-A^0\nabla_xu^0\right]+\nabla_y\cdot A(y)\nabla_xu^1\\
&=\nabla_x\cdot\left(\nabla_y\cdot\left(\Psi(y)\right)\nabla u^0\right)+\nabla_y\cdot A(y)\nabla_xu^1\\
&=\nabla_y\cdot\left(\Psi(y)\nabla^2 u^0\right)+\Psi(y)\nabla^3u^0.
\end{align*}
Subsequently, for all $x\in\Omega_n$,
\begin{align*}
f^\varepsilon(x)
&=\left[\nabla_y\cdot\left(\Psi(y)\nabla^2 u^0\right)\right]\left(x,\frac{x}{\varepsilon}\right)+\varepsilon\nabla\cdot\left(A\left(\frac{x}{\varepsilon}\right)\chi\left(\frac{x}{\varepsilon}\right)\nabla^2 u^0\right)\\
&=\left[\nabla_y\cdot\left(\Psi(y)\nabla^2 u^0\right)\right]\left(x,\frac{x}{\varepsilon}\right)\\
&\quad+\varepsilon\left[\nabla_x\cdot\left(\Psi(y)\nabla^2 u^0\right)\right]\left(x,\frac{x}{\varepsilon}\right)-\varepsilon\Psi\left(\frac{x}{\varepsilon}\right)\nabla^3u^0\\
&\quad\quad+\varepsilon\nabla\cdot\left(A\left(\frac{x}{\varepsilon}\right)\chi\left(\frac{x}{\varepsilon}\right)\nabla^2u^0\right)\\
&=\varepsilon\nabla\cdot\left(\Psi\left(\frac{x}{\varepsilon}\right)\nabla^2u^0\right)+\varepsilon\nabla\cdot\left(A\left(\frac{x}{\varepsilon}\right)\chi\left(\frac{x}{\varepsilon}\right)\nabla^2u^0\right)-\varepsilon\Psi\left(\frac{x}{\varepsilon}\right)\nabla^3u^0.
\end{align*}
Hence $f^\varepsilon=\varepsilon\nabla\cdot h^\varepsilon+\varepsilon g^\varepsilon$, with
\begin{align*}
h^\varepsilon=h^\varepsilon(x)&:=\Psi\left(\frac{x}{\varepsilon}\right)\nabla^2u^0+A\left(\frac{x}{\varepsilon}\right)\chi\left(\frac{x}{\varepsilon}\right)\nabla^2u^0,\\
g^\varepsilon=g^\varepsilon(x)&:=-\Psi\left(\frac{x}{\varepsilon}\right)\nabla^3u^0.
\end{align*}
The next lemma contains estimates on $h^\varepsilon$ and $g^\varepsilon$ for large $x$.
\begin{lem}\label{lemhepsgeps}
There is a constant $C>0$, such that for all $x$ sufficiently large, for all $\varepsilon>0$,
\begin{subequations}
\begin{align}
\left|h^\varepsilon(x)\right|\leq C\frac{1}{\left|x\right|^d}\label{estheps},\\
\left|g^\varepsilon(x)\right|\leq C\frac{1}{\left|x\right|^{d+1}}\label{estgeps}.
\end{align}
\end{subequations}
\end{lem}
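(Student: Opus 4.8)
The plan is to reduce everything to decay estimates on $\nabla^2 u^0$ and $\nabla^3 u^0$ far from the support of $f$, since $\Psi$, $A$ and $\chi$ are periodic, hence uniformly bounded on $\mathbb R^d$ by assumptions \textbf{(A1)}--\textbf{(A3)} and the smoothness of the cell solutions $\chi$. Indeed, $h^\varepsilon(x)$ and $g^\varepsilon(x)$ are pointwise bounded by $C(|\nabla^2 u^0(x)|+|\nabla^3 u^0(x)|)$ and $C|\nabla^3 u^0(x)|$ respectively, with $C$ independent of $\varepsilon$; so the lemma follows once we show that for $|x|$ large, $|\nabla^2 u^0(x)|\leq C|x|^{-d}$ and $|\nabla^3 u^0(x)|\leq C|x|^{-d-1}$.

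First I would use Green's integral representation for $u^0$, namely $u^0(x)=\int_{\Omega_n}G^0(x,\tilde x)f(\tilde x)\,d\tilde x$, where $G^0$ is the Green kernel of the constant-coefficient operator $-\nabla\cdot A^0\nabla\cdot$ on $\Omega_n$. Differentiating under the integral sign gives, for any multi-index $\Lambda$ with $|\Lambda|\geq 1$,
\begin{equation*}
\partial_x^\Lambda u^0(x)=\int_{\Omega_n}\partial_x^\Lambda G^0(x,\tilde x)f(\tilde x)\,d\tilde x,
\end{equation*}
and the estimates \eqref{estG0lambda} yield $|\partial_x^\Lambda G^0(x,\tilde x)|\leq C|x-\tilde x|^{-(d-2+|\Lambda|)}$. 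Since $f\in C^\infty_c(\overline{\Omega_n})$, say $\mathrm{supp}\,f\subset B(0,R_0)$, for $|x|\geq 2R_0$ we have $|x-\tilde x|\geq |x|/2$ uniformly over $\tilde x\in\mathrm{supp}\,f$, so
\begin{equation*}
\left|\partial_x^\Lambda u^0(x)\right|\leq \frac{C}{|x|^{d-2+|\Lambda|}}\int_{\Omega_n}|f(\tilde x)|\,d\tilde x\leq\frac{C\|f\|_{L^1}}{|x|^{d-2+|\Lambda|}}.
\end{equation*}
Taking $|\Lambda|=2$ gives \eqref{estheps} and $|\Lambda|=3$ gives \eqref{estgeps}, after absorbing $\|f\|_{L^1(\Omega_n)}$ into a constant (or, if one prefers to match the $W^{1,d/2+\delta}$ norm appearing in Proposition \ref{propestuepspropa}, bounding $\|f\|_{L^1}$ by $\|f\|_{W^{1,d/2+\delta}}$ up to the support size).

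The only genuine subtlety is justifying differentiation under the integral sign and the pointwise bound on $\partial_x^\Lambda G^0$ up to third order: one must know that the half-space Green kernel $G^0$ for a constant-coefficient elliptic system enjoys the estimates \eqref{estG0lambda} for all $|\Lambda_1|+|\Lambda_2|\geq 1$, which is exactly what is quoted from \cite{adn2} and \cite{Schuwil77}; since for $|x|$ large the kernel is smooth on the relevant region (away from the diagonal $\{y=\tilde y\}$), dominated convergence legitimizes moving the derivatives inside. I expect this to be the main — and only real — obstacle, and it is handled entirely by citing the classical constant-coefficient Green kernel bounds rather than by any new computation. Everything else is the elementary observation that periodic coefficients are bounded, so the $\varepsilon$-dependence disappears and the decay is inherited verbatim from $\nabla^2 u^0$ and $\nabla^3 u^0$.
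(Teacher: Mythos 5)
Your proof is correct and follows essentially the same route as the paper: represent $u^0$ via the constant-coefficient Green kernel $G^0$, apply the bounds \eqref{estG0lambda} with $|\Lambda|=2,3$, use the compact support of $f$ to get $|x-\tilde x|\gtrsim |x|$, and absorb the bounded periodic factors $\Psi$, $A\chi$ so that the $\varepsilon$-dependence disappears. No gap; this matches the paper's argument.
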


\begin{proof}
Let $\Lambda\in\mathbb N^d$, $2\leq|\Lambda|\leq 3$, and $R>0$ such that the support of $f$ is included in $B(0,R)$. It follows from \eqref{estG0lambda} that for $x$ large enough,
\begin{align*}
\left|\partial^\Lambda_xu^0(x)\right|&\leq \int_{\Omega_n}\left|\partial^\Lambda_{x}G^0(x,\tilde{x})\right||f(\tilde{x})|d\tilde{x}\\
&\leq C\int_{B(0,R)}\frac{1}{\left|x-\tilde{x}\right|^{d-2+|\Lambda|}}d\tilde{x}\\
&\leq C\int_{B(0,R)}\frac{1}{\left(|x|-R\right)^{d-2+|\Lambda|}}d\tilde{x}\\
&\leq \frac{C}{\left(|x|-R\right)^{d-2+|\Lambda|}}\stackrel{\infty}{=}O\left(\frac{1}{|x|^{d-2+|\Lambda|}}\right),
\end{align*}
which ends the proof.
\end{proof}

These preliminaries being done, we now turn to the estimation of $r^{1,\varepsilon}_{bl}$. Let $x\in\Omega_n$ be fixed. Green's formula yields
\begin{equation}\label{r1epsblreprGreen}
r^{1,\varepsilon}_{bl}(x)=\varepsilon\int_{\Omega_n}G^\varepsilon(x,\tilde{x})\left(\nabla\cdot h^\varepsilon+g^\varepsilon\right)(\tilde{x})d\tilde{x}.
\end{equation}
We concentrate on each term of the r.h.s. of \eqref{r1epsblreprGreen} separately. The strategy in both cases is to split the integral in two parts:
\begin{enumerate}
\item for $\tilde{x}$ close to $x$, one relies on Young inequalities to bound this part in $L^\infty$;
\item for $\tilde{x}$ far from $x$, one uses \eqref{estheps} and \eqref{estgeps} to show that this part can be made arbitrarily small uniformly in $x$.
\end{enumerate}
Let $R>0$ and assume for the moment $d\geq 3$. An integration by parts 
\begin{equation*}
\int_{\Omega_n}G^\varepsilon(x,\tilde{x})\nabla\cdot h^\varepsilon(\tilde{x})d\tilde{x}=-\int_{\Omega_n}\left(\nabla_{\tilde{x}}G^\varepsilon\right)(x,\tilde{x})h^\varepsilon(\tilde{x})d\tilde{x},
\end{equation*}
together with \eqref{estGeps} and \eqref{estnablaGeps} gives
\begin{align*}
&\bigl|r^{1,\varepsilon}_{bl}(x)\bigr|\leq\varepsilon\int_{\Omega_n}\frac{C}{|x-\tilde{x}|^{d-1}}\left|h^\varepsilon(\tilde{x})\right|d\tilde{x}+\varepsilon\int_{\Omega_n}\frac{C}{|x-\tilde{x}|^{d-2}}\left|g^\varepsilon(\tilde{x})\right|d\tilde{x}\nonumber\\
&\leq\varepsilon\int_{\mathbb R^d}\frac{C}{|x-\tilde{x}|^{d-1}}1_{B(0,R)}(x-\tilde{x})\bigl|\widetilde{h}^\varepsilon(\tilde{x})\bigr|d\tilde{x}+\varepsilon\int_{\mathbb R^d}\frac{C}{|x-\tilde{x}|^{d-1}}1_{B(0,R)^c}(x-\tilde{x})\bigl|\widetilde{h}^\varepsilon(\tilde{x})\bigr|d\tilde{x}\nonumber\\
&\quad+\varepsilon\int_{\mathbb R^d}\frac{C}{|x-\tilde{x}|^{d-2}}1_{B(0,R)}(x-\tilde{x})\left|\widetilde{g}^\varepsilon(\tilde{x})\right|d\tilde{x}+\varepsilon\int_{\mathbb R^d}\frac{C}{|x-\tilde{x}|^{d-2}}1_{B(0,R)^c}(x-\tilde{x})\left|\widetilde{g}^\varepsilon(\tilde{x})\right|d\tilde{x},
\end{align*}
where $\widetilde{h}^\varepsilon$ (resp. $\widetilde{g}^\varepsilon$) is the extension of $h^\varepsilon$ (resp. $g^\varepsilon$) to $\mathbb R^d$ by $0$ outside of $\Omega_n$. Let us first concentrate on the terms involving $\widetilde{h}^\varepsilon$. First, it simply follows from lemma \ref{lemhepsgeps} that $\bigl|\widetilde{h}^\varepsilon(\tilde{x})\bigr|$ is $O\left(\frac{1}{|\tilde{x}|^{d}}\right)$ in a neighbourhood of $\infty$. One can find $p,\ p'\geq 1$ such that
\begin{equation*}
\frac{1}{p}+\frac{1}{p'}=1,\qquad p>\frac{d}{d-1},\qquad\mbox{and}\quad p'>1.
\end{equation*}
Therefore, 
\begin{equation*}
\left\|\frac{1}{|\tilde{x}|^{d-1}}1_{B(0,R)^c}(\tilde{x})\right\|_{L^p\left(\mathbb R^d\right)}\stackrel{R\rightarrow\infty}{\longrightarrow}0
\end{equation*}
and $\widetilde{h}^\varepsilon$ is bounded uniformly in $\varepsilon$ in $L^{p'}\left(\mathbb R^d\right)$. Thanks to Young's inequality,
\begin{multline}\label{majh^epsinfty}
\left|\int_{\mathbb R^d}\frac{C}{|x-\tilde{x}|^{d-1}}1_{B(0,R)^c}(x-\tilde{x})\bigl|\widetilde{h}^\varepsilon(\tilde{x})\bigr|d\tilde{x}\right|\\
\leq
\left\|\frac{1}{|\tilde{x}|^{d-1}}1_{B(0,R)^c}(\tilde{x})\right\|_{L^p\left(\mathbb R^d\right)}\bigl\|\widetilde{h}^\varepsilon\bigr\|_{L^{p'}\left(\mathbb R^{d}\right)}\stackrel{R\rightarrow\infty}{\longrightarrow}0
\end{multline}
uniformly in $x\in\Omega_n$. The r.h.s. of \eqref{majh^epsinfty} can be made less than $\|f\|_{W^{1,\frac{d}{2}+\delta}\left(\Omega_n\right)}$ for an $R>0$ large enough. We now carry out the analysis of the integral on $|x-\tilde{x}|<R$. An adequate choice of the exponents in Young's inequality leads to \eqref{estuepspropa}. Indeed, for all $1\leq q<\frac{d}{d-1}$, $\frac{1}{|\tilde{x}|^{d-1}}1_{B(0,R)}\in L^q\left(\mathbb R^d\right).$ From the condition $1=\frac{1}{q}+\frac{1}{q'}$ on Young exponents, one deduces $q'>d$. Yet, for all $\delta>0$, for all $q':=d+\delta$, thanks to elliptic regularity and Sobolev's injection
\begin{multline*}
\bigl\|\widetilde{h}^\varepsilon\bigr\|_{L^{q'}\left(\Omega_n\right)}\leq C\left\|\nabla^2u^0\right\|_{L^{q'}\left(\Omega_n\right)}\leq C\left\|\nabla^2u^0\right\|_{W^{1,\frac{d}{2}+\delta}\left(\Omega_n\right)}\\\leq C\left\|u^0\right\|_{W^{3,\frac{d}{2}+\delta}\left(\Omega_n\right)}\leq C\left\|f\right\|_{W^{1,\frac{d}{2}+\delta}\left(\Omega_n\right)}.
\end{multline*}
Young's inequality finally gives
\begin{equation*}
\int_{\mathbb R^d}\frac{1}{|x-\tilde{x}|^{d-1}}1_{B(0,R)}(x-\tilde{x})\bigl|\widetilde{h}^\varepsilon(\tilde{x})\bigr|d\tilde{x}\leq C\left\|f\right\|_{W^{1,\frac{d}{2}+\delta}(\Omega_n)}.
\end{equation*}
The reasoning for $\widetilde{g}^\varepsilon$ is almost the same, except for the exponents in Young's inequalities which have to be adapted. 

Let us briefly indicate how to treat the case $d=2$. Each term can be estimated as above, except the ones involving $g^\varepsilon$. As before, we split the integral:
\begin{multline*}
\int_{\Omega_n}G^\varepsilon(x,\tilde{x})g^\varepsilon(\tilde{x})d\tilde{x}=\int_{\Omega_n}G^\varepsilon(x,\tilde{x})1_{D(0,R)}(x-\tilde{x})g^\varepsilon(\tilde{x})d\tilde{x}\\
+\int_{\Omega_n}G^\varepsilon(x,\tilde{x})1_{D(0,R)^c}(x-\tilde{x})g^\varepsilon(\tilde{x})d\tilde{x}.
\end{multline*}
For $x$ close to $\tilde{x}$ we bound $G^\varepsilon$ by \eqref{estGepsd=2}
\begin{equation*}
\int_{\Omega_n}G^\varepsilon(x,\tilde{x})1_{D(0,R)}(x-\tilde{x})g^\varepsilon(\tilde{x})d\tilde{x}\leq C\int_{\mathbb R^d}\left(\left|\logg\left|x-\tilde{x}\right|\right|+1\right)1_{B(0,R)}(x-\tilde{x})\widetilde{g}^\varepsilon(\tilde{x})d\tilde{x},
\end{equation*}
and for $|x-\tilde{x}|>R$ we have recourse to \eqref{estGepsdqcq} instead of \eqref{estGeps}
\begin{multline*}
\left|\int_{\Omega_n}G^\varepsilon(x,\tilde{x})1_{D(0,R)^c}(x-\tilde{x})g^\varepsilon(\tilde{x})d\tilde{x}\right|\leq C\int_{\mathbb R^d}\frac{|x\cdot n||\tilde{x}\cdot n|}{|x-\tilde{x}|^2}1_{D(0,R)^c}(x-\tilde{x})\left|\widetilde{g}^\varepsilon(\tilde{x})\right|d\tilde{x}\\
\leq C\int_{\mathbb R^d}\left(\frac{|(x-\tilde{x})\cdot n||\tilde{x}\cdot n|}{|x-\tilde{x}|^2}+\frac{|\tilde{x}\cdot n|^2}{|x-\tilde{x}|^2}\right)1_{B(0,R)^c}(x-\tilde{x})\left|\widetilde{g}^\varepsilon(\tilde{x})\right|d\tilde{x}\\
\leq C\int_{\mathbb R^d}\frac{1}{|x-\tilde{x}|}1_{B(0,R)^c}(x-\tilde{x})|\tilde{x}|\left|\widetilde{g}^\varepsilon(\tilde{x})\right|d\tilde{x}\\
+\int_{\mathbb R^d}\frac{1}{|x-\tilde{x}|^2}1_{B(0,R)^c}(x-\tilde{x})|\tilde{x}|^2\left|\widetilde{g}^\varepsilon(\tilde{x})\right|d\tilde{x}
\end{multline*}
and estimate these terms, uniformly in $x$ and $\varepsilon$, via Young's inequality. The bound \eqref{estuepspropa} on $r^{1,\varepsilon}_{bl}$ is shown.

As elliptic regularity and Sobolev injections imply
\begin{equation*}
\left\|\chi^\alpha\left(\frac{\cdot}{\varepsilon}\right)\partial_{x_\alpha}u^0\right\|_{L^{\infty}\left(\Omega_n\right)}\leq C\left\|\nabla u^0\right\|_{W^{2,\frac{d}{2}+\delta}\left(\Omega_n\right)}\leq C\left\|f\right\|_{W^{1,\frac{d}{2}+\delta}\left(\Omega_n\right)},
\end{equation*}
it remains to establish \eqref{estuepspropb} in order to prove \eqref{estuepspropc}. Let $x\in\Omega_n$. Poisson's representation formula for $u^{1,\varepsilon}_{bl}$ yields
\begin{equation*}
u^{1,\varepsilon}_{bl}(x)=-\int_{\partial\Omega_n}P^\varepsilon(x,\tilde{x})\chi^\alpha\left(\frac{\tilde{x}}{\varepsilon}\right)\partial_{x_\alpha}u^0(\tilde{x})d\tilde{x}.
\end{equation*}
From the bound \eqref{estPeps} on $P^\varepsilon$, one gets
\begin{align*}
\bigl|u^{1,\varepsilon}_{bl}(x)\bigr|&\leq\int_{\partial\Omega_n}\left|P^\varepsilon(x,\tilde{x})\right|\left|\chi^\alpha\left(\frac{\tilde{x}}{\varepsilon}\right)\partial_{x_\alpha}u^0(\tilde{x})\right|d\tilde{x}\\
&\leq C\int_{\partial\Omega_n}\frac{x\cdot n}{|x-\tilde{x}|^d}\left|\partial_{x_\alpha}u^0(\tilde{x})\right|d\tilde{x}\\
&\leq C\int_{\mathbb R^{d-1}}\frac{\hat{x}_d}{\left(\hat{x}_d^2+|\hat{x}'-\tilde{x}'|^2\right)^{\frac{d}{2}}}\left|\partial_{x_\alpha}u^0\left(\mathrm{M}\left(\tilde{x}',0\right)\right)\right|d\tilde{x}'\\
&\leq C\int_{\mathbb R^{d-1}}\frac{1}{\left(1+\left|\tilde{x}'\right|^2\right)^{\frac{d}{2}}}\left|\partial_{x_\alpha}u^0\left(\mathrm{M}\left(\hat{x}'-\hat{x}_d\tilde{x}',0\right)\right)\right|d\tilde{x}'\\ 
&\leq C\left\|\nabla u^0\right\|_{L^\infty\left(\Omega_n\right)}\int_{\mathbb R^{d-1}}\frac{1}{\left(1+|\tilde{x}'|^2\right)^\frac{d}{2}}d\tilde{x}'\\
&\leq C\left\|\nabla u^0\right\|_{W^{2,\frac{d}{2}+\delta}\left(\Omega_n\right)}\leq C\left\|f\right\|_{W^{1,\frac{d}{2}+\delta}\left(\Omega_n\right)},
\end{align*}
with $\hat{x}:=\mathrm{M}^Tx$. This establishes \eqref{estuepspropa} and proposition \ref{propestuepspropa}.

\selectlanguage{english}

\pagestyle{plain}

\section{Asymptotic expansion of Poisson's kernel}
\label{secexp}

We intend to get the asymptotics of $P^\varepsilon=P^\varepsilon(x,\tilde{x})$, defined by \eqref{exprPeps}, for $\tilde{x}$ in a neighbourhood of the boundary $\partial\Omega_n$ and $|x-\tilde{x}|$ close to $1$. Our method is in three steps:
\begin{enumerate}
\item homogenization of $u^\varepsilon$ solution of \eqref{sysueps} for $f\in C^\infty_c\left(\overline{\Omega_n}\right)$;
\item expansion of $G^\varepsilon$, thanks to a duality argument;
\item expansion of $P^\varepsilon$ via \eqref{exprPeps} and the expansion for $G^\varepsilon$.
\end{enumerate}
The first point has been the purpose of section \ref{secdual}. We now turn to the second point.

\subsection{Back to Green's kernel}
\label{secback}

\begin{prop}\label{propasGeps0}
For all $0<\kappa<\frac{1}{d}$, there exists $C_\kappa>0$, such that for all $\varepsilon>0$, for all $x,\ \tilde{x}\in\overline{\Omega_n}$, $\frac{1}{4}\leq|x-\tilde{x}|\leq 4$ implies
\begin{equation*}
\left|G^\varepsilon(x,\tilde{x})-G^0(x,\tilde{x})\right|\leq C_\kappa\varepsilon^\kappa.
\end{equation*}
\end{prop}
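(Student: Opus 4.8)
The plan is to prove Proposition~\ref{propasGeps0} by a duality argument that turns the $L^\infty$ homogenization estimate \eqref{estuepspropc} into a pointwise comparison of Green's kernels. Fix $x,\tilde{x}\in\overline{\Omega_n}$ with $\tfrac14\le|x-\tilde{x}|\le4$. First I would dispose of the boundary layer: since $|x-\tilde{x}|\le4$ forces $\max(x\cdot n,\tilde{x}\cdot n)\le5$ as soon as $\min(x\cdot n,\tilde{x}\cdot n)\le\varepsilon^\kappa$, the bound \eqref{estGepsdqcq} and its analogue for $G^0$, together with $|x-\tilde{x}|\ge\tfrac14$, give $|G^\varepsilon(x,\tilde{x})|+|G^0(x,\tilde{x})|\le C\varepsilon^\kappa$ whenever $\min(x\cdot n,\tilde{x}\cdot n)\le\varepsilon^\kappa$, so the claim holds trivially in that regime. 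From now on assume $x\cdot n,\tilde{x}\cdot n>\varepsilon^\kappa$; one may also assume $\varepsilon$ below a threshold $\varepsilon_0(\kappa)$, the range $\varepsilon\ge\varepsilon_0$ being covered by the crude bound $|G^\varepsilon-G^0|\le C\le C\varepsilon_0^{-\kappa}\varepsilon^\kappa$ coming from \eqref{estGeps}--\eqref{estGepsd=2}.

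Next, for a parameter $0<\rho<\varepsilon^\kappa$ (so that $B(\tilde{x},\rho)\Subset\Omega_n$ and $\operatorname{dist}(x,B(\tilde{x},\rho))\ge\tfrac18$), take a mollifier $f_\rho(\tilde{y}):=\rho^{-d}\varphi\big((\tilde{y}-\tilde{x})/\rho\big)$ with $\varphi\in C^\infty_c(B(0,1))$, $\int\varphi=1$; then $f_\rho\in C^\infty_c(\Omega_n)$, and by scaling $\|f_\rho\|_{W^{1,\frac{d}{2}+\delta}(\Omega_n)}\le C\rho^{-(d-1+\eta)}$, where $\eta=\eta(\delta)>0$ and $\eta(\delta)\to0$ as $\delta\to0$. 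Let $u^\varepsilon$, $u^0$ be the solutions of \eqref{sysueps}, \eqref{sysu0} with $f=f_\rho$; by Green's representation formula $u^\varepsilon(x)=\int_{\Omega_n}G^\varepsilon(x,\tilde{y})f_\rho(\tilde{y})\,d\tilde{y}$ and similarly with $G^0$, so \eqref{estuepspropc} gives
\begin{equation*}
\Big|\int_{\Omega_n}\big(G^\varepsilon(x,\tilde{y})-G^0(x,\tilde{y})\big)f_\rho(\tilde{y})\,d\tilde{y}\Big|=\big|u^\varepsilon(x)-u^0(x)\big|\le C\varepsilon\rho^{-(d-1+\eta)}.
\end{equation*}
On the other hand, since $B(\tilde{x},\rho)$ stays at distance $\ge\tfrac18$ from $x$, the gradient bound \eqref{estnablaGeps} and the derivative bound in \eqref{estG0lambda} yield $|\nabla_{\tilde{y}}G^\varepsilon(x,\tilde{y})|+|\nabla_{\tilde{y}}G^0(x,\tilde{y})|\le C$ on $B(\tilde{x},\rho)$, uniformly in $\varepsilon$; hence replacing the integrals by the pointwise values $G^\varepsilon(x,\tilde{x})$, $G^0(x,\tilde{x})$ costs at most $C\rho$. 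Combining,
\begin{equation*}
\big|G^\varepsilon(x,\tilde{x})-G^0(x,\tilde{x})\big|\le C\rho+C\varepsilon\rho^{-(d-1+\eta)},
\end{equation*}
and optimizing with $\rho=\varepsilon^{1/(d+\eta)}$ (which satisfies $\rho<\varepsilon^\kappa$ once $1/(d+\eta)>\kappa$ and $\varepsilon<1$) gives $|G^\varepsilon(x,\tilde{x})-G^0(x,\tilde{x})|\le C\varepsilon^{1/(d+\eta)}$. Since $\kappa<\tfrac1d$, choosing $\delta>0$ small enough that $1/(d+\eta(\delta))>\kappa$ completes the proof.

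The real content here is already available: the $L^\infty$ estimate \eqref{estuepspropc}, with its explicit dependence on $\|f\|_{W^{1,\frac{d}{2}+\delta}}$, was the point of Proposition~\ref{propestuepspropa}, and the off-diagonal kernel bounds are the rescaled Avellaneda--Lin estimates. The only delicate points in this proposition are therefore bookkeeping: (i) the boundary layer dichotomy on $x\cdot n,\tilde{x}\cdot n$, which lets the mollification ball be kept strictly inside $\Omega_n$ while staying away from the diagonal singularity of $G^\varepsilon$, and (ii) the exponent tracking --- the loss from $O(\varepsilon)$ in \eqref{estuepspropc} to $O(\varepsilon^\kappa)$ is precisely the price of the scaling $\|f_\rho\|_{W^{1,\frac{d}{2}+\delta}}\sim\rho^{-(d-1+\eta)}$, and it is exactly the restriction $\kappa<\tfrac1d$ that makes the $\rho$-optimization close. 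Everything is uniform in $\varepsilon>0$ and in $x,\tilde{x}$; the bounds $\tfrac14\le|x-\tilde{x}|\le4$ serve only to keep constants uniform and play no other role.
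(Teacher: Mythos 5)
Your argument is correct, and it rests on exactly the same key ingredient as the paper -- the $L^\infty$ homogenization estimate \eqref{estuepspropc} of proposition \ref{propestuepspropa}, fed into Green's representation formula and combined with the uniform gradient bounds \eqref{estnablaGeps}, \eqref{estG0lambda} -- but the localization is organized differently. The paper fixes $x$, looks at the supremum $\sigma_{\varepsilon,x}$ of $|G^\varepsilon(x,\cdot)-G^0(x,\cdot)|$ over the annulus, picks a point where it is attained, and uses the Lipschitz bounds plus a component-selection and sign-constancy argument to build a plateau test function at the scale $\rho_{\varepsilon,x}\sim\sigma_{\varepsilon,x}$, so the radius of the cutoff is tied to the unknown quantity itself; a test function whose support touches $\partial\Omega_n$ is allowed because proposition \ref{propestuepspropa} is stated for $f\in C^\infty_c\left(\overline{\Omega_n}\right)$. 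You instead mollify the Dirac at $\tilde{x}$ at an a priori scale $\rho$, pay a mollification error $C\rho$ via the same gradient bounds, and optimize $\rho=\varepsilon^{1/(d+\eta)}$; to keep the mollifier strictly inside $\Omega_n$ you add the boundary dichotomy based on \eqref{estGepsdqcq} (which does apply to $G^0$ as well, $A^0$ being a constant, hence periodic, elliptic tensor). This buys you a shorter argument that dispenses with the extremal point, the intermediate value theorem and lemma \ref{lemdualGeps}, at the cost of the extra dichotomy the paper never needs; the exponent bookkeeping ($\kappa<\frac1d$ coming from $\|f_\rho\|_{W^{1,\frac d2+\delta}}\sim\rho^{-(d-1)-\frac{4\delta}{d+2\delta}}$) is identical to the paper's $\varepsilon\geq C_\delta\rho^{d+\frac{4\delta}{d+2\delta}}$. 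One cosmetic point: since $G^\varepsilon$ is $M_N(\mathbb R)$-valued, your test data should be $f=f_\rho e_j$, $j=1,\ldots,N$, and the argument run entrywise on $G^\varepsilon_{ij}-G^0_{ij}$; this changes nothing else.
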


We start from \eqref{estuepspropc} and proceed by a duality method, mimicked from \cite{alinLp}, to prove the estimate on the kernels. The key is as usual Green's representation formula. Let $x\in\Omega_n$ and $\varepsilon>0$ be fixed for the rest of the proof. We look at
\begin{equation*}
\sigma_{\varepsilon,x}:=\sup_{\substack{\tilde{x}\in\Omega_n\\ \frac{1}{4}\leq|x-\tilde{x}|\leq 4}}\left|G^\varepsilon(x,\tilde{x})-G^0(x,\tilde{x})\right|.
\end{equation*}
The l.u.b. $\sigma_{\varepsilon,x}$ is reached for at least one $\tilde{x}_{\varepsilon,x}$, which may be on the boundary $\partial\Omega_n$. From \eqref{estnablaGeps}, one obtains the existence of $C_1>0$, independent from $\varepsilon$ and $x$, such that for all $\tilde{x}\in\Omega_n$, $\frac{1}{5}\leq|x-\tilde{x}|\leq 5$,
\begin{equation*}
\left|\nabla_{\tilde{x}}G^\varepsilon(x,\tilde{x})\right|+\left|\nabla_{\tilde{x}}G^0(x,\tilde{x})\right|\leq C_1.
\end{equation*}
Let $\rho_{\varepsilon,x}:=\frac{\sigma_{\varepsilon,x}}{2N^2C_1}$. One can always increase $C_1$, so that $\rho_{\varepsilon,x}<1$ and
\begin{equation*}
B\left(\tilde{x}_{\varepsilon,x},\rho_{\varepsilon,x}\right)\subset B(x,5)\setminus \overline{B}\left(x,\frac{1}{5}\right).
\end{equation*}
Then:
\begin{lem}\label{lemdualGeps}
There exists $i,\ j\in\{1,\ldots\ N\}$ such that for all $\tilde{x}\in D\left(\tilde{x}_{\varepsilon,x},\rho_{\varepsilon,x}\right)$,
\begin{equation*}
\left|G^{\varepsilon}_{ij}(x,\tilde{x})-G^0_{ij}(x,\tilde{x})\right|\geq \frac{\sigma_{\varepsilon,x}}{2N^2}.
\end{equation*}
\end{lem}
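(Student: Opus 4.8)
The plan is to argue by a simple pigeonhole principle on the matrix entries. We know that $\sigma_{\varepsilon,x}=\bigl|G^\varepsilon(x,\tilde{x}_{\varepsilon,x})-G^0(x,\tilde{x}_{\varepsilon,x})\bigr|$, where the norm $|\cdot|$ on $M_N(\mathbb R)$ is (up to a harmless constant we absorb into $C_1$) the max of the absolute values of the entries, or in any case comparable to it. Hence there exist indices $i,\ j\in\{1,\ldots\ N\}$ such that
\begin{equation*}
\left|G^\varepsilon_{ij}(x,\tilde{x}_{\varepsilon,x})-G^0_{ij}(x,\tilde{x}_{\varepsilon,x})\right|\geq \frac{\sigma_{\varepsilon,x}}{N^2}.
\end{equation*}
(If one uses a different normalization, the factor is adjusted but the scheme is identical.) This fixes the pair $(i,j)$ for the rest of the argument.

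Next I would propagate this lower bound from the single point $\tilde{x}_{\varepsilon,x}$ to the whole half-ball $D(\tilde{x}_{\varepsilon,x},\rho_{\varepsilon,x})$ by a Lipschitz/mean-value estimate. Both $\tilde{x}\mapsto G^\varepsilon_{ij}(x,\tilde{x})$ and $\tilde{x}\mapsto G^0_{ij}(x,\tilde{x})$ are $C^1$ on the annular region $B(x,5)\setminus\overline{B}(x,\tfrac{1}{5})$, which contains $B(\tilde{x}_{\varepsilon,x},\rho_{\varepsilon,x})$ by the inclusion recalled just before the lemma, and on the slightly larger region $\{\tfrac15\le|x-\tilde{x}|\le 5\}$ their gradients are bounded by $C_1$. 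Therefore, for any $\tilde{x}\in D(\tilde{x}_{\varepsilon,x},\rho_{\varepsilon,x})$, integrating along the segment from $\tilde{x}_{\varepsilon,x}$ to $\tilde{x}$ (which stays in the annulus) gives
\begin{equation*}
\left|\left(G^\varepsilon_{ij}-G^0_{ij}\right)(x,\tilde{x})-\left(G^\varepsilon_{ij}-G^0_{ij}\right)(x,\tilde{x}_{\varepsilon,x})\right|\leq \left(\sup|\nabla_{\tilde{x}}G^\varepsilon_{ij}|+\sup|\nabla_{\tilde{x}}G^0_{ij}|\right)\rho_{\varepsilon,x}\leq C_1\,\rho_{\varepsilon,x}.
\end{equation*}
By the choice $\rho_{\varepsilon,x}=\tfrac{\sigma_{\varepsilon,x}}{2N^2C_1}$ the right-hand side is at most $\tfrac{\sigma_{\varepsilon,x}}{2N^2}$, so by the triangle inequality
\begin{equation*}
\left|G^\varepsilon_{ij}(x,\tilde{x})-G^0_{ij}(x,\tilde{x})\right|\geq \frac{\sigma_{\varepsilon,x}}{N^2}-\frac{\sigma_{\varepsilon,x}}{2N^2}=\frac{\sigma_{\varepsilon,x}}{2N^2},
\end{equation*}
which is exactly the claimed bound.

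There is really no serious obstacle here: the only things to be careful about are (i) making sure the segment joining $\tilde{x}$ to $\tilde{x}_{\varepsilon,x}$ stays inside the region where the uniform gradient bound $C_1$ is valid — this is guaranteed because $B(\tilde{x}_{\varepsilon,x},\rho_{\varepsilon,x})\subset B(x,5)\setminus\overline{B}(x,\tfrac15)$ and the latter is convex-free but any point of the ball is within $\tfrac15<|x-\tilde y|<5$, and one may instead invoke the gradient bound on $\{\tfrac15\le|x-\tilde x|\le 5\}$ along a path in that set — and (ii) the bookkeeping of the constant $N^2$ coming from passing between the matrix norm $|\cdot|$ and the entrywise sup, which is precisely why one is allowed ``to increase $C_1$'' in the setup. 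The case where $\tilde{x}_{\varepsilon,x}$ or $\tilde{x}$ lies on $\partial\Omega_n$ causes no difficulty since $G^\varepsilon$ and $G^0$ together with their first $\tilde x$-derivatives extend continuously up to the boundary, as recalled after \eqref{estALinP}. This completes the proof of the lemma.
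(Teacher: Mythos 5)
Your proof is correct and follows essentially the same route as the paper: a pigeonhole choice of the entries $(i,j)$ at the maximizing point $\tilde{x}_{\varepsilon,x}$, followed by propagation to the whole half-ball via the uniform gradient bound $C_1$ on $\{\tfrac15\le|x-\tilde{x}|\le 5\}$ and the choice $C_1\rho_{\varepsilon,x}=\tfrac{\sigma_{\varepsilon,x}}{2N^2}$, concluding by the triangle inequality. The paper merely phrases this last step as a case distinction, which is the same estimate in contrapositive form.
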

\begin{proof}
From $\left|G^\varepsilon(x,\tilde{x}_{\varepsilon,x})-G^0(x,\tilde{x}_{\varepsilon,x})\right|=\sigma_{\varepsilon,x}$ it comes the existence of $i,\ j\in\{1,\ldots\ N\}$ such that
\begin{equation}\label{ineqtiroirsGepsij}
\left|G^\varepsilon_{ij}(x,\tilde{x}_{\varepsilon,x})-G^0_{ij}(x,\tilde{x}_{\varepsilon,x})\right|\geq\frac{\sigma_{\varepsilon,x}}{N^2}.
\end{equation}
The integers $i,\ j$ are now fixed such as \eqref{ineqtiroirsGepsij} holds. For all $\tilde{x}\in D\left(\tilde{x}_{\varepsilon,x},\rho_{\varepsilon,x}\right)$, either $\left|G^{\varepsilon}_{ij}(x,\tilde{x})-G^0_{ij}(x,\tilde{x})\right|\geq \frac{\sigma_{\varepsilon,x}}{2N^2}$, or $\left|G^{\varepsilon}_{ij}(x,\tilde{x})-G^0_{ij}(x,\tilde{x})\right|<\frac{\sigma_{\varepsilon,x}}{2N^2}$. Assume the latter. Then, 
\begin{align*}
&0<\frac{\sigma_{\varepsilon,x}}{N^2}-\left|G^{\varepsilon}_{ij}(x,\tilde{x})-G^0_{ij}(x,\tilde{x})\right|\\
&\leq \left|G^{\varepsilon}_{ij}(x,\tilde{x}_{\varepsilon,x})-G^0_{ij}(x,\tilde{x}_{\varepsilon,x})\right|-\left|G^{\varepsilon}_{ij}(x,\tilde{x})-G^0_{ij}(x,\tilde{x})\right|\\
&\leq \left|G^{\varepsilon}_{ij}(x,\tilde{x}_{\varepsilon,x})-G^{\varepsilon}_{ij}(x,\tilde{x})\right|+\left|G^0_{ij}(x,\tilde{x}_{\varepsilon,x})-G^0_{ij}(x,\tilde{x})\right|\\
&\leq \left[\left\|\nabla_{\tilde{x}}G^\varepsilon\right\|_{L^\infty\left(\frac{1}{5}\leq|x-\tilde{x}|\leq 5\right)}+\left\|\nabla_{\tilde{x}}G^0\right\|_{L^\infty\left(\frac{1}{5}\leq|x-\tilde{x}|\leq 5\right)}\right]\left|\tilde{x}_{\varepsilon,x}-\tilde{x}\right|\\
&\leq C_1\rho_{\varepsilon,x}=\frac{\sigma_{\varepsilon,x}}{2N^2},
\end{align*}
and finally
\begin{equation*}
\left|G^{\varepsilon}_{ij}(x,\tilde{x})-G^0_{ij}(x,\tilde{x})\right|\geq \frac{\sigma_{\varepsilon,x}}{2N^2}.\qedhere
\end{equation*}
\end{proof}

Take $\varphi\in C^\infty_c\left(B(\tilde{x}_{\varepsilon,x},1)\right)$, with $0\leq\varphi\leq 1$ and $\varphi\equiv 1$ on $B\left(\tilde{x}_{\varepsilon,x},\frac{1}{2}\right)$. Note that the support of $\varphi$ may intersect the boundary $\partial\Omega_n$. For $\rho>0$, we define $\varphi_\rho$ by
$\varphi_\rho:=\varphi\left(\frac{\cdot}{\rho}\right)\in C^\infty_c\left(B\left(\tilde{x}_{\varepsilon,x},\rho\right)\right)$; we have:
\begin{equation*}
\left\|\varphi_\rho\right\|_{L^\infty\left(\Omega_n\right)}\leq C\quad\mbox{and}\qquad \left\|\nabla\varphi_\rho\right\|_{L^\infty\left(\Omega_n\right)}\leq\frac{C}{\rho}.
\end{equation*}
For $\rho=\rho_{\varepsilon,x}$, the constants above do not depend on $\varepsilon$ or $x$.

Let $i,\ j$ the integers given by lemma \ref{lemdualGeps}. The intermediate value theorem implies that $G^\varepsilon_{ij}(x,\tilde{x})-G^0_{ij}(x,\tilde{x})$ has a constant sign on $D\left(\tilde{x}_{\varepsilon,x},\rho_{\varepsilon,x}\right)$. Up to the change of $f$ in $-f$ in what follows, one can always assume that $G^\varepsilon_{ij}(x,\tilde{x})-G^0_{ij}(x,\tilde{x})\geq 0$, which automatically yields
\begin{equation*}
G^\varepsilon_{ij}(x,\tilde{x})-G^0_{ij}(x,\tilde{x})\geq \frac{\sigma_{\varepsilon,x}}{2N^2}
\end{equation*}
for all $\tilde{x}\in D\left(\tilde{x}_{\varepsilon,x},\rho_{\varepsilon,x}\right)$. We now carry out the duality argument. For this purpose, take $f=f(\tilde{x}):=\varphi_{\rho_{\varepsilon,x}}(\tilde{x})e_j$, where $e_j$ is the $j^{th}$ vector of the canonical basis of $\mathbb R^N$ and denote by $u^\varepsilon=u^\varepsilon(\tilde{x})\in\mathbb R^N$ and $u^0=u^0(\tilde{x})\in\mathbb R^N$ the solutions of \eqref{sysueps} and \eqref{sysu0} with r.h.s. $f$. We remind that $f$, $u^\varepsilon$ as well as $u^0$ depend on $x$. Thanks to Green's representation formula,
\begin{align*}
\left[u^\varepsilon(x)-u^0(x)\right]_i&=\int_{\Omega_n}\left[\left(G^\varepsilon(x,\tilde{x})-G^0(x,\tilde{x})\right)f(\tilde{x})\right]_id\tilde{x}\\
&=\int_{D\left(\tilde{x}_{\varepsilon,x},\rho_{\varepsilon,x}\right)}\left[G^\varepsilon_{ij}(x,\tilde{x})-G^0_{ij}(x,\tilde{x})\right]\varphi_{\rho_{\varepsilon,x}}(\tilde{x})d\tilde{x}\\
&\geq \int_{D\left(\tilde{x}_{\varepsilon,x},\frac{\rho_{\varepsilon,x}}{2}\right)}G^\varepsilon_{ij}(x,\tilde{x})-G^0_{ij}(x,\tilde{x})d\tilde{x}\\
&\geq \int_{D\left(\tilde{x}_{\varepsilon,x},\frac{\rho_{\varepsilon,x}}{2}\right)}\frac{\sigma_{\varepsilon,x}}{2N^2}d\tilde{x}\geq C\rho_{\varepsilon,x}^{d+1}.
\end{align*}
Yet, we know from \eqref{estuepspropc}, an estimate of $u^\varepsilon-u^0$:
\begin{multline*}
\left\|u^\varepsilon-u^0\right\|_{L^\infty\left(\Omega_n\right)}\leq C_\delta\varepsilon\left\|f\right\|_{W^{1,\frac{d}{2}+\delta}\left(\Omega_n\right)}\\
=C_\delta\varepsilon\left[\left\|\varphi_{\rho_{\varepsilon,x}}\right\|_{L^{\frac{d}{2}+\delta}\left(\Omega_n\right)}+\left\|\nabla\varphi_{\rho_{\varepsilon,x}}\right\|_{L^{\frac{d}{2}+\delta}\left(\Omega_n\right)}\right]\leq C_\delta\varepsilon\rho_{\varepsilon,x}^{\frac{d-2\delta}{d+2\delta}}.
\end{multline*}
Putting together these bounds, we get
\begin{multline*}
C\rho_{\varepsilon,x}^{d+1}\leq \left|\left[u^\varepsilon(x)-u^0(x)\right]_i\right|\leq\left|u^\varepsilon(x)-u^0(x)\right|\leq \left\|u^\varepsilon-u^0\right\|_{L^\infty\left(\Omega_n\right)}\leq C_\delta\varepsilon\rho_{\varepsilon,x}^{\frac{d-2\delta}{d+2\delta}},
\end{multline*}
which summarizes in 
\begin{equation*}
\varepsilon\geq C_\delta\rho_{\varepsilon,x}^{-\frac{d-2\delta}{d+2\delta}+d+1}=C_\delta\rho_{\varepsilon,x}^{d+\frac{4\delta}{d+2\delta}}
\end{equation*}
for every $\delta>0$, with a constant $C_\delta$ independent from $\varepsilon$ and $x$. The inequalities 
\begin{equation*}
\sigma_{\varepsilon,x}\leq C_\delta\rho_{\varepsilon,x}\leq C_\delta\varepsilon^{\frac{1}{d+\frac{4\delta}{d+2\delta}}}
\end{equation*}
contain the asymptotic expansion of $G^\varepsilon$ at zeroth order of proposition \ref{propasGeps0}. One can follow the same reasoning as above, changing $A$ in $A^*$, to obtain for all $0<\kappa<\frac{1}{d}$, for all $x,\ \tilde{x}\in\Omega_n$, $\frac{1}{4}\leq|x-\tilde{x}|\leq 4$, 
\begin{equation}\label{estG*eps-Gp0}
\left|G^{*,\varepsilon}(x,\tilde{x})-G^{*,0}(x,\tilde{x})\right|\leq C_\kappa\varepsilon^\kappa.
\end{equation}

\subsection{Homogenization of Poisson's kernel}
\label{sechompoisson}
Let $0<\varepsilon<1$ and $x\in\Omega_n$ be fixed. Assume that $x$ is close to the boundary, say $x\cdot n<4$ to fix the ideas. According to \eqref{sysGreeneps}, $G^{*,\varepsilon}(\cdot,x)$ satisfies
\begin{equation*}
\left\{\begin{array}{rll}
-\nabla_{\tilde{x}}\cdot A^*\left(\frac{\tilde{x}}{\varepsilon}\right)\nabla_{\tilde{x}} G^{*,\varepsilon}(\tilde{x},x)&=0,& \tilde{x}\in D(x,4)\setminus \overline{D}\left(x,\frac{1}{4}\right)\\
G^{*,\varepsilon}(\tilde{x},x)&=0,& \tilde{x}\in \Gamma(x,4)\setminus \overline{\Gamma}\left(x,\frac{1}{4}\right)
\end{array}\right.
\end{equation*}
This leads to the idea that one can apply theorem \ref{theolocalboundarygradientueps} to an expansion of $G^{*,\varepsilon}$ for which a local estimate in $L^\infty$ is known. Doing so, one has to handle carefully with the trace on $\Gamma(x,4)\setminus\overline{\Gamma}\left(x,\frac{1}{4}\right)$. Take for example $Z^{*,\varepsilon,x}=Z^{*,\varepsilon,x}(\tilde{x})\in M_N(\mathbb R)$ defined for all $\tilde{x}\in D(x,4)\setminus\overline{D}\left(x,\frac{1}{4}\right)$ by
\begin{multline*}
Z^{*,\varepsilon,x}(\tilde{x}):=G^{*,\varepsilon}(\tilde{x},x)-G^{*,0}(\tilde{x},x)-\varepsilon\chi^{*,\alpha}\left(\frac{\tilde{x}}{\varepsilon}\right)\partial_{\tilde{x}_\alpha}G^{*,0}(\tilde{x},x)\\
-\varepsilon^2\Gamma^{*,\alpha\beta}\left(\frac{\tilde{x}}{\varepsilon}\right)\partial_{\tilde{x}_\alpha}\partial_{\tilde{x}_\beta}G^{*,0}(\tilde{x},x).
\end{multline*}
Then, for all $0<\nu<1$,
\begin{align*}
&\left\|Z^{*,\varepsilon,x}\right\|_{C^{1,\nu}\left(\Gamma(x,4)\setminus\overline{\Gamma}\left(x,\frac{1}{4}\right)\right)}\\
&=\!\varepsilon\left\|\chi^{*,\alpha}\left(\frac{\tilde{x}}{\varepsilon}\right)\partial_{\tilde{x}_\alpha}G^{*,0}(\tilde{x},x)+\varepsilon\Gamma^{*,\alpha\beta}\left(\frac{\tilde{x}}{\varepsilon}\right)\partial_{\tilde{x}_\alpha}\partial_{\tilde{x}_\beta}G^{*,0}(\tilde{x},x)\right\|_{C^{1,\nu}\left(\Gamma(x,4)\setminus\overline{\Gamma}\left(x,\frac{1}{4}\right)\right)}\!\!=O\left(\varepsilon^{-\nu}\right)
\end{align*}
which worsens our estimates. One way of getting around this difficulty is again to introduce a boundary layer term in the expansion \label{defzepsx}. This term has to cancel the trace on the boundary due to the first-order term.

For all $\gamma\in\{1,\ldots\ d\}$, let $G^{*,1,\gamma}_{bl}=G^{*,1,\gamma}_{bl}(\tilde{y})\in M_N(\mathbb R)$ be the solution of
\begin{equation*}
\left\{
\begin{array}{rll}
-\nabla_{\tilde{y}}\cdot A^*(\tilde{y})\nabla_{\tilde{y}}G^{*,1,\gamma}_{bl}&=0,&\tilde{y}\in\Omega_n\\
G^{*,1,\gamma}_{bl}&=-\chi^{*,\gamma}(\tilde{y}),&\tilde{y}\in\partial\Omega_n
\end{array}
\right. 
\end{equation*}
in the sense of theorem \ref{theodecaydiv} or proposition \ref{solreprgreen}, both being identical according to theorem \ref{theov=w}. We then define the boundary layer $G^{*,1,\varepsilon}_{bl}=G^{*,1,\varepsilon}_{bl}(\tilde{x},x)\in M_N(\mathbb R)$ at first order in $\varepsilon$ by
\begin{equation*}
G^{*,1,\varepsilon}_{bl}(\tilde{x},x)=G^{*,1,\gamma}_{bl}\left(\frac{\tilde{x}}{\varepsilon}\right)\partial_{\tilde{x}_\gamma}G^{*,0}(\tilde{x},x)
\end{equation*}
for all $\tilde{x}\in\Omega_n$, $\tilde{x}\neq x$. Instead of $Z^{*,\varepsilon,x}$ one focuses now on $Z^{*,\varepsilon,x}_{bl}=Z^{*,\varepsilon,x}_{bl}(\tilde{x})\in M_N(\mathbb R)$ such that
\begin{multline*}
Z^{*,\varepsilon,x}_{bl}(\tilde{x}):=G^{*,\varepsilon}(\tilde{x},x)-G^{*,0}(\tilde{x},x)-\varepsilon\chi^{*,\alpha}\left(\frac{\tilde{x}}{\varepsilon}\right)\partial_{\tilde{x}_\alpha}G^{*,0}(\tilde{x},x)-\varepsilon G^{*,1,\varepsilon}_{bl}(\tilde{x},x)\\
-\varepsilon^2\Gamma^{*,\alpha\beta}\left(\frac{\tilde{x}}{\varepsilon}\right)\partial_{\tilde{x}_\alpha}\partial_{\tilde{x}_\beta}G^{*,0}(\tilde{x},x)-\varepsilon^2\chi^{*,\alpha}\left(\frac{\tilde{x}}{\varepsilon}\right)G^{*,1,\beta}_{bl}\left(\frac{\tilde{x}}{\varepsilon}\right)\partial_{\tilde{x}_\alpha}\partial_{\tilde{x}_\beta}G^{*,0}(\tilde{x},x)
\end{multline*}
for all $\tilde{x}\in D(x,4)\setminus\overline{D}\left(x,\frac{1}{4}\right)$. The method is now similar to the one, which led to the estimate on $r^{1,\varepsilon}_{bl}$: $Z^{*,\varepsilon,x}_{bl}$ satisfies
\begin{equation*}
\left\{
\begin{array}{rll}
-\nabla\cdot A^*\left(\frac{\tilde{x}}{\varepsilon}\right)\nabla Z^{*,\varepsilon,x}_{bl}&=F^\varepsilon+F^\varepsilon_{bl},&\tilde{x}\in D(x,4)\setminus\overline{D}\left(x,\frac{1}{4}\right)\\
Z^{*,\varepsilon,x}_{bl}&=-\varepsilon^2\Gamma^{*,\alpha\beta}\left(\frac{\tilde{x}}{\varepsilon}\right)\partial_{\tilde{x}_\alpha}\partial_{\tilde{x}_\beta}G^{*,0}(\tilde{x},x),&\tilde{x}\in \Gamma(x,4)\setminus\overline{\Gamma}\left(x,\frac{1}{4}\right)
\end{array}
\right. 
\end{equation*}
where 
\begin{multline*}
F^\varepsilon:=\nabla_{\tilde{x}}\cdot A^*\left(\frac{\tilde{x}}{\varepsilon}\right)\nabla_{\tilde{x}} G^{*,0}(\tilde{x},x)+\varepsilon\nabla_{\tilde{x}}\cdot A^*\left(\frac{\tilde{x}}{\varepsilon}\right)\nabla_{\tilde{x}}\left(\chi^{*,\alpha}\left(\frac{\tilde{x}}{\varepsilon}\right)\partial_{\tilde{x}_\alpha}G^{*,0}(\tilde{x},x)\right)\\
+\varepsilon^2\nabla_{\tilde{x}}\cdot A^*\left(\frac{\tilde{x}}{\varepsilon}\right)\nabla_{\tilde{x}}\left(\Gamma^{*,\alpha\beta}\left(\frac{\tilde{x}}{\varepsilon}\right)\partial_{\tilde{x}_\alpha}\partial_{\tilde{x}_\beta}G^{*,0}(\tilde{x},x)\right)
\end{multline*}
and 
\begin{multline}\label{Fepsbl}
F^\varepsilon_{bl}:=\varepsilon\nabla_{\tilde{x}}\cdot A^*\left(\frac{\tilde{x}}{\varepsilon}\right)\nabla_{\tilde{x}}G^{*,1,\varepsilon}_{bl}(\tilde{x})\\+\varepsilon^2\nabla_{\tilde{x}}\cdot A^*\left(\frac{\tilde{x}}{\varepsilon}\right)\nabla_{\tilde{x}}\left(\chi^{*,\alpha}\left(\frac{\tilde{x}}{\varepsilon}\right)G^{*,1,\beta}_{bl}\left(\frac{\tilde{x}}{\varepsilon}\right)\partial_{\tilde{x}_\alpha}\partial_{\tilde{x}_\beta}G^{*,0}(\tilde{x},x)\right). 
\end{multline}
Applying \eqref{boundarygradest}, one obtains
\begin{multline}\label{boundarygradestZ*bl}
\left\|\nabla Z^{*,\varepsilon,x}_{bl}\right\|_{L^\infty\left(D\left(x,3\right)\setminus \overline{D}\left(x,\frac{1}{3}\right)\right)}\leq C\Biggl[\left\|Z^{*,\varepsilon,x}_{bl}\right\|_{L^\infty\left(D\left(x,4\right)\setminus \overline{D}\left(x,\frac{1}{4}\right)\right)}\Biggr.\\
+\left\|F^\varepsilon\right\|_{L^{d+\delta}\left(D\left(x,4\right)\setminus \overline{D}\left(x,\frac{1}{4}\right)\right)}+\left\|F^\varepsilon_{bl}\right\|_{L^{d+\delta}\left(D\left(x,4\right)\setminus \overline{D}\left(x,\frac{1}{4}\right)\right)}\\
\Biggl.+\left\|\varepsilon^2\Gamma^{*,\alpha\beta}\left(\frac{\tilde{x}}{\varepsilon}\right)\partial_{\tilde{x}_\alpha}\partial_{\tilde{x}_\beta}G^{*,0}(\tilde{x},x)\right\|_{C^{1,\nu}\left(\Gamma\left(x,4\right)\setminus \overline{\Gamma}\left(x,\frac{1}{4}\right)\right)}\Biggr].
\end{multline}

From \eqref{estG*eps-Gp0} and the boundedness of $G^{*,1,\gamma}_{bl}$, one immediately obtains 
\begin{equation}\label{estZ*blLinfty}
\left\|Z^{*,\varepsilon,x}_{bl}\right\|_{L^\infty\left(D\left(x,4\right)\setminus \overline{D}\left(x,\frac{1}{4}\right)\right)}=O\left(\varepsilon^\frac{1}{d}\right).
\end{equation}
Now, for $0<\nu<1$,
\begin{multline}\label{estZ*blboundary}
\left\|Z^{*,\varepsilon,x}_{bl}\right\|_{C^{1,\nu}\left(\Gamma\left(x,4\right)\setminus \overline{\Gamma}\left(x,\frac{1}{4}\right)\right)}\\
=\varepsilon^2\left\|\Gamma^{*,\alpha\beta}\left(\frac{\tilde{x}}{\varepsilon}\right)\partial_{\tilde{x}_\alpha}\partial_{\tilde{x}_\beta}G^{*,0}(\tilde{x},x)\right\|_{C^{1,\nu}\left(\Gamma\left(x,4\right)\setminus \overline{\Gamma}\left(x,\frac{1}{4}\right)\right)}=O\left(\varepsilon^{1-\nu}\right).
\end{multline}
Expanding $F^\varepsilon$ in powers of $\varepsilon$, one notices that the terms of order $-1$ and $0$ in $\varepsilon$ cancel and that for all $\tilde{x}\in D(x,4)\setminus\overline{D}\left(x,\frac{1}{4}\right)$
\begin{align*}
F^\varepsilon(\tilde{x})&=\varepsilon\Bigl[\nabla_{\tilde{x}}\cdot A^*(\tilde{y})\nabla_{\tilde{x}}\left(\chi^{*,\alpha}(\tilde{y})\partial_{\tilde{x}_\alpha}G^{*,0}(\tilde{x},x)\right)\Bigr.\\
&\qquad+\nabla_{\tilde{x}}\cdot A^*(\tilde{y})\nabla_{\tilde{y}}\left(\Gamma^{*,\alpha\beta}(\tilde{y})\partial_{\tilde{x}_\alpha}\partial_{\tilde{x}_\beta}G^{*,0}(\tilde{x},x)\right)\\
&\qquad\qquad+\Bigl.\nabla_{\tilde{y}}\cdot A^*(\tilde{y})\nabla_{\tilde{x}}\left(\Gamma^{*,\alpha\beta}(\tilde{y})\partial_{\tilde{x}_\alpha}\partial_{\tilde{x}_\beta}G^{*,0}(\tilde{x},x)\right)\Bigr]\left(\tilde{x},\frac{\tilde{x}}{\varepsilon}\right)\\
&\qquad\qquad\qquad+\varepsilon^2\left[\nabla_{\tilde{x}}\cdot A^*(\tilde{y})\nabla_{\tilde{x}}\left(\Gamma^{*,\alpha\beta}(\tilde{y})\partial_{\tilde{x}_\alpha}\partial_{\tilde{x}_\beta}G^{*,0}(\tilde{x},x)\right)\right]\left(\tilde{x},\frac{\tilde{x}}{\varepsilon}\right).
\end{align*}
This demonstrates that $F^\varepsilon$ is $O\left(\varepsilon\right)$ in $L^{d+\delta}\left(D\left(x,4\right)\setminus \overline{D}\left(x,\frac{1}{4}\right)\right)$. 

The source term $F^\varepsilon_{bl}$ due to the boundary layer deserves more attention. The expansion of $F^\varepsilon_{bl}$ in powers of $\varepsilon$ is quite heavy. For the first term in the r.h.s. of \eqref{Fepsbl} we get
\begin{subequations}
\begin{align}
&\varepsilon\nabla_{\tilde{x}}\cdot A^*\left(\frac{\tilde{x}}{\varepsilon}\right)\nabla_{\tilde{x}}\left(G^{*,1,\gamma}_{bl}\left(\frac{\tilde{x}}{\varepsilon}\right)\partial_{\tilde{x}_\gamma}G^{*,0}(\tilde{x},x)\right)\nonumber\\
&=\partial_{\tilde{y}_\alpha}A^{*,\alpha\beta}\left(\frac{\tilde{x}}{\varepsilon}\right)G^{*,1,\gamma}_{bl}\left(\frac{\tilde{x}}{\varepsilon}\right)\partial_{\tilde{x}_\beta}\partial_{\tilde{x}_\gamma}G^{*,0}(\tilde{x},x)\label{Fepsbl12}\\
&\qquad+A^{*,\alpha\beta}\left(\frac{\tilde{x}}{\varepsilon}\right)\partial_{\tilde{y}_\beta}G^{*,1,\gamma}_{bl}\left(\frac{\tilde{x}}{\varepsilon}\right)\partial_{\tilde{x}_\alpha}\partial_{\tilde{x}_\gamma}G^{*,0}(\tilde{x},x)\label{Fepsbl11}\\
&\qquad\qquad+A^{*,\alpha\beta}\left(\frac{\tilde{x}}{\varepsilon}\right)\partial_{\tilde{y}_\alpha}G^{*,1,\gamma}_{bl}\left(\frac{\tilde{x}}{\varepsilon}\right)\partial_{\tilde{x}_\beta}\partial_{\tilde{x}_\gamma}G^{*,0}(\tilde{x},x)\label{Fepsbl13}\\
&\qquad\qquad\qquad+\varepsilon A^{*,\alpha,\beta}\left(\frac{\tilde{x}}{\varepsilon}\right)G^{*,1,\gamma}_{bl}\left(\frac{\tilde{x}}{\varepsilon}\right)\partial_{\tilde{x}_\alpha}\partial_{\tilde{x}_\beta}\partial_{\tilde{x}_\gamma}G^{*,0}(\tilde{x},x)\label{Fepsbl14}.
\end{align}
\end{subequations}
We write the second term in the r.h.s. of \eqref{Fepsbl} as a sum of three terms
\begin{multline*}
\varepsilon^2\nabla_{\tilde{x}}\cdot A^*\left(\frac{\tilde{x}}{\varepsilon}\right)\nabla_{\tilde{x}}\left(\chi^{*,\gamma}\left(\frac{\tilde{x}}{\varepsilon}\right)G^{*,1,\eta}_{bl}\left(\frac{\tilde{x}}{\varepsilon}\right)\partial_{\tilde{x}_\gamma}\partial_{\tilde{x}_\eta}G^{*,0}(\tilde{x},x)\right)\\
=T^0\left(\tilde{x},x\right)+\varepsilon T^1\left(\tilde{x},x\right)+\varepsilon^2 T^2\left(\tilde{x},x\right),
\end{multline*}
with at order $\varepsilon^0$
\begin{subequations}
\begin{align}
T^0\left(\tilde{x},x\right)&:=\left[\partial_{\tilde{y}_\alpha}\left(A^{*,\alpha\beta}(\tilde{y})\partial_{\tilde{y}_\beta}\chi^{*,\gamma}(\tilde{y})\right)G^{*,1,\eta}_{bl}(\tilde{y})\right]\left(\frac{\tilde{x}}{\varepsilon}\right)\partial_{\tilde{x}_\gamma}\partial_{\tilde{x}_\eta} G^{*,0}(\tilde{x},x)\label{Fepsbl21}\\
&\qquad+\left[A^{*,\alpha\beta}(\tilde{y})\partial_{\tilde{y}_\beta}\chi^{*,\gamma}(\tilde{y})\partial_{\tilde{y}_\alpha}G^{*,1,\eta}_{bl}(\tilde{y})\right]\left(\frac{\tilde{x}}{\varepsilon}\right)\partial_{\tilde{x}_\gamma}\partial_{\tilde{x}_\eta} G^{*,0}(\tilde{x},x)\label{Fepsbl22}\\
&\qquad\qquad+\left[\partial_{\tilde{y}_\alpha}\left(A^{*,\alpha\beta}(\tilde{y})\chi^{*,\gamma}(\tilde{y})\right)\partial_{\tilde{y}_\beta}G^{*,1,\eta}_{bl}(\tilde{y})\right]\left(\frac{\tilde{x}}{\varepsilon}\right)\partial_{\tilde{x}_\gamma}\partial_{\tilde{x}_\eta} G^{*,0}(\tilde{x},x)\label{Fepsbl23}\\
&\qquad\qquad\qquad+\left[A^{*,\alpha\beta}(\tilde{y})\chi^{*,\gamma}(\tilde{y})\partial_{\tilde{y}_\alpha}\partial_{\tilde{y}_\beta}G^{*,1,\eta}_{bl}(\tilde{y})\right]\left(\frac{\tilde{x}}{\varepsilon}\right)\partial_{\tilde{x}_\gamma}\partial_{\tilde{x}_\eta} G^{*,0}(\tilde{x},x)\label{Fepsbl24},
\end{align}
at order $\varepsilon^1$
\begin{align}
T^1\left(\tilde{x},x\right)&:=\left[A^{*,\alpha\beta}(\tilde{y})\partial_{\tilde{y}_\beta}\chi^{*,\gamma}(\tilde{y})G^{*,1,\eta}_{bl}(\tilde{y})\right]\left(\frac{\tilde{x}}{\varepsilon}\right)\partial_{\tilde{x}_\alpha}\partial_{\tilde{x}_\gamma}\partial_{\tilde{x}_\eta} G^{*,0}(\tilde{x},x)\label{Fepsbl25}\\
&\qquad+\left[A^{*,\alpha\beta}(\tilde{y})\chi^{*,\gamma}(\tilde{y})\partial_{\tilde{y}_\beta}G^{*,1,\eta}_{bl}(\tilde{y})\right]\left(\frac{\tilde{x}}{\varepsilon}\right)\partial_{\tilde{x}_\alpha}\partial_{\tilde{x}_\gamma}\partial_{\tilde{x}_\eta} G^{*,0}(\tilde{x},x)\label{Fepsbl26}\\
&\qquad\qquad+\left[\partial_{\tilde{y}_\alpha}\left(A^{*,\alpha\beta}(\tilde{y})\chi^{*,\gamma}(\tilde{y})\right)G^{*,1,\eta}_{bl}(\tilde{y})\right]\left(\frac{\tilde{x}}{\varepsilon}\right)\partial_{\tilde{x}_\beta}\partial_{\tilde{x}_\gamma}\partial_{\tilde{x}_\eta} G^{*,0}(\tilde{x},x)\label{Fepsbl27}\\
&\qquad\qquad\qquad+\left[A^{*,\alpha\beta}(\tilde{y})\chi^{*,\gamma}(\tilde{y})\partial_{\tilde{y}_\alpha}G^{*,1,\eta}_{bl}(\tilde{y})\right]\left(\frac{\tilde{x}}{\varepsilon}\right)\partial_{\tilde{x}_\beta}\partial_{\tilde{x}_\gamma}\partial_{\tilde{x}_\eta} G^{*,0}(\tilde{x},x),\label{Fepsbl28}
\end{align}
and at order $\varepsilon^2$
\begin{align}
T^2\left(\tilde{x},x\right)&:=\left[A^{*,\alpha\beta}(\tilde{y})\chi^{*,\gamma}(\tilde{y})G^{*,1,\eta}_{bl}(\tilde{y})\right]\left(\frac{\tilde{x}}{\varepsilon}\right)\partial_{\tilde{x}_\alpha}\partial_{\tilde{x}_\beta}\partial_{\tilde{x}_\gamma}\partial_{\tilde{x}_\eta} G^{*,0}(\tilde{x},x)\label{Fepsbl29}.
\end{align}
\end{subequations}
We intend to show that all these terms are $O\left(\varepsilon^\kappa\right)$, with $\kappa>0$, in $L^{d+\delta}\left(D\left(x,4\right)\setminus \overline{D}\left(x,\frac{1}{4}\right)\right)$. This seems tricky for some terms in the expansion above. Indeed, \eqref{Fepsbl12} and \eqref{Fepsbl21} are of order $O(1)$, as we do not know more than $G^{*,1,\eta}_{bl}\in L^\infty(\Omega_n)$. However, the sum of \eqref{Fepsbl12} and \eqref{Fepsbl21} cancels:
\begin{multline*}
\left[\partial_{\tilde{y}_\alpha}\left(A^{*,\alpha\beta}(\tilde{y})\partial_{\tilde{y}_\beta}\chi^{*,\gamma}(\tilde{y})\right)G^{*,1,\eta}_{bl}(\tilde{y})\right]\left(\frac{\tilde{x}}{\varepsilon}\right)\partial_{\tilde{x}_\gamma}\partial_{\tilde{x}_\eta} G^{*,0}(\tilde{x},x)\\
=\left[\partial_{\tilde{y}_\alpha}\left(A^{*,\alpha\eta}(\tilde{y})\partial_{\tilde{y}_\eta}\chi^{*,\beta}(\tilde{y})\right)G^{*,1,\eta}_{bl}(\tilde{y})\right]\left(\frac{\tilde{x}}{\varepsilon}\right)\partial_{\tilde{x}_\beta}\partial_{\tilde{x}_\gamma} G^{*,0}(\tilde{x},x)\\
=-\partial_{\tilde{y}_\alpha}A^{*,\alpha\beta}\left(\frac{\tilde{x}}{\varepsilon}\right)G^{*,1,\eta}_{bl}\left(\frac{\tilde{x}}{\varepsilon}\right)\partial_{\tilde{x}_\beta}\partial_{\tilde{x}_\gamma} G^{*,0}(\tilde{x},x).
\end{multline*}
For the remaining terms, we use either \eqref{ineqintnormalderivees}, if at least one derivative of $G_{bl}^{*,1,\eta}$ is involved, or \eqref{estcroissuLinfty} if not. Therefore we proceed in the same manner for \eqref{Fepsbl11}, \eqref{Fepsbl13}, \eqref{Fepsbl22}, \eqref{Fepsbl23}, \eqref{Fepsbl24}, \eqref{Fepsbl26}, \eqref{Fepsbl28} on the one hand, and \eqref{Fepsbl14}, \eqref{Fepsbl25}, \eqref{Fepsbl27}, \eqref{Fepsbl29} on the other hand. Let us estimate \eqref{Fepsbl11} in $L^{d+\delta}\left(D\left(x,4\right)\setminus \overline{D}\left(x,\frac{1}{4}\right)\right)$: by \eqref{ineqintnormalderivees} with $k'=1$,
\begin{align*}
&\left\|A^{*,\alpha\beta}\left(\frac{\tilde{x}}{\varepsilon}\right)\partial_{\tilde{y}_\beta}G^{*,1,\gamma}_{bl}\left(\frac{\tilde{x}}{\varepsilon}\right)\partial_{\tilde{x}_\alpha}\partial_{\tilde{x}_\gamma}G^{*,0}(\tilde{x},x)\right\|_{L^{1}\left(D\left(x,4\right)\setminus \overline{D}\left(x,\frac{1}{4}\right)\right)}\\
&\quad\leq\left(\int_{D\left(x,4\right)\setminus \overline{D}\left(x,\frac{1}{4}\right)}\left|\nabla_{\tilde{y}}G^{*,1,\gamma}_{bl}\left(\frac{\tilde{x}}{\varepsilon}\right)\right|d\tilde{x}\right)\\
&\quad\qquad\leq C\left(\int_0^8\sup_{(\tilde{z}_1,\ldots\tilde{z}_{d-1})\in\mathbb R^{d-1}}\left|\nabla_{\tilde{y}}G^{*,1,\gamma}_{bl}\left(\mathrm{M}\left(\tilde{z}_1,\ldots\tilde{z}_{d-1},\frac{t}{\varepsilon}\right)\right)\right|dt\right)\\
&\quad\qquad\qquad\leq C\left(\int_0^8\sup_{(\tilde{z}_1,\ldots\tilde{z}_{d-1})\in\mathbb R^{d-1}}\left|\nabla_{\tilde{y}}G^{*,1,\gamma}_{bl}\left(\mathrm{M}\left(\tilde{z}_1,\ldots\tilde{z}_{d-1},\frac{t}{\varepsilon}\right)\right)\right|^2dt\right)^\frac{1}{2}\\
&\quad\qquad\qquad\qquad\leq C\varepsilon^\frac{1}{2}\left(\int_0^\infty\sup_{(\tilde{z}_1,\ldots\tilde{z}_{d-1})\in\mathbb R^{d-1}}\left|\nabla_{\tilde{y}}G^{*,1,\gamma}_{bl}\left(\mathrm{M}\left(\tilde{z}_1,\ldots\tilde{z}_{d-1},t\right)\right)\right|^2dt\right)^\frac{1}{2}\leq C\varepsilon^\frac{1}{2}
\end{align*}
and 
\begin{equation*}
\left\|A^{*,\alpha\beta}\left(\frac{\tilde{x}}{\varepsilon}\right)\partial_{\tilde{y}_\beta}G^{*,1,\gamma}_{bl}\left(\frac{\tilde{x}}{\varepsilon}\right)\partial_{\tilde{x}_\alpha}\partial_{\tilde{x}_\gamma}G^{*,0}(\tilde{x},x)\right\|_{L^{\infty}\left(D\left(x,4\right)\setminus \overline{D}\left(x,\frac{1}{4}\right)\right)}=O(1)
\end{equation*}
from which we get, by interpolation,
\begin{equation*}
\left\|A^{*,\alpha\beta}\left(\frac{\tilde{x}}{\varepsilon}\right)\partial_{\tilde{y}_\beta}G^{*,1,\gamma}_{bl}\left(\frac{\tilde{x}}{\varepsilon}\right)\partial_{\tilde{x}_\alpha}\partial_{\tilde{x}_\gamma}G^{*,0}(\tilde{x},x)\right\|_{L^{d+\delta}\left(D\left(x,4\right)\setminus \overline{D}\left(x,\frac{1}{4}\right)\right)}=O\left(\varepsilon^\frac{1}{2(d+\delta)}\right).
\end{equation*}
We have gained a positive power of $\varepsilon$ for all corresponding terms listed above. In the other cases, the use of \eqref{estcroissuLinfty} deteriorates the exponent of $\varepsilon$ in the estimate, though it remains positive. Let us bound \eqref{Fepsbl14} in $L^{d+\delta}\left(D\left(x,4\right)\setminus \overline{D}\left(x,\frac{1}{4}\right)\right)$:
\begin{align*}
&\left\|\varepsilon A^{*,\alpha,\beta}\left(\frac{\tilde{x}}{\varepsilon}\right)G^{*,1,\eta}_{bl}\left(\frac{\tilde{x}}{\varepsilon}\right)\partial_{\tilde{x}_\alpha}\partial_{\tilde{x}_\beta}\partial_{\tilde{x}_\eta}G^{*,0}(\tilde{x},x)\right\|_{L^{d+\delta}\left(D\left(x,4\right)\setminus \overline{D}\left(x,\frac{1}{4}\right)\right)}\\
&\quad\leq C\varepsilon\left(\int_{D\left(x,4\right)\setminus \overline{D}\left(x,\frac{1}{4}\right)}\left|G^{*,1,\gamma}_{bl}\left(\frac{\tilde{x}}{\varepsilon}\right)\right|^{d+\delta}d\tilde{x}\right)^\frac{1}{d+\delta}\leq C\varepsilon\left(\int_0^8\sqrt{\frac{t}{\varepsilon}}^{d+\delta}dt\right)^\frac{1}{d+\delta}\leq C\varepsilon^\frac{1}{2}.
\end{align*}
Hence we have shown
\begin{equation*}
\left\|F^\varepsilon_{bl}\right\|_{L^{d+\delta}\left(D(x,4)\setminus\overline{D}\left(x,\frac{1}{4}\right)\right)}=O\left(\varepsilon^\frac{1}{2(d+\delta)}\right).
\end{equation*}
This bound, with \eqref{estZ*blLinfty}, \eqref{estZ*blboundary}, the estimate on $F^\varepsilon$ and \eqref{boundarygradestZ*bl}, boils down to 
\begin{equation}\label{estnablaZ*blinfty}
\left\|\nabla Z^{*,\varepsilon,x}_{bl}\right\|_{L^\infty\left(D\left(x,3\right)\setminus \overline{D}\left(x,\frac{1}{3}\right)\right)}\leq C\varepsilon^\kappa
\end{equation}
with a positive, albeit small, $\kappa:=\min\left(1-\nu,\frac{1}{2(d+\delta)}\right)$. One can always take $\nu$ sufficiently small so that $1-\nu>\frac{1}{2(d+\delta)}$. As $\nabla Z^{*,\varepsilon,x}_{bl}$ is $C^\infty$ up to the boun\-dary $\partial\Omega_n$, it follows from \eqref{exprPeps} and \eqref{estnablaZ*blinfty} that one can expand $P^\varepsilon(x,\tilde{x})$. For all $\tilde{x}\in D\left(x,3\right)\setminus \overline{D}\left(x,\frac{1}{3}\right)$,
\begin{equation}\label{dvptPepsT}
\left|P^\varepsilon(x,\tilde{x})-P^0\left(x,\tilde{x},\frac{\tilde{x}}{\varepsilon}\right)-\varepsilon P^1\left(x,\tilde{x},\frac{\tilde{x}}{\varepsilon}\right)-\varepsilon^2P^2\left(x,\tilde{x},\frac{\tilde{x}}{\varepsilon}\right)\right|\leq C\varepsilon^\kappa,
\end{equation}
with at order $\varepsilon^0$
\begin{equation*}
\left[P^0(x,\tilde{x},\tilde{y})\right]^T:=\left[A^{*,\alpha\beta}(\tilde{y})+A^{*,\alpha\gamma}(\tilde{y})\left(\partial_{\tilde{y}_\gamma}\chi^{*,\beta}(\tilde{y})+\partial_{\tilde{y}_\gamma}G^{*,1,\beta}_{bl}(\tilde{y})\right)\right]\partial_{\tilde{x}_\beta}G^{*,0}(\tilde{x},x)n_\alpha,
\end{equation*}
at order $\varepsilon^1$
\begin{multline*}
\left[P^1(x,\tilde{x},\tilde{y})\right]^T:=\left[A^{*,\alpha\beta}(\tilde{y})\left(\chi^{*,\gamma}(\tilde{y})+G^{*,1,\gamma}_{bl}(\tilde{y})\right)\right.\\+\left.A^{*,\alpha\eta}(\tilde{y})\partial_{\tilde{y}_\eta}\Gamma^{*,\gamma\beta}(\tilde{y})+A^{*,\alpha\eta}(\tilde{y})\partial_{\tilde{y_\eta}}\left(\chi^{*,\gamma}(\tilde{y})G^{*,1,\beta}_{bl}(\tilde{y})\right)(\tilde{y})\right]\partial_{\tilde{x}_\beta}\partial_{\tilde{x}_\gamma}G^{*,0}(\tilde{x},x)n_\alpha,
\end{multline*}
and at order $\varepsilon^2$
\begin{equation*}
\left[P^2(x,\tilde{x},\tilde{y})\right]^T:=A^{*,\alpha\beta}(\tilde{y})\left[\Gamma^{*,\gamma\eta}(\tilde{y})+\chi^{*,\gamma}(\tilde{y})G^{*,1,\eta}_{bl}(\tilde{y})\right]\partial_{\tilde{x}_\beta}\partial_{\tilde{x}_\gamma}\partial_{\tilde{x}_\eta}G^{*,0}(\tilde{x},x)n_\alpha.
\end{equation*}
For $y\in\Omega_n$, $\tilde{y}\in\partial\Omega_n$ and $\varepsilon:=\frac{1}{|y-\tilde{y}|}$, $x:=\varepsilon y$ and $\tilde{x}:=\varepsilon\tilde{y}$ are such that $|x-\tilde{x}|=1$. Applying now \eqref{dvptPepsT} with $x$ and $\tilde{x}$ defined like this, and rescaling the estimate in the variables $y,\ \tilde{y}$ using the scaling properties of $P^\varepsilon$ and $G^{*,0}$ (see lemma \ref{lemPepsP}), we finally have:
\begin{theo}\label{theodvptP}
For all $0<\kappa<\frac{1}{2d}$, there exists $C_\kappa>0$, such that for all $y\in\Omega_{n,0}$ and $\tilde{y}\in\partial\Omega_{n,0}$,
\begin{multline}\label{theodvptPeq}
\left|P^T(y,\tilde{y})-A^*(\tilde{y})\nabla_{\tilde{y}}G^{*,0}(\tilde{y},y)\cdot n-A^*(\tilde{y})\nabla_{\tilde{y}}\left(\chi^*(\tilde{y})\cdot\nabla_{\tilde{y}}G^{*,0}(\tilde{y},y)\right)\cdot n\right.\\
-A^*(\tilde{y})\nabla_{\tilde{y}}\left(G^{*,1}_{bl}(\tilde{y})\cdot\nabla_{\tilde{y}}G^{*,0}(\tilde{y},y)\right)\cdot n-A^*(\tilde{y})\nabla_{\tilde{y}}\left(\Gamma^*(\tilde{y})\cdot\nabla^2_{\tilde{y}}G^{*,0}(\tilde{y},y)\right)\cdot n\\
\left.-A^*(\tilde{y})\nabla_{\tilde{y}}\left(\chi^*(\tilde{y})G^{*,1}_{bl}(\tilde{y})\cdot\nabla^2_{\tilde{y}}G^{*,0}(\tilde{y},y)\right)\cdot n\right|\leq\frac{C_\kappa}{|y-\tilde{y}|^{d-1+\kappa}}.
\end{multline}
\end{theo}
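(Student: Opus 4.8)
The plan is to turn the claimed large--scale expansion of $P$ into a homogenization statement through the scaling already set up in Lemma \ref{lemPepsP}. Writing $\varepsilon:=1/|y-\tilde y|$, $x:=\varepsilon y\in\Omega_n$, $\tilde x:=\varepsilon\tilde y\in\partial\Omega_n$, one has $|x-\tilde x|=1$ (hence $0<x\cdot n\le1$) and $P^\varepsilon(x,\tilde x)=\varepsilon^{1-d}P(y,\tilde y)$, so it suffices to prove the \emph{interior} expansion
\begin{equation*}
\bigl|P^\varepsilon(x,\tilde x)-P^0(x,\tilde x,\tfrac{\tilde x}{\varepsilon})-\varepsilon P^1(x,\tilde x,\tfrac{\tilde x}{\varepsilon})-\varepsilon^2 P^2(x,\tilde x,\tfrac{\tilde x}{\varepsilon})\bigr|\le C\varepsilon^\kappa
\end{equation*}
on $D(x,3)\setminus\overline D(x,1/3)$, and then rescale back using the joint homogeneity of $G^{*,0}$ and its derivatives (which accounts for the factor $|y-\tilde y|^{-(d-1)}$, the gain $\varepsilon^\kappa=|y-\tilde y|^{-\kappa}$ producing the exponent $d-1+\kappa$). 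By \eqref{exprPeps}, $P^\varepsilon(x,\cdot)$ is (up to transpose and sign) the conormal derivative on $\partial\Omega_n$ of $G^{*,\varepsilon}(\cdot,x)$, which solves $-\nabla\cdot A^*(\cdot/\varepsilon)\nabla G^{*,\varepsilon}(\cdot,x)=0$ on $D(x,4)\setminus\overline D(x,1/4)$ with zero trace on $\Gamma(x,4)\setminus\overline\Gamma(x,1/4)$; hence the whole problem reduces to a boundary gradient expansion of $G^{*,\varepsilon}$ near $\partial\Omega_n$.

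For that I would proceed as follows. First, record the zeroth--order bound $|G^{*,\varepsilon}-G^{*,0}|\le C_\kappa\varepsilon^\kappa$ on $1/4\le|x-\tilde x|\le4$ (Proposition \ref{propasGeps0}), proved by the Avellaneda--Lin duality trick: extremise $|G^\varepsilon-G^0|$, spread the extremum over a ball of radius $\simeq\sigma_{\varepsilon,x}$ using the gradient bound \eqref{estnablaGeps}, test the difference against a bump supported there, and insert into the $L^\infty$ estimate \eqref{estuepspropc} for $u^\varepsilon-u^0$. Second, build the two--scale expansion of $G^{*,\varepsilon}$ incorporating not only the interior correctors $\chi^*,\Gamma^*$ but also the \emph{boundary layer} corrector $G^{*,1}_{bl}$ (the solution of \eqref{sysbl} with data $-\chi^*$), so that the resulting error $Z^{*,\varepsilon,x}_{bl}$ carries a boundary trace only at order $\varepsilon^2$, coming from $\Gamma^*$. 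Applying the local boundary gradient estimate \eqref{boundarygradest} to $Z^{*,\varepsilon,x}_{bl}$ then reduces the bound on $\|\nabla Z^{*,\varepsilon,x}_{bl}\|_{L^\infty}$ on the inner annulus to controlling: (i) $\|Z^{*,\varepsilon,x}_{bl}\|_{L^\infty}$ on the outer annulus, which is $O(\varepsilon^{1/d})$ by Proposition \ref{propasGeps0} and $G^{*,1}_{bl}\in L^\infty$; (ii) the $L^{d+\delta}$ norm of the bulk source, whose $\varepsilon^{-1}$ and $\varepsilon^0$ terms cancel by the cell equations for $\chi^*$ and $\Gamma^*$, leaving $O(\varepsilon)$; (iii) the $L^{d+\delta}$ norm of the boundary--layer source $F^\varepsilon_{bl}$; and (iv) the $O(\varepsilon^{1-\nu})$ Hölder trace due to $\Gamma^*$. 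Once (i)--(iv) give $\|\nabla Z^{*,\varepsilon,x}_{bl}\|_{L^\infty}\le C\varepsilon^\kappa$, smoothness up to $\partial\Omega_n$ and \eqref{exprPeps} yield \eqref{dvptPepsT}, and the rescaling gives \eqref{theodvptPeq} with any $0<\kappa<\tfrac1{2d}$.

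The main obstacle is step (iii). Expanding $F^\varepsilon_{bl}$ in $\varepsilon$ produces terms that are only $O(1)$ a priori, since the sole quantitative information on $G^{*,1}_{bl}$ is its boundedness; the worst are of the type $\partial_{\tilde y_\alpha}A^{*,\alpha\beta}(\tfrac{\tilde x}{\varepsilon})\,G^{*,1,\eta}_{bl}(\tfrac{\tilde x}{\varepsilon})\,\partial_{\tilde x_\beta}\partial_{\tilde x_\gamma}G^{*,0}(\tilde x,x)$. The resolution is an exact cancellation: using the cell equation for $\chi^*$, this term is annihilated by the matching $O(1)$ term coming from the $\Gamma^*$--block of the expansion. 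For every surviving term one then extracts a positive power of $\varepsilon$: when at least one derivative of $G^{*,1}_{bl}$ appears, one uses that $t\mapsto\|\nabla G^{*,1}_{bl}(\cdot,t)\|_{L^\infty_{z'}}$ is square--integrable in the normal variable (the analogue of \eqref{partial_z_dvbl}; cf. \eqref{ineqintnormalderivees}), which after the change $t\mapsto t/\varepsilon$ gives a factor $\varepsilon^{1/2}$ in $L^1$; when only an undifferentiated $G^{*,1}_{bl}$ occurs, it comes with a spare $\varepsilon$ and one uses the sublinear growth $|G^{*,1}_{bl}(z)|\le C(z\cdot n)^{1/2}$ as in \eqref{estcroissuLinfty}, again yielding $\varepsilon^{1/2}$. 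Interpolating these bounds against the trivial $L^\infty=O(1)$ bound produces $\|F^\varepsilon_{bl}\|_{L^{d+\delta}}=O(\varepsilon^{1/(2(d+\delta))})$, so that $\kappa=\min(1-\nu,\tfrac1{2(d+\delta)})$ works; taking $\nu$ small and then $\delta\to0$ gives any $\kappa<\tfrac1{2d}$. (The case $d=2$ needs the logarithmic Green bound \eqref{estGepsd=2} and the boundary--type bound \eqref{estGepsdqcq} for the far part, but is otherwise identical.)
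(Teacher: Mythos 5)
Your proposal is correct and follows essentially the same route as the paper: the rescaling of Lemma \ref{lemPepsP}, the duality-based zeroth-order estimate of Proposition \ref{propasGeps0}, the two-scale expansion of $G^{*,\varepsilon}$ enriched by the boundary-layer corrector, the Avellaneda--Lin local boundary gradient estimate applied to $Z^{*,\varepsilon,x}_{bl}$, the cell-equation cancellation of the $O(1)$ terms in $F^\varepsilon_{bl}$, and the $\varepsilon^{1/2}$ gains from \eqref{ineqintnormalderivees} and the sublinear growth \eqref{estcroissuLinfty} followed by interpolation, yielding $\kappa=\min\bigl(1-\nu,\tfrac{1}{2(d+\delta)}\bigr)$ and then any $\kappa<\tfrac{1}{2d}$. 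The only slip is attributional: the dangerous $O(1)$ term \eqref{Fepsbl12} is cancelled by \eqref{Fepsbl21}, which comes from the $\varepsilon^2\chi^{*}G^{*,1}_{bl}\cdot\nabla^2_{\tilde y}G^{*,0}$ cross-term of the expansion rather than from the $\Gamma^*$-block, but the mechanism you invoke (the cell equation for $\chi^*$) is exactly the one used.
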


\selectlanguage{english}

\pagestyle{plain}

\section{Convergence towards a boundary layer tail}
\label{secCV}

\subsection{The convergence proof}

It follows from theorem \ref{theov=w} that the variational solution $v_{bl}$ of \eqref{sysbl} in $\Omega_{n,0}$ (see section \ref{subsecdgvnmdiv}) can be expressed by the mean of Poisson's kernel associated to $-\nabla\cdot A(y)\nabla\cdot$ and $\Omega_n$. Hence, by theorem \ref{theodvptP}, for all $y\in\Omega_n$, for all $i\in\{1,\ldots\ N\}$, 
\begin{align}\label{reprintvbldevptP}
&v_{bl,i}(y)=\int_{\partial\Omega_n}P_{ij}(y,\tilde{y})v_{0,j}(\tilde{y})d\tilde{y}=\int_{\partial\Omega_n}P^T_{ji}(y,\tilde{y})v_{0,j}(\tilde{y})d\tilde{y}\\
&=\int_{\partial\Omega_n}A^*_{jk}(\tilde{y})\nabla_{\tilde{y}}\Bigl[G^{*,0}(\tilde{y},y)+\chi^*(\tilde{y})\cdot\nabla_{\tilde{y}}G^{*,0}(\tilde{y},y)\Bigr.\nonumber\\
&\qquad+G^{*,1}_{bl}(\tilde{y})\cdot\nabla_{\tilde{y}}G^{*,0}(\tilde{y},y)+\Gamma^*(\tilde{y})\cdot\nabla^2_{\tilde{y}}G^{*,0}(\tilde{y},y)\nonumber\\
&\qquad\qquad\Bigl.+\chi^*(\tilde{y})G^{*,1}_{bl}(\tilde{y})\cdot\nabla^2_{\tilde{y}}G^{*,0}(\tilde{y},y)\Bigr]_{ki}\cdot n\ v_{0,j}(\tilde{y})d\tilde{y}+\int_{\partial\Omega_n}R_{i}(y,\tilde{y})d\tilde{y}\nonumber
\end{align}
where for all $\tilde{y}\in\partial\Omega_n$, $\left|R_i(y,\tilde{y})\right|\leq\frac{C}{|y-\tilde{y}|^{d-1+\kappa}}$. The bound on the remainder $R_i$ yields
\begin{multline*}
\left|\int_{\partial\Omega_n}R_{i}(y,\tilde{y})d\tilde{y}\right|\leq C\int_{\partial\Omega_n}\frac{1}{|y-\tilde{y}|^{d-1+\kappa}}d\tilde{y}\\
\leq C\int_{\mathbb R^{d-1}\times\{0\}}\frac{1}{|z-\tilde{z}|^{d-1+\kappa}}d\tilde{z}\leq\frac{C}{(y\cdot n)^\kappa}\int_{\mathbb R^{d-1}}\frac{1}{\left(1+|\tilde{z}'|^2\right)^\frac{d-1+\kappa}{2}}d\tilde{z}'\leq \frac{C}{(y\cdot n)^\kappa}\stackrel{y\cdot n\rightarrow\infty}{\longrightarrow}0.
\end{multline*}
where $z:=\mathrm{M}^Ty$. It remains to handle the other terms. The boundary function $v_0$ is quasiperiodic along the boundary $\partial\Omega_n$ albeit not periodic. This suggests to use the following lemma to take advantage of the ergodic properties related to the quasiperiodic setting.
\begin{lem}[\cite{Subin74} theorem S.$3$]\label{lemergodic}
Let $f=f(y)\in\mathbb R$ be a quasiperiodic function on $\mathbb R^d$. Then, there exists $\mathcal{M}\{f\}\in\mathbb R$ such that for all $\varphi\in L^1\left(\mathbb R^d\right)$,
\begin{equation*}
\int_{\mathbb R^d}\varphi(y)f\left(\frac{y}{\varepsilon}\right)dy\stackrel{\varepsilon\rightarrow 0}{\longrightarrow}\mathcal M\{f\}\int_{\mathbb R^d}\varphi(y)dy.
\end{equation*}
\end{lem}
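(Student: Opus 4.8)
The plan is to prove the statement first for a dense subclass of test functions and then extend by density, exploiting the boundedness of quasiperiodic functions. Recall that a quasiperiodic $f$ is by definition the uniform limit on $\mathbb R^d$ of trigonometric polynomials built from a fixed finite set of frequencies; in particular $f\in L^\infty(\mathbb R^d)$, say $\|f\|_{L^\infty}=M$. Consequently, for every $\varphi\in L^1(\mathbb R^d)$ and every $\varepsilon>0$ one has the uniform bound $\bigl|\int_{\mathbb R^d}\varphi(y)f(y/\varepsilon)\,dy\bigr|\leq M\|\varphi\|_{L^1}$, and the same bound holds for the limiting functional $\varphi\mapsto\mathcal M\{f\}\int\varphi$. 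Hence it suffices to establish the convergence for $\varphi$ ranging over a family whose linear span is dense in $L^1(\mathbb R^d)$, an $\eta/3$-argument then yielding the general case.

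I would take that family to be the indicator functions $\varphi=\mathbf 1_Q$ of bounded boxes $Q=x_0+\prod_{j=1}^d[-T_j,T_j]$, whose finite linear combinations are the step functions, dense in $L^1$. For such $\varphi$ the change of variables $z=y/\varepsilon$ gives
\[
\int_{\mathbb R^d}\varphi(y)f(y/\varepsilon)\,dy=\varepsilon^d\int_{Q/\varepsilon}f(z)\,dz=|Q|\cdot\frac{1}{|Q/\varepsilon|}\int_{Q/\varepsilon}f(z)\,dz ,
\]
so everything reduces to showing that the average of $f$ over the dilated box $Q/\varepsilon$ tends, as $\varepsilon\to0$, to a constant independent of $x_0$ and of the ratios $T_j$. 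This is exactly the existence, together with the translation- and shape-independence, of the mean value $\mathcal M\{f\}$. For a trigonometric polynomial $P(y)=\sum_k c_k e^{2i\pi\lambda_k\cdot y}$ one computes directly that $\frac{1}{|Q/\varepsilon|}\int_{Q/\varepsilon}P$ equals $\sum_k c_k e^{2i\pi\lambda_k\cdot x_0/\varepsilon}\prod_{j=1}^d\frac{\sin(2\pi\lambda_{k,j}T_j/\varepsilon)}{2\pi\lambda_{k,j}T_j/\varepsilon}$, where the product attached to the frequency $\lambda_k=0$ equals $1$ while every other product tends to $0$ as $\varepsilon\to0$, uniformly in $x_0$; hence this average converges to $c_0$, uniformly in the position and the shape of the box. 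For a general quasiperiodic $f$, approximating uniformly by trigonometric polynomials $P_n$ and using that box-averages are $1$-Lipschitz for the sup-norm, one gets that $\frac{1}{|Q/\varepsilon|}\int_{Q/\varepsilon}f$ is Cauchy as $\varepsilon\to0$ with a limit $\mathcal M\{f\}=\lim_n c_0^{(n)}$ inheriting the independence of $x_0$ and of shape. Assembling the pieces proves the lemma for box indicators, then for step functions by linearity, and finally for all $\varphi\in L^1$ by the density/uniform-bound argument of the first paragraph.

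The only genuinely substantive point is the uniformity in $x_0$ of the convergence of the box-averages: without it one cannot pass from a single dilating family of boxes to step functions supported arbitrarily far from the origin. This uniformity is transparent for trigonometric polynomials — the translation contributes merely a unimodular phase — and is transferred to the whole class by the uniform approximation that defines quasiperiodicity. Alternatively, one may simply invoke the existence of the mean value $\mathcal M\{f\}$ from \cite{Subin74}, after which the proof consists only of the two elementary steps of scaling and of $L^1$-density.
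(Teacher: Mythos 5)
Your proof is correct, but note that the paper does not prove this lemma at all: it is quoted as theorem S.3 of \cite{Subin74} (the mean-value/ergodic theorem for almost periodic functions), so your argument is a self-contained elementary substitute rather than a variant of an argument in the text. What you do — reduce to box indicators by the uniform bound $\bigl|\int\varphi(y)f(y/\varepsilon)\,dy\bigr|\leq\|f\|_{L^\infty}\|\varphi\|_{L^1}$ and $L^1$-density of step functions, compute the dilated-box average exactly for a trigonometric polynomial (the unimodular phase $e^{2i\pi\lambda_k\cdot x_0/\varepsilon}$ times the product of $\mathrm{sinc}$ factors, which vanishes in the limit unless $\lambda_k=0$), and transfer to $f$ by uniform approximation, the box average being $1$-Lipschitz in the sup norm — is sound, and it in fact proves the statement for every Bohr almost periodic $f$, which covers the quasiperiodic functions $F(\mathrm N z')$ with $F$ continuous periodic that the paper actually feeds into the lemma. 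Two small remarks: the phrase ``built from a fixed finite set of frequencies'' should read ``with frequencies in the module generated by a fixed finite set'' (only uniform approximation by trigonometric polynomials and boundedness are used, so nothing changes); and the uniformity in $x_0$ you single out at the end is not actually needed for the density step, since a step function involves only finitely many fixed boxes, each dilated about the origin — the genuine point, which your computation does handle, is that the center $x_0/\varepsilon$ of the dilated box escapes to infinity and contributes only a harmless phase. Compared with simply invoking \cite{Subin74}, your route costs a page of elementary analysis and buys independence from the reference, with no loss of generality for the application in section 6.
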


Let $i,\ j\in\{1,\ldots\ N\}$ be fixed. We focus on the convergence when $y\cdot n\rightarrow\infty$ of
\begin{equation*}
\int_{\partial\Omega_n}A^*_{jk}(\tilde{y})\nabla_{\tilde{y}}G_{ki}^{*,0}(\tilde{y},y)\cdot n\ v_{0,j}(\tilde{y})d\tilde{y}=\int_{\partial\Omega_n}A^{*,\alpha\beta}_{jk}(\tilde{y})\partial_{\tilde{y}_\beta}G^{*,0}_{ki}(\tilde{y},y)n_\alpha v_{0,j}(\tilde{y})d\tilde{y}.
\end{equation*}
For all $R>0$, for all $y=y'+(y\cdot n)n\in\Omega_n$ with $y'\in\partial\Omega_n\cap B(0,R)$, taking $z:=\mathrm{M}^Ty$ and $\varepsilon:=\frac{1}{y\cdot n}>0$, we get
\begin{align}
&\int_{\partial\Omega_n}A^{*,\alpha\beta}_{jk}(\tilde{y})\partial_{\tilde{y}_\beta}G^{*,0}_{ki}(\tilde{y},y)n_\alpha v_{0,j}(\tilde{y})d\tilde{y}\nonumber\\
&=\int_{\mathbb R^{d-1}\times\{0\}}A^{*,\alpha\beta}_{jk}(\mathrm{M}\tilde{z})\partial_{1,\beta}G^{*,0}_{ki}(\mathrm{M}\tilde{z},\mathrm{M}(z',z_d))n_\alpha v_{0,j}(\mathrm{M}\tilde{z})d\tilde{z}\nonumber\\
&=\int_{\mathbb R^{d-1}}A^{*,\alpha\beta}_{jk}\left(\mathrm{M}\left(\frac{\tilde{z}'}{\varepsilon},0\right)\right)\frac{1}{\varepsilon^{d-1}}\partial_{1,\beta}G^{*,0}_{ki}\left(\frac{\mathrm{M}(\tilde{z}',0)}{\varepsilon},\frac{\mathrm{M}(\varepsilon z',1)}{\varepsilon}\right)n_\alpha v_{0,j}\!\left(\!\mathrm{M}\left(\frac{\tilde{z}'}{\varepsilon},0\!\right)\!\right)d\tilde{z}'\nonumber\\
&=\int_{\mathbb R^{d-1}}\partial_{1,\beta}G^{*,0}_{ki}\left(\mathrm{M}(\tilde{z}',0),\mathrm{M}(0,1)\right)n_\alpha A^{*,\alpha\beta}_{jk}\left(\mathrm{M}\left(\frac{\tilde{z}'}{\varepsilon},0\right)\right)v_{0,j}\!\left(\!\mathrm{M}\left(\frac{\tilde{z}'}{\varepsilon},0\!\right)\!\right)d\tilde{z}'\nonumber\\
&\qquad+\int_{\mathbb R^{d-1}}\Bigl[\partial_{1,\beta}G^{*,0}_{ki}\left(\mathrm{M}(\tilde{z}',0),\mathrm{M}(\varepsilon z',1)\right)\Bigr.\label{eqCVBLtailterme1}\\
&\qquad\qquad\qquad\qquad\Bigl.-\partial_{1,\beta}G^{*,0}_{ki}\left(\mathrm{M}(\tilde{z}',0),\mathrm{M}(0,1)\right)\Bigr]n_\alpha A^{*,\alpha\beta}_{jk}\left(\mathrm{M}\left(\frac{\tilde{z}'}{\varepsilon},0\right)\right)v_{0,j}\!\left(\!\mathrm{M}\left(\frac{\tilde{z}'}{\varepsilon},0\!\right)\!\right)d\tilde{z}'.\nonumber
\end{align}
Let us show that the second term in \eqref{eqCVBLtailterme1} tends to $0$ when $\varepsilon\rightarrow 0$, uniformly in $z'\in B(0,R)\subset\mathbb R^{d-1}$. For $\varepsilon$ sufficiently small such that $\varepsilon R\leq 1$,
\begin{align*}
&\left|\int_{\mathbb R^{d-1}}\left[\partial_{1,\beta}G^{*,0}_{ki}\left(\mathrm{M}(\tilde{z}',0),\mathrm{M}(\varepsilon z',1)\right)-\partial_{1,\beta}G^{*,0}_{ki}\left(\mathrm{M}(\tilde{z}',0),\mathrm{M}(0,1)\right)\right]\right.\\
&\qquad\qquad\qquad\qquad\qquad\qquad\qquad\left. n_\alpha A^{*,\alpha\beta}_{jk}\left(\mathrm{M}\left(\frac{\tilde{z}'}{\varepsilon},0\right)\right)v_{0,j}\!\left(\!\mathrm{M}\left(\frac{\tilde{z}'}{\varepsilon},0\!\right)\!\right)d\tilde{z}'\right|\\
&\leq C\int_{\mathbb R^{d-1}}\sup_{u'\in B\left(0,\varepsilon R\right)}\left|\nabla_{1}\nabla_{2}G^{*,0}\left(\mathrm{M}(\tilde{z}',0),\mathrm{M}(u',1)\right)\right|\varepsilon|z'|d\tilde{z}'\\
&\leq C\varepsilon R\int_{\mathbb R^{d-1}}\sup_{u'\in B\left(0,1\right)}\frac{C}{\left(1+|\tilde{z}'-u'|^2\right)^\frac{d}{2}}d\tilde{z}'\\
&\leq C\varepsilon.
\end{align*}
From the bound \eqref{estALinGdqcq}, we get for all $\tilde{y}\in\partial\Omega_n$,
\begin{equation*}
\left|\nabla_{1}G^{*,0}(\tilde{y},y)\right|\leq C\frac{y\cdot n}{|y-\tilde{y}|^d},
\end{equation*}
which shows that 
\begin{equation*}
\left|\partial_{1,\beta}G^{*,0}_{ki}\left(\mathrm{M}(\tilde{z}',0),\mathrm{M}(0,1)\right)\right|\leq \frac{C}{\left(1+|\tilde{z}'|^2\right)^\frac{d}{2}}.
\end{equation*}
Hence $\partial_{1,\beta}G^{*,0}_{ki}\left(\mathrm{M}(\tilde{z}',0),\mathrm{M}(0,1)\right)\in L^1_{\tilde{z}'}\left(\mathbb R^{d-1}\right)$ and one can apply lemma \ref{lemergodic} to get the convergence of the first term in \eqref{eqCVBLtailterme1}:
\begin{multline*}
\int_{\mathbb R^{d-1}}\partial_{1,\beta}G^{*,0}_{ki}\left(\mathrm{M}(\tilde{z}',0),\mathrm{M}(0,1)\right)n_\alpha A^{*,\alpha\beta}_{jk}\left(\mathrm{M}\left(\frac{\tilde{z}'}{\varepsilon},0\right)\right)v_{0,j}\!\left(\!\mathrm{M}\left(\frac{\tilde{z}'}{\varepsilon},0\!\right)\!\right)d\tilde{z}'\\
\stackrel{\varepsilon\rightarrow 0}{\longrightarrow}\mathcal M
\left\{\begin{array}{rcl}
\partial\Omega_n&\longrightarrow&\mathbb R\\
\tilde{y}&\longmapsto&A^{*,\alpha\beta}_{jk}(\tilde{y})v_{0,j}(\tilde{y})n_\alpha
\end{array}\right\}\int_{\Omega_n}\partial_{\tilde{y}_\beta}G_{ki}^{*,0}(\tilde{y},n)d\tilde{y}.
\end{multline*}
The reasoning is identical for 
\begin{subequations}
\begin{align}
&\int_{\partial\Omega_n}A^*_{jk}(\tilde{y})\nabla_{\tilde{y}}\left[\chi^*_{kl}(\tilde{y})\nabla_{\tilde{y}}G^{*,0}_{li}(\tilde{y},y)\right]\cdot n v_{0,j}(\tilde{y})d\tilde{y},\\
&\int_{\partial\Omega_n}A^*_{jk}(\tilde{y})\nabla_{\tilde{y}}\left[G^{*,1}_{bl,kl}(\tilde{y})\nabla_{\tilde{y}}G^{*,0}_{li}(\tilde{y},y)\right]\cdot n v_{0,j}(\tilde{y})d\tilde{y},\label{termeeps0CVCL}
\end{align}
\end{subequations}
as both terms involve just one derivative of $G^{*,0}$. For \eqref{termeeps0CVCL}, we notice that $\tilde{y}\mapsto\partial_{\tilde{y}_\beta}G^{*,1,\gamma}_{bl}(\tilde{y})$ is quasiperiodic on $\partial\Omega_n$: we know from section \ref{subsecdgvnmdiv} that there is a unique smooth $V^\gamma=V^\gamma(\theta,t)\in\mathbb R^N$, defined for $\theta\in\mathbb T^d$ and $t\geq 0$, such that for all $\tilde{y}\in\partial\Omega_n$,
\begin{equation*}
G^{*,1,\gamma}_{bl}(\tilde{y})=G^{*,1,\gamma}_{bl}(\mathrm{M}\tilde{z})=V^\gamma(\mathrm{N}\tilde{z}',0).
\end{equation*}

The other terms in \eqref{reprintvbldevptP} involving strictly more than one derivative of $G^{*,0}$ tend to $0$ when $\varepsilon\rightarrow 0$. Indeed, for all $R>0$, for all $y=y'+(y\cdot n)n\in\Omega_n$ with $y'\in\partial\Omega_n\cap B(0,R)$, taking again $z:=\mathrm{M}^Ty$ and $\varepsilon:=\frac{1}{y\cdot n}>0$, 
\begin{align*}
&\int_{\partial\Omega_n}A^{*,\alpha\beta}_{jk}(\tilde{y})\chi^{*,\gamma}_{kl}(\tilde{y})\partial^2_{\tilde{y}_{\beta\gamma}}G^{*,0}_{li}(\tilde{y},y)n_\alpha v_{0,j}(\tilde{y})d\tilde{y}\\
&=\int_{\mathbb R^{d-1}\times\{0\}}A^{*,\alpha\beta}_{jk}(\mathrm{M}\tilde{z})\chi^{*,\gamma}_{kl}(\mathrm{M}\tilde{z})\partial^2_{1,\beta\gamma}G^{*,0}_{li}(\mathrm{M}\tilde{z},\mathrm{M}(z',z_d))n_\alpha v_{0,j}(\mathrm{M}\tilde{z})d\tilde{z}\\
&=\varepsilon\int_{\mathbb R^{d-1}}A^{*,\alpha\beta}_{jk}\left(\mathrm{M}\left(\frac{\tilde{z}'}{\varepsilon},0\right)\right)\chi^{*,\gamma}_{kl}\left(\mathrm{M}\left(\frac{\tilde{z}'}{\varepsilon},0\right)\right)\partial^2_{1,\beta\gamma}G^{*,0}_{ki}\left(\mathrm{M}(\tilde{z}',0),\mathrm{M}(\varepsilon z',1)\right)\\
&\qquad\qquad\qquad\qquad\qquad\qquad\qquad\qquad\qquad\qquad\qquad\qquad\qquad\qquad n_\alpha v_{0,j}\!\left(\!\mathrm{M}\left(\frac{\tilde{z}'}{\varepsilon},0\!\right)\!\right)d\tilde{z}',
\end{align*}
which is easily shown to be of order $O(\varepsilon)$. We proceed following the same method for the remaing terms. This demonstrates the convergence of the boundary layer towards a constant vector field $v_{bl}^\infty$, the boundary layer tail: for all $y=y'+(y\cdot n)n\in\Omega_n$ with $y'\in\partial\Omega_n\cap B(0,R)$,
\begin{multline}\label{CVBLexprBLtail}
v_{bl}(\tilde{y})\stackrel{y\cdot n\rightarrow\infty}{\longrightarrow}v_{bl}^\infty:=\int_{\partial\Omega_n}\partial_{\tilde{y}_\alpha}G^0(n,\tilde{y})d\tilde{y}\Biggl[\mathcal M\left\{A^{\beta\alpha}(\tilde{y})v_0(\tilde{y})n_\beta\right\}\Biggr.\\
\Biggl.+\mathcal M\left\{\partial_{\tilde{y}_\beta}\left(\chi^{*,\alpha}\right)^T(\tilde{y})A^{\beta\gamma}(\tilde{y})v_0(\tilde{y})n_\gamma\right\}+\mathcal M\left\{\partial_{\tilde{y}_\beta}\bigl(G^{*,1,\alpha}_{bl}\bigr)^T\!\!\!(\tilde{y})A^{\beta\gamma}(\tilde{y})v_0(\tilde{y})n_\gamma\right\}\Biggr]
\end{multline}
locally uniformly in $y'$. Moreover, \eqref{CVBLexprBLtail} yields an explicit expression for $v_{bl}^\infty$ in terms of the means $\mathcal M\left\{\cdot\right\}$ on $\partial\Omega_n$. 

\subsection{The boundary layer tail does not depend on $a$}
\label{secindep}

In the preceding section we have shown the convergence of $v_{bl}$ defined in $\Omega_{n,a}$ towards $v_{bl}^{a,\infty}$ when $y\cdot n\rightarrow\infty$. It remains to show that $v^{a,\infty}_{bl}$ is independent from $a$ in order to complete the proof of theorem \ref{theoCVBLtail}. The fact that $n\notin\mathbb R\mathbb Q^d$ is crucial. To do so, we generalize lemma $6$ in \cite{dgvnm}:
\begin{prop}\label{propcontinuityava}
Assume that $n\notin\mathbb R\mathbb Q^d$. Then,
\begin{equation*}
a\in\mathbb R\longmapsto v_{bl}^{a,\infty}\in\mathbb R^N
\end{equation*}
is Lipschitz continuous.
\end{prop}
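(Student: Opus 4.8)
The plan is to fix two real numbers $a,b$ with $a<b$ and compare $v_{bl}^{a,\infty}$ with $v_{bl}^{b,\infty}$ by relating the boundary layer corrector in $\Omega_{n,a}$ to the one in $\Omega_{n,b}$ via a translation. Write $v^a$ for the variational solution of \eqref{sysblz} with Dirichlet datum at $z_d=a$ and $v^b$ for the one with datum at $z_d=b$. The first step is to observe that $v^a(\cdot+(b-a)e_d)$, restricted to $\{z_d>b\}$, solves the same equation as $v^b$ there but with \emph{boundary trace} $v^a(\cdot,b)$ on $\{z_d=b\}$ rather than $v_0(\mathrm M(\cdot,b))$. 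Hence the difference $w:=v^b-v^a(\cdot+(b-a)e_d)$ on $\{z_d>b\}$ solves the homogeneous system $-\nabla\cdot B(\mathrm Mz)\nabla w=0$ with Dirichlet datum $v_0(\mathrm M(\cdot,b))-v^a(\cdot,b)$ on $\{z_d=b\}$, and by uniqueness (theorem \ref{theov=w}) $w$ is given by the Poisson integral of that datum. Since both $v^a$ and $v^b$ converge to their tails, the tail of $w$ is $v_{bl}^{b,\infty}-v_{bl}^{a,\infty}$. So the quantity to estimate is the tail of the boundary layer corrector associated to the \emph{residual datum} $v_0(\mathrm M(\cdot,b))-v^a(\cdot,b)$ along $\{z_d=b\}$.

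The second step is to bound this residual datum. Here the hypothesis $n\notin\mathbb R\mathbb Q^d$, hence the quasiperiodicity of $v_0$ along the boundary, must be exploited: working in the $(\theta,t)$ variables of section \ref{subsecdgvnmdiv}, one has $v^a(z',z_d)=V^a(\mathrm Nz',z_d)$ with $V^a$ the variational solution of \eqref{sysVthetat} on $\mathbb T^d\times]a,\infty[$, and similarly for $V^b$. Now $V^a(\theta,a)=V_0(\theta,a)$ and, as $|b-a|$ varies, the a priori bounds \eqref{aprioriboundV} together with the decay estimates in the small-divisors-free part of theorem \ref{theodecaydiv} (estimate \eqref{partial_z_dv}) control $\|V^a(\cdot,t)-V^a(\cdot,s)\|$ by $\int_s^t\|\partial_tV^a(\cdot,\tau)\|\,d\tau$. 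The point is to show that the residual trace, as an element of the relevant Sobolev space on $\mathbb T^d$, depends continuously — in fact Lipschitz-continuously — on the pair $(a,b)$: morally $\|v_0(\mathrm M(\cdot,b))-v^a(\cdot,b)\|\le C|b-a|$ for $|b-a|\le1$, uniformly. One gets this by differentiating the datum $V_0(\theta,t)$ in $t$ (it is smooth) and controlling $\partial_tV^a$ near $t=a$ by the energy estimates of \cite{dgvnm}, which as emphasized in section \ref{subsecdgvnmdiv} do \emph{not} use the small divisors assumption.

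The third step is to feed this residual datum into the convergence machinery of the previous subsection. By the representation \eqref{reprintvbldevptP} and theorem \ref{theodvptP}, the tail of the boundary layer corrector with a given bounded datum $g$ on $\partial\Omega_{n,b}$ is a linear expression in the ergodic means $\mathcal M\{A^{\beta\alpha}g\,n_\beta\}$ and analogous means involving one tangential derivative of the correctors; each such mean is bounded by $C\|g\|_{L^\infty}$ (plus, for the derivative terms, by the quasiperiodic structure of $g$, a bound on a suitable norm of $g$). Combining with step two, $|v_{bl}^{b,\infty}-v_{bl}^{a,\infty}|\le C|b-a|$ for $|b-a|\le1$, and since the bound is uniform in $a$ this gives global Lipschitz continuity. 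The main obstacle I expect is step two: making rigorous that the residual trace depends Lipschitz-continuously on $a$ and $b$ \emph{uniformly}, which requires carefully transporting the energy estimates for $V^a$ through the translation and checking that the constants do not degenerate as $a\to-\infty$; here one uses that, after the change of variables, system \eqref{sysVthetat} is translation-covariant in $t$ up to relabelling, so the relevant norms of $V^a$ near its own boundary are independent of $a$.
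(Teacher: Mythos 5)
Your plan is essentially sound, but it takes a genuinely different route from the paper. The paper fixes $a,a'$, translates both problems to the fixed half-space $\Omega_n$ (so the coefficients become $A(\cdot+an)$ and $A(\cdot+a'n)$), and proves a perturbation estimate for the difference of the two Green kernels, $\left|G^a(y,\tilde y)-G^{a'}(y,\tilde y)\right|\leq C|a-a'|\,(y\cdot n)^\mu(\tilde y\cdot n)^\mu|y-\tilde y|^{-(d-2+2\mu)}$, via the Hofmann--Kim representation formula for the difference of Green kernels and the Avellaneda--Lin local boundary estimate \eqref{boundaryest}; feeding this into the Green representation of the lifted problems gives the pointwise bound $|v^a_{bl}(y)-v^{a'}_{bl}(y)|\leq C|a-a'|$ for $y\cdot n\geq 1$, and the Lipschitz bound on the tails follows by letting $y\cdot n\to\infty$. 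You instead exploit the nesting of the half-spaces: the difference of the two correctors on the smaller half-space is itself a boundary layer corrector with the residual datum $v_0(\mathrm M(\cdot,b))-v^a(\cdot,b)$, which is quasiperiodic along the boundary and of size $O(|b-a|)$, and you then use the linearity and $L^\infty$-boundedness of the tail map furnished by \eqref{CVBLexprBLtail}. Your route avoids the new Green-kernel perturbation lemma altogether, at the price of having to check that the convergence machinery of section \ref{secCV} (and theorem \ref{theov=w}) applies verbatim to a datum that is quasiperiodic but not the restriction of a $\mathbb Z^d$-periodic function, and to the translated coefficients $A(\cdot+bn)$, with constants uniform in $b$; the paper's route gives in addition a pointwise $O(|a-a'|)$ estimate on the whole solution difference, not only on the tails.

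Two points in your write-up need repair, though neither is fatal. First, the translation in step one is wrong as stated: $v^a(\cdot+(b-a)e_d)$ does not solve the same equation as $v^b$, because $z\mapsto B(\mathrm Mz)$ is not periodic in $z_d$ when $n\notin\mathbb R\mathbb Q^d$, and its trace on $\{z_d=b\}$ is $v^a(\cdot,2b-a)$, not $v^a(\cdot,b)$. What you actually use (and what makes your datum correct) is simply the restriction of $v^a$ to $\{z_d>b\}$, which does solve the same equation there with trace $v^a(\cdot,b)$; drop the translation. Second, the estimate \eqref{partial_z_dv} you invoke is an $L^2_t$ bound and by itself only yields $\|v_0(\mathrm M(\cdot,b))-v^a(\cdot,b)\|_{L^\infty}\leq C|b-a|^{1/2}$, i.e. H\"older-$\frac12$ rather than Lipschitz continuity. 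To get the full Lipschitz bound you need $\partial_tV^a$ bounded in $L^\infty\left(\mathbb T^d\times]a,a+1[\right)$ uniformly in $a$; this does follow from the full a priori bounds \eqref{aprioriboundV} (which control all $(\theta,t)$-derivatives of $\partial_tV^a$ in $L^2$) combined with Sobolev embedding in the $d+1$ variables, and the constants are indeed uniform in $a$ since shifting $t$ by $a$ amounts to shifting $\theta$ in the coefficients and the data, as you correctly anticipate. With these two corrections, and a short verification that the ergodic means in \eqref{CVBLexprBLtail} are bounded by $C\|g\|_{L^\infty}$ uniformly in the translation parameter (using boundedness of $\nabla\chi^*$ and of $\nabla G^{*,1}_{bl}$ on the boundary for the shifted operators), your argument gives an alternative proof of the proposition.
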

The proof in \cite{dgvnm} relies on the small divisors assumption \eqref{smalldivisors1} and energy estimates. We, instead, have recourse to Poisson's integral formula and estimates on Poisson's kernel. Let $a,\ a'\in\mathbb R$ and $\nu:=\left|a'-a\right|$. We call $G^a$ (resp. $P^a$) the Green (resp. Poisson) kernel associated to $-\nabla\cdot A\left(\cdot+an\right)\nabla\cdot$ and $\Omega_{n}=\Omega_{n,0}$. We define analogously $G^{a'}$ and $P^{a'}$. We also have the $*$-versions $G^{*,a}$ and $G^{*,a'}$ corresponding to the transposed operator (see section \ref{subsecGreenPoisson}). The following lemma is an adaptation of the results due to Avellaneda and Lin (see \cite{alin}), Kenig and Shen (see \cite{ks11} section $2$) and G\'erard-Varet and Masmoudi (see \cite{dgvnm2} appendix A).
\begin{lem}
Let $0<\mu<1$. There exists $C>0$ such that for all $y,\ \tilde{y}\in\Omega_{n}$, $y\neq\tilde{y}$,
\begin{subequations}
\begin{align}
\left|G^{a}(y,\tilde{y})-G^{a'}(y,\tilde{y})\right|&\leq C\nu\frac{1}{\left|y-\tilde{y}\right|^{d-2}},\qquad\mbox{if }d\geq 3,\label{estG*aG*a'd-2}\\
\left|G^{a}(y,\tilde{y})-G^{a'}(y,\tilde{y})\right|&\leq C\nu\frac{\left(y\cdot n\right)^\mu\left(\tilde{y}\cdot n\right)^\mu}{|y-\tilde{y}|^{d-2+2\mu}},\qquad\mbox{for all }d\geq 2\label{estG*aG*a'd}.
\end{align}
\end{subequations}
\end{lem}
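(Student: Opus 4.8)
The plan is to compare $G^a$ and $G^{a'}$ by a perturbation argument, treating the difference $w(y,\tilde y):=G^a(y,\tilde y)-G^{a'}(y,\tilde y)$ (for fixed $\tilde y$) as the solution of an elliptic system with the \emph{same} operator $-\nabla\cdot A(\cdot+an)\nabla\cdot$, zero boundary data on $\partial\Omega_n$, and a right-hand side that is a divergence of something of size $O(\nu)$. Indeed, writing $G^{a'}(\cdot,\tilde y)$ and using that it solves the system with coefficients $A(\cdot+a'n)$, one gets
\begin{equation*}
-\nabla_y\cdot A(y+an)\nabla_y w(y,\tilde y)=\nabla_y\cdot\Big[\big(A(y+a'n)-A(y+an)\big)\nabla_y G^{a'}(y,\tilde y)\Big],
\end{equation*}
and since $A$ is smooth and $1$-periodic, $\|A(\cdot+a'n)-A(\cdot+an)\|_{L^\infty}\leq C\nu$ (at least for $\nu$ small; the general case follows by chaining, or one reduces to $\nu\le 1$ by periodicity). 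So $w$ is represented via $G^a$ itself: $w(y,\tilde y)=-\int_{\Omega_n}\nabla_{\hat y}G^a(y,\hat y)\cdot\big(A(\hat y+a'n)-A(\hat y+an)\big)\nabla_{\hat y}G^{a'}(\hat y,\tilde y)\,d\hat y$, after an integration by parts (the boundary terms vanish because both Green kernels vanish on $\partial\Omega_n$ and decay suitably at infinity, by the estimates \eqref{estALinG}).

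First I would establish \eqref{estG*aG*a'd-2} for $d\ge 3$. Plug the pointwise bounds $|\nabla_{\hat y}G^a(y,\hat y)|\leq C|y-\hat y|^{-(d-1)}$ and $|\nabla_{\hat y}G^{a'}(\hat y,\tilde y)|\leq C|\hat y-\tilde y|^{-(d-1)}$ from \eqref{estALinnablaGdqcq} into the representation formula, bound the coefficient difference by $C\nu$, and evaluate
\begin{equation*}
\int_{\mathbb R^d}\frac{d\hat y}{|y-\hat y|^{d-1}|\hat y-\tilde y|^{d-1}}\leq\frac{C}{|y-\tilde y|^{d-2}},
\end{equation*}
which is the standard convolution estimate $|x|^{-\alpha}*|x|^{-\beta}\sim|x|^{d-\alpha-\beta}$ valid here since $d-1<d$ and $2(d-1)>d$ for $d\ge 3$. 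This gives \eqref{estG*aG*a'd-2} directly. For \eqref{estG*aG*a'd} and general $d\ge 2$, I would instead use the refined bounds \eqref{estALinnablaGdqcq2}, namely $|\nabla_{\hat y}G^a(y,\hat y)|\leq C\big(\frac{\hat y\cdot n}{|y-\hat y|^d}+\frac{(y\cdot n)(\hat y\cdot n)}{|y-\hat y|^{d+1}}\big)$ and the analogous estimate for $G^{a'}$, so that the integrand carries two extra factors of $(\hat y\cdot n)$; combining this with the interpolation inequality $(\hat y\cdot n)\le (\hat y\cdot n)^\mu\,|y-\hat y|^{1-\mu}$ (and likewise $|\hat y-\tilde y|^{1-\mu}$) lets me trade the surplus decay in $|y-\hat y|$ and $|\hat y-\tilde y|$ for the factors $(y\cdot n)^\mu(\tilde y\cdot n)^\mu$ in the numerator, reducing the remaining integral to one of the same convolution type but now convergent for all $d\ge 2$ thanks to the extra $\mu$-powers. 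Carefully splitting the region of integration into $|\hat y-y|\le |y-\tilde y|/2$, $|\hat y-\tilde y|\le |y-\tilde y|/2$, and the complement — the standard trichotomy for such kernel convolutions — yields the bound $C\nu\,\frac{(y\cdot n)^\mu(\tilde y\cdot n)^\mu}{|y-\tilde y|^{d-2+2\mu}}$.

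The main obstacle I expect is controlling the integral near the two singularities $\hat y=y$ and $\hat y=\tilde y$ and near the boundary $\partial\Omega_n$ simultaneously, while keeping the $(\hat y\cdot n)$-weights under control: one must check that the $(\hat y\cdot n)$ factors produced by \eqref{estALinnablaGdqcq2} do not spoil integrability near the singular points (they help, being bounded there) and that the decay at $\hat y\to\infty$ is enough for the integral to converge (the two extra powers of $(\hat y\cdot n)$ plus the gain from \eqref{estALinnablaGdqcq2} suffice). A secondary technical point is justifying the integration by parts rigorously on the unbounded domain $\Omega_n$ — truncating to $D(0,R)$ and sending $R\to\infty$, using \eqref{estALinGdqcq} and \eqref{estALinnablaGdqcq} to kill the boundary contributions at $|\hat y|=R$, exactly as in the proof of Theorem~\ref{theov=w}. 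Finally, one records that the same argument applied to the transposed operator gives the identical bounds for $G^{*,a}-G^{*,a'}$, which is what will actually be used (together with the differentiated version, obtained by differentiating the representation formula once in $y$ and using \eqref{estALinnablaGdqcq2}) to prove the Lipschitz continuity of $a\mapsto v_{bl}^{a,\infty}$ in Proposition~\ref{propcontinuityava}.
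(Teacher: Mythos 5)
For \eqref{estG*aG*a'd-2} your argument is essentially the paper's: the same representation formula (which the paper quotes from Hofmann--Kim, corollary $3.5$, for $d\geq 3$ and extends to $\Omega_n$), the bound $\left|A(\cdot+a'n)-A(\cdot+an)\right|\leq C\nu$, the gradient bounds \eqref{estALinnablaGdqcq}, and the standard convolution estimate. That part is fine.

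For \eqref{estG*aG*a'd} there is a genuine gap. First, the key kernel bounds you invoke are misquoted: in \eqref{estALinnablaGdqcq2} the weight in the numerator is the distance to the boundary of the \emph{non-differentiated} variable, so in the representation formula $\nabla_{\hat y}G^{a}(y,\hat y)$ carries a factor $y\cdot n$ (via the transposed kernel $G^{*,a}(\hat y,y)$) and $\nabla_{\hat y}G^{a'}(\hat y,\tilde y)$ carries $\tilde y\cdot n$ — not "two extra factors of $\hat y\cdot n$" as you write. Consequently your trading mechanism collapses: the inequality $(\hat y\cdot n)\leq(\hat y\cdot n)^{\mu}|y-\hat y|^{1-\mu}$ is not even valid in general (it fails whenever $\hat y\cdot n>|y-\hat y|$, e.g.\ $\hat y$ close to $y$ with both far from $\partial\Omega_n$), and in any case no manipulation of $\hat y\cdot n$ factors can produce the required weights $(y\cdot n)^{\mu}(\tilde y\cdot n)^{\mu}$. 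A direct proof along your lines could probably be salvaged by interpolating between \eqref{estALinnablaGdqcq} and the correctly transposed \eqref{estALinnablaGdqcq2} and splitting according to whether $y\cdot n$, $\tilde y\cdot n$ are small compared with $|y-\tilde y|$, but that is precisely the case analysis you have not done, and it is where the work lies. Second, your route uses the representation formula and the convolution estimate for $d=2$ as well; without the weights the product of gradient bounds is not integrable at infinity in $\mathbb R^2$, and the formula itself is only invoked for $d\geq 3$ — the paper deliberately avoids this by deducing the two-dimensional bound from the three-dimensional one (as in \cite{dgvnm2}, \cite{alin}). For comparison, the paper proves \eqref{estG*aG*a'd} quite differently: it first shows the intermediate bound $|G^{a}(y,\tilde y)-G^{a'}(y,\tilde y)|\leq C\nu\,(y\cdot n)^{\mu}|y-\tilde y|^{-(d-2+\mu)}$ by distinguishing $y\cdot n\geq r/3$ (where \eqref{estG*aG*a'd-2} suffices) from $y\cdot n<r/3$, in which case the difference solves a homogeneous-boundary problem on $D(\bar y,r/3)$ with an $O(\nu)$ divergence-form source and the rescaled Avellaneda--Lin boundary H\"older estimate \eqref{boundaryest} yields the factor $(y\cdot n)^{\mu}/r^{\mu}$; the second weight $(\tilde y\cdot n)^{\mu}$ is then obtained by repeating the argument in the other variable through the transposed kernels.
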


\begin{proof}
The ideas for the proofs are in large part taken from the reference above. For \eqref{estG*aG*a'd-2}, we rely on a representation formula of $G^{a}(y,\tilde{y})-G^{a'}(y,\tilde{y})$: 
\begin{multline}\label{relationG*aG*a'}
G^{a}(y,\tilde{y})-G^{a'}(y,\tilde{y})\\
=\int_{\Omega_n}\partial_{2,\alpha}G^{a}(y,\hat{y})\left(A^{\alpha\beta}(\hat{y}+na')-A^{\alpha\beta}(\hat{y}+na)\right)\partial_{1,\beta}G^{a'}(\hat{y},\tilde{y})d\hat{y}.
\end{multline}
This formula, which is proven in \cite{HofKim07} (see corollary $3.5$) for the domain $\mathbb R^d$, $d\geq 3$, is a consequence of Green's representation formula. The proof of Hofmann and Kim extends to the domain $\Omega_n$. From \eqref{relationG*aG*a'} and \eqref{estALinnablaGdqcq}, one deduces that for all $d\geq 3$, $y,\ \tilde{y}\in\Omega_n$, $y\neq\tilde{y}$,
\begin{align*}
\left|G^{a}(y,\tilde{y})-G^{a'}(y,\tilde{y})\right|&\leq C\nu\int_{\Omega_n}\frac{1}{|y-\hat{y}|^{d-1}}\frac{1}{|\tilde{y}-\hat{y}|^{d-1}}d\hat{y}\\
&\leq C\nu\int_{\mathbb R^d}\frac{1}{|y-\tilde{y}-\hat{y}|^{d-1}}\frac{1}{|\hat{y}|^{d-1}}d\hat{y}\\
&\leq C\nu\frac{1}{|y-\tilde{y}|^{d-2}}\int_{\mathbb R^d}\frac{1}{\left|\frac{y-\tilde{y}}{|y-\tilde{y}|}-\hat{y}\right|^{d-1}}\frac{1}{|\hat{y}|^{d-1}}d\hat{y}\\
&\leq C\nu\frac{1}{|y-\tilde{y}|^{d-2}}.
\end{align*}

For \eqref{estG*aG*a'd} we have recourse to the local boundary estimate \eqref{boundaryest}. Assume that $d\geq 3$. For given $y,\ \tilde{y}\in\Omega_n$, $y\neq\tilde{y}$, we first establish the bound
\begin{equation}\label{estG*aG*a'dinter}
\left|G^{a}(y,\tilde{y})-G^{a'}(y,\tilde{y})\right|\leq C\nu\frac{\left(y\cdot n\right)^\mu}{|y-\tilde{y}|^{d-2+\mu}}.
\end{equation}
Let $r:=|y-\tilde{y}|$. We distinguish between two cases. If $y\cdot n\geq \frac{r}{3}$, then \eqref{estG*aG*a'dinter} follows directly from \eqref{estG*aG*a'd-2}. Assume that $y\cdot n<\frac{r}{3}$ and let $\bar{y}\in\partial\Omega_n$ such that $y\cdot n=|y-\bar{y}|$. Then, from \eqref{sysGreen} it comes that $G^a(\cdot,\tilde{y})-G^{a'}(\cdot,\tilde{y})$ satisfies
\begin{equation}\label{sysGa-Ga'}
\left\{
\begin{array}{rl}
-\nabla_y\cdot A(\cdot+na)\nabla_y\left(G^{a}\left(\cdot,\tilde{y}\right)-G^{a'}\left(\cdot,\tilde{y}\right)\right)&=\nabla\cdot\left[A(\cdot+na)-A(\cdot+na')\right]\nabla G^{a'}(\cdot,\tilde{y})\\
&\qquad\qquad\qquad\qquad\qquad\mbox{in}\quad D\left(\bar{y},\frac{r}{3}\right)\\
G^{a}\left(\cdot,\tilde{y}\right)-G^{a'}\left(\cdot,\tilde{y}\right)&=0\qquad \mbox{in}\quad \Gamma\left(\bar{y},\frac{r}{3}\right)
\end{array}
\right. .
\end{equation}
Applying a rescaled version of \eqref{boundaryest}, one gets using \eqref{estG*aG*a'd-2} and \eqref{estALinnablaGdqcq},
\begin{align*}
&\left|G^a(y,\tilde{y})-G^{a'}(y,\tilde{y})\right|=\left|G^a(y,\tilde{y})-G^{a'}(\bar{y},\tilde{y})-\left(G^{a'}(y,\tilde{y})-G^a(\bar{y},\tilde{y})\right)\right|\\
&\leq\left\|G^a(\cdot,\tilde{y})-G^{a'}(\cdot,\tilde{y})\right\|_{C^{0,\mu}\left(D\left(\bar{y},\frac{r}{6}\right)\right)}\frac{|y\cdot n|^\mu}{r^\mu}r^\mu\\
&\leq C\left[\frac{1}{r^\frac{d}{2}}\left\|G^a(\cdot,\tilde{y})-G^{a'}(\cdot,\tilde{y})\right\|_{L^2\left(D\left(\bar{y},\frac{r}{3}\right)\right)}\right.\\
&\left.\qquad\qquad+\frac{r}{r^{d+\delta}}\left\|\left[A(\cdot+na)-A(\cdot+na')\right]\nabla G^{a'}(\cdot,\tilde{y})\right\|_{L^\frac{d}{d+\delta}\left(D\left(\bar{y},\frac{r}{3}\right)\right)}\right]\frac{|y\cdot n|^\mu}{r^\mu}\\
&\leq C\left[\left\|G^a(\cdot,\tilde{y})-G^{a'}(\cdot,\tilde{y})\right\|_{L^\infty\left(D\left(\bar{y},\frac{r}{3}\right)\right)}+r\nu\left\|\nabla G^{a'}(\cdot,\tilde{y})\right\|_{L^\infty\left(D\left(\bar{y},\frac{r}{3}\right)\right)}\right]\frac{|y\cdot n|^\mu}{r^\mu}\\
&\leq C\nu\left[\sup_{\hat{y}\in D\left(\bar{y},\frac{r}{3}\right)}\frac{1}{|\hat{y}-\tilde{y}|^{d-2}}+r\sup_{\hat{y}\in D\left(\bar{y},\frac{r}{3}\right)}\frac{1}{|\hat{y}-\tilde{y}|^{d-1}}\right]\frac{|y\cdot n|^\mu}{r^\mu}\\
&\leq C\nu\frac{|y\cdot n|^\mu}{r^{d-2+\mu}},
\end{align*}
as for all $\hat{y}\in D\left(\bar{y},\frac{r}{3}\right)$, $|\hat{y}-\tilde{y}|>\frac{r}{6}$. This shows \eqref{estG*aG*a'dinter}. We now turn to the proof of \eqref{estG*aG*a'd} itself. If $\tilde{y}\cdot n\geq \frac{r}{3}$, then \eqref{estG*aG*a'd} follows directly from \eqref{estG*aG*a'dinter}. Assume that $\tilde{y}\cdot n<\frac{r}{3}$ and let $\bar{y}\in\partial\Omega_n$ such that $\tilde{y}\cdot n=|\tilde{y}-\bar{y}|$. Applying a rescaled version of \eqref{boundaryest} to $G^{*,a}\left(\cdot,y\right)-G^{*,a'}\left(\cdot,y\right)$ satisfying \eqref{sysGa-Ga'} with $A$ (resp. $G^{a'}$, $y$, $\tilde{y}$) replaced by $A^*$ (resp. $G^{*,a'}$, $\tilde{y}$, $y$), one gets using \eqref{estG*aG*a'dinter}, \eqref{estALinnablaGdqcq2}, and for all $\hat{y}\in D\left(\bar{y},\frac{r}{3}\right)$ $|\hat{y}-y|>\frac{r}{6}$,
\begin{align*}
&\left|G^a(y,\tilde{y})-G^{a'}(y,\tilde{y})\right|=\left|G^{*,a}(\tilde{y},y)-G^{*,a'}(\tilde{y},y)\right|\\
&\leq C\left[\left\|G^{*,a}(\cdot,y)-G^{*,a'}(\cdot,y)\right\|_{L^\infty\left(D\left(\bar{y},\frac{r}{3}\right)\right)}+r\nu\left\|\nabla G^{*,a'}(\cdot,y)\right\|_{L^\infty\left(D\left(\bar{y},\frac{r}{3}\right)\right)}\right]\frac{|\tilde{y}\cdot n|^\mu}{r^\mu}\\
&\leq C\nu\left[\sup_{\hat{y}\in D\left(\bar{y},\frac{r}{3}\right)}\left|G^{a}(y,\hat{y})-G^{a'}(y,\hat{y})\right|+r\sup_{\hat{y}\in D\left(\bar{y},\frac{r}{3}\right)}\frac{y\cdot n}{|\hat{y}-y|^{d}}\right]\frac{|\tilde{y}\cdot n|^\mu}{r^\mu}\\
&\leq C\nu\left[\sup_{\hat{y}\in D\left(\bar{y},\frac{r}{3}\right)}\frac{\left(y\cdot n\right)^\mu}{|\hat{y}-y|^{d-2+\mu}}+r\sup_{\hat{y}\in D\left(\bar{y},\frac{r}{3}\right)}\frac{\left(y\cdot n\right)^\mu}{|\hat{y}-y|^{d-1+\mu}}\frac{\left(y\cdot n\right)^{1-\mu}}{|\hat{y}-y|^{1-\mu}}\right]\frac{|\tilde{y}\cdot n|^\mu}{r^\mu}\\
&\leq C\nu\left[\sup_{\hat{y}\in D\left(\bar{y},\frac{r}{3}\right)}\frac{\left(y\cdot n\right)^\mu}{|\hat{y}-y|^{d-2+\mu}}+r\sup_{\hat{y}\in D\left(\bar{y},\frac{r}{3}\right)}\frac{\left(y\cdot n\right)^\mu}{|\hat{y}-y|^{d-1+\mu}}\right]\frac{|\tilde{y}\cdot n|^\mu}{r^\mu}\\
&\leq C\nu\frac{\left(y\cdot n\right)^\mu\left(\tilde{y}\cdot n\right)^\mu}{|y-\tilde{y}|^{d-2+2\mu}},
\end{align*}
on condition that $y\cdot n\leq |\hat{y}-y|$ for all $\hat{y}\in D\left(\bar{y},\frac{r}{3}\right)$. If the latter is not true, i.e. if there is $\hat{y}\in D\left(\bar{y},\frac{r}{3}\right)$ such that $y\cdot n>|\hat{y}-y|>\frac{r}{6}$, then we apply the same reasoning as above with $A$ (resp. $G^{a'}$, $y$, $\tilde{y}$) replaced by $A^*$ (resp. $G^{*,a'}$, $\tilde{y}$, $y$) to get
\begin{equation}\label{estG*aG*a'dinter2}
\left|G^a(y,\tilde{y})-G^{a'}(y,\tilde{y})\right|=\left|G^{*,a}(\tilde{y},y)-G^{*,a'}(\tilde{y},y)\right|\leq C\nu\frac{(\tilde{y}\cdot n)^\mu}{|y-\tilde{y}|^{d-2+\mu}},
\end{equation}
and deduce \eqref{estG*aG*a'd} from \eqref{estG*aG*a'dinter2} and $y\cdot n>\frac{r}{6}$. The two-dimensional bound follows from the three-dimensional one as explained in \cite{dgvnm2} and \cite{alin}.
\end{proof}

Let us notice that proposition \ref{propcontinuityava} implies that for all $a,\ a'\in\mathbb R$, $v^{a,\infty}_{bl}=v^{a',\infty}_{bl}$. Indeed, for $\xi\in\mathbb Z^d$, $v^\xi_{bl}$ solving
\begin{equation*}
\left\{\begin{array}{rll}
-\nabla\cdot A(y)\nabla v^\xi_{bl}&=0,&y\cdot n-a+\xi\cdot n>0\\
v^\xi_{bl}&=v_0(y),& y\cdot n-a+\xi\cdot n=0
\end{array}\right. 
\end{equation*}
satisfies, by periodicity of the coefficients and of the Dirichlet data, $v^\xi_{bl}(\cdot):=v_{bl}(\cdot+\xi)$. Hence, $v^{a-\xi\cdot n,\infty}_{bl}=v^{a,\infty}_{bl}$. As $n\notin\mathbb R\mathbb Q^d$, the set $\{\xi\cdot n,\ \xi\in\mathbb Z^d\}$ is dense in $\mathbb R$. The independence of $v^{a,\infty}_{bl}$ from $a$ follows now from the continuity of $a\mapsto v^{a,\infty}_{bl}$. 

The rest of this section is devoted to the proof of proposition \ref{propcontinuityava}. Let as before $a,\ a'\in\mathbb R$ and $\nu:=|a'-a|$. Let $v_{bl}$ be the solution of \eqref{sysbl} and $v_{bl}'$ the solution of \eqref{sysbl} in the domain $\Omega_{n,a'}$ instead of $\Omega_{n,a}$. Then $v_{bl}=v^a_{bl}(\cdot-an)$ (resp. $v_{bl}=v^{a'}_{bl}(\cdot-a'n)$) where $v^a_{bl}$ (resp. $v^{a'}_{bl}$) solves
\begin{equation}\label{sysv^abl}
\left\{\begin{array}{rll}
-\nabla\cdot A(y+an)\nabla v^a_{bl}&=0,&y\cdot n>0\\
v^a_{bl}&=v_0(y+an),& y\cdot n=0
\end{array}\right. 
\end{equation}
(resp. \eqref{sysv^abl} with $a$ replaced by $a'$). Take $\varphi\in C^\infty_c\left(\mathbb R\right)$, compactly supported in $[-1,1]$ such that $0\leq\varphi\leq 1$ and $\varphi\equiv 1$ on $\left[-\frac{1}{2},\frac{1}{2}\right]$. Then $\tilde{v}^a_{bl}:=v^a_{bl}-\varphi(y\cdot n)v_0(y+na)$ (resp. $\tilde{v}^{a'}_{bl}:=v^{a'}_{bl}-\varphi(y\cdot n)v_0(y+na')$) solves
\begin{equation}\label{sysvtilde^abl}
\left\{\begin{array}{rll}
-\nabla\cdot A(y+an)\nabla \tilde{v}^a_{bl}&=\nabla\cdot A(y+na)\nabla\left(\varphi(y\cdot n)v_0(y+na)\right),&y\cdot n>0\\
\tilde{v}^a_{bl}&=0,& y\cdot n=0
\end{array}\right. 
\end{equation}
(resp. \eqref{sysvtilde^abl} with $a$ replaced by $a'$). One important point is that the source term in \eqref{sysvtilde^abl} is compactly supported in the direction normal to the boundary. We now estimate
\begin{equation*}
v^{a}_{bl}(y)-v^{a'}_{bl}(y)=\tilde{v}^a_{bl}(y)-\tilde{v}^{a'}_{bl}(y)+\varphi(y\cdot n)\left[v_0(y+na)-v_0(y+na')\right]
\end{equation*} 
for all $y\in\Omega_n$. We have, 
\begin{subequations}\label{splitingv^a-v^a'}
\begin{align}
&\tilde{v}^a_{bl}(y)-\tilde{v}^{a'}_{bl}(y)\nonumber\\
&\qquad=\int_{\Omega_n}G^a(y,\tilde{y})\nabla\cdot A(\tilde{y}+na)\nabla\left(\varphi(\tilde{y}\cdot n)v_0(\tilde{y}+na)\right)d\tilde{y}\nonumber\\
&\qquad\qquad-\int_{\Omega_n}G^{a'}(y,\tilde{y})\nabla\cdot A(\tilde{y}+na')\nabla\left(\varphi(\tilde{y}\cdot n)v_0(\tilde{y}+na')\right)d\tilde{y}\nonumber\\
&\qquad=\int_{\Omega_n}\left[G^a(y,\tilde{y})-G^{a'}(y,\tilde{y})\right]\nabla\cdot A(\tilde{y}+na)\nabla\left(\varphi(\tilde{y}\cdot n)v_0(\tilde{y}+na)\right)d\tilde{y}\label{splitingv^a-v^a'1}\\
&\qquad\quad+\int_{\Omega_n}G^{a'}(y,\tilde{y})\nabla\cdot\left[A(\tilde{y}+na)-A(\tilde{y}+na')\right]\nabla\left(\varphi(\tilde{y}\cdot n)v_0(\tilde{y}+na)\right)d\tilde{y}\label{splitingv^a-v^a'2}\\
&\qquad\quad\quad+\int_{\Omega_n}G^{a'}(y,\tilde{y})\nabla\cdot A(\tilde{y}+na')\nabla\left(\varphi(\tilde{y}\cdot n)\left[v_0(\tilde{y}+na)-v_0(\tilde{y}+na')\right]\right)d\tilde{y}\label{splitingv^a-v^a'3}.
\end{align}
\end{subequations}
We analyse the terms in r.h.s. separately. The first, \eqref{splitingv^a-v^a'1} deserves more attention. By estimate \eqref{estG*aG*a'd} and the usual change of variables $\tilde{z}=\mathrm{M}^T\tilde{y}$, for $y\cdot n\geq 1$,
\begin{align*}
&\left|\int_{\Omega_n}\left[G^a(y,\tilde{y})-G^{a'}(y,\tilde{y})\right]\nabla\cdot A(\tilde{y}+na)\nabla\left(\varphi(\tilde{y}\cdot n)v_0(\tilde{y}+na)\right)d\tilde{y}\right|\\
&\qquad\leq C\nu\int_{\Omega_n}\frac{\left(y\cdot n\right)^\mu\left(\tilde{y}\cdot n\right)^\mu}{|y-\tilde{y}|^{d-2+2\mu}}\left|\nabla\cdot A(\tilde{y}+na)\nabla\left(\varphi(\tilde{y}\cdot n)v_0(\tilde{y}+na)\right)\right|d\tilde{y}\\
&\qquad\leq C\nu\left(y\cdot n\right)^\mu\int_{\mathbb R^{d-1}}\frac{1}{\left[\left(y\cdot n-1\right)^2+|\tilde{z}'|^2\right]^\frac{d-2+2\mu}{2}}d\tilde{z}'\\
&\qquad\leq C\nu\frac{1}{\left(y\cdot n\right)^{-1+\mu}}\int_{\mathbb R^{d-1}}\frac{1}{\left[1+|u'|^2\right]^\frac{d-2+2\mu}{2}}du'.
\end{align*}
We need, $2\mu>1$ for the integral to be convergent, and $\mu<1$ for the r.h.s. to be bounded when $y\cdot n\rightarrow\infty$. We now work with such a $\mu$. For \eqref{splitingv^a-v^a'2}, we rely on \eqref{estALinGdqcq}: for $y\cdot n\geq 1$,
\begin{multline*}
\left|\int_{\Omega_n}G^{a'}(y,\tilde{y})\nabla\cdot\left[A(\tilde{y}+na)-A(\tilde{y}+na')\right]\nabla\left(\varphi(\tilde{y}\cdot n)v_0(\tilde{y}+na)\right)d\tilde{y}\right|\\
\leq C\nu\int_{\mathbb R^{d-1}}\frac{1}{\left[1+|u'|^2\right]^\frac{d}{2}}du'
\end{multline*}
which is a convergent integral. For \eqref{splitingv^a-v^a'3}, we argue analogously. We end with
\begin{equation*}
\left|v^{a}_{bl}(y)-v^{a'}_{bl}(y)\right|\leq\left|\tilde{v}^a_{bl}(y)-\tilde{v}^{a'}_{bl}(y)\right|+\varphi(y\cdot n)\left|v_0(y+na)-v_0(y+na')\right|\leq C\nu,
\end{equation*}
for $y\cdot n\geq 1$, which proves proposition \ref{propcontinuityava} letting $y\cdot n\rightarrow\infty$. This concludes the proof of theorem \ref{theoCVBLtail}.

\selectlanguage{english}

\pagestyle{plain}

\section{Almost arbitrarily slow convergence}
\label{secslow}

We exhibit examples in dimension $d=2$ showing that in general the convergence of $v_{bl}$ towards its boundary layer tail $v_{bl}^\infty$ can be nearly arbitrarily slow. Let us, for the rest of this section, focus on the case when $n\notin\mathbb R\mathbb Q^2$. We take $d=2$, $N=1$, $A=I_2$ and study the unique variational solution $v$ of
\begin{equation*}
\left\{
\begin{array}{rll}
-\Delta_z v=&0,& z_2>0\\
v(z)=&v_0(\mathrm{N}z_1),& z_2=0
\end{array}
\right. ,
\end{equation*}
where as usual $\mathrm{N}\in \mathbb R^2$ is the first column vector of an orthogonal matrix $\mathrm{M}$ sending $e_2$ on $n$. From theorem \ref{theodecaydiv} we know that $v=v(z_1,z_2)=V(\mathrm{N}z_1,z_2)$, with $V=V(\theta,t)\in\mathbb R$, $(\theta,t)\in\mathbb T^2\times\mathbb R_+$, solving
\begin{equation}\label{sysVd=2}
\left\{
\begin{array}{rll}
-\begin{pmatrix}
\mathrm{N}\cdot\nabla_\theta\\
\partial_t
\end{pmatrix}^2V=&0,& t>0\\
V(\theta,t)=&v_0(\theta),& t=0
\end{array}
\right. .
\end{equation}
Expanding $v_0$ in Fourier series yields for all $\theta\in\mathbb T^2$,
\begin{equation*}
v_0(\theta)=\sum_{\xi\in\mathbb Z^2}\widehat{v_0}(\xi)e^{2i\pi \xi\cdot\theta},
\end{equation*}
where $\bigl(\widehat{v_0}(\xi)\bigr)_\xi\in l^2(\mathbb Z;\mathbb R)$. From \eqref{aprioriboundV}, in particular $\partial_tV\in L^2(\mathbb T^2\times\mathbb R_+)$, it comes for all $(\theta,t)\in\mathbb T^2\times\mathbb R_+$,
\begin{equation*}
V(\theta,t)=\sum_{\xi\in\mathbb Z^2}\widehat{v_0}(\xi) e^{-2\pi|\mathrm{N}\cdot \xi|t}e^{2i\pi \xi\cdot\theta}.
\end{equation*}
Parceval's equality 
\begin{equation}\label{parceval}
\begin{Vmatrix}
V(\theta,t)-\widehat{v_0}(0)
\end{Vmatrix}_{L^2(\mathbb T^2)}^2=\sum_{\xi\in\mathbb Z^2\setminus\{0\}}\bigl|\widehat{v_0}(\xi)\bigr|^2e^{-4\pi|\mathrm{N}\cdot \xi|t}
\end{equation}
together with Lebesgue's dominated convergence theorem prove that
\begin{equation*}
\begin{Vmatrix}
V(\theta,t)-\widehat{v_0}(0)
\end{Vmatrix}_{L^2(\mathbb T^2)}^2\stackrel{t\rightarrow\infty}{\longrightarrow} 0.
\end{equation*}
As $v_0$ is a $C^\infty$ function, its Fourier coefficients $\bigl(\widehat{v_0}(\xi)\bigr)_\xi$ go to zero when $|\xi|\rightarrow\infty$, faster than any negative power of $|\xi|$. It follows from this, for all $\alpha\in\mathbb N^2$,
\begin{equation}\label{CVforallbeta}
\begin{Vmatrix}
\partial^\alpha_\theta \left(V(\theta,t)-\widehat{v_0}(0)\right)
\end{Vmatrix}_{L^2(\mathbb T^2)}^2\stackrel{t\rightarrow\infty}{\longrightarrow} 0.
\end{equation}
Using Sobolev's injections, one notices that \eqref{CVforallbeta} proves again the convergence of $v$ towards the boundary layer tail $v^\infty_{bl}:=\widehat{v_0}(0)$.

Assume for a moment that $n$ satisfies the small divisors condition \eqref{smalldivisors1}. Let us come back to \eqref{parceval}. For all $m\in\mathbb N$,
\begin{align}\label{majVc^0_0div}
\begin{Vmatrix}
V(\theta,t)-\widehat{v_0}(0)
\end{Vmatrix}_{L^2(\mathbb T^2)}^2&=t^{-m}\sum_{\xi\in\mathbb Z^2\setminus\{0\}}\bigl|\widehat{v_0}(\xi)\bigr|^2t^me^{-4\pi|\mathrm{N}\cdot\xi|t}\nonumber\\
&=t^{-m}\sum_{\xi\in\mathbb Z^2\setminus\{0\}}\frac{\bigl|\widehat{v_0}(\xi)\bigr|^2}{|\mathrm{N}\cdot \xi|^m}\left(|\mathrm{N}\cdot \xi|t\right)^me^{-4\pi|\mathrm{N}\cdot \xi|t}\nonumber\\
&\leq Ct^{-m}\sum_{\xi\in\mathbb Z^2\setminus\{0\}}\bigl|\widehat{v_0}(\xi)\bigr|^2|\xi|^{(2+\tau)m}\left(|\mathrm{N}\cdot \xi|t\right)^me^{-4\pi|\mathrm{N}\cdot \xi|t}\nonumber\\
&\leq Ct^{-m}\sum_{\xi\in\mathbb Z^2\setminus\{0\}}\bigl|\widehat{v_0}(\xi)\bigr|^2|\xi|^{(2+\tau)m}\leq C_mt^{-m},
\end{align}
the function $t\mapsto\left(|\mathrm{N}\cdot \xi|t\right)^me^{-4\pi|\mathrm{N}\cdot \xi|t}$ being bounded on $\mathbb R_+$. We have shown that $V(\cdot,t)$ converges to $\widehat{v_0}(0)$ in $L^2\left(\mathbb T^2\right)$, faster than every negative power of $t$.

Assume now that $n\notin\mathbb R\mathbb Q^2$ does not verify \eqref{smalldivisors1} and let $l>0$. Hence there are points of the lattice  $\mathbb Z^2$, except $0$, which are as close to the line $\mathrm{N}\cdot y=0$ as we wish. The sum in the r.h.s. of \eqref{parceval} does not necessarily keep the trace of the exponential behaviour of its terms. We show that the convergence is at least as slow as the convergence of $t\mapsto t^{-l}$ towards $0$ at $\infty$. In fact we aim at proving:
\begin{theo}\label{theoslowcvL2}
Assume that $n\notin\mathbb R\mathbb Q^2$ does not satisfy \eqref{smalldivisors1}.\\
Then, there exists a smooth $v_0$ and a strictly increasing sequence $(t_M)_{M\geq 1}$ of positive real numbers, tending to $\infty$, such that for all $\alpha\in\mathbb N^2$, for all $M\in\mathbb N\setminus\{0\}$,
\begin{equation*}
\begin{Vmatrix}
\partial_\theta^\alpha\left(V(\theta,t_M)-\widehat{v_0}(0)\right)
\end{Vmatrix}_{L^2(\mathbb T^2)}\geq t_M^{-l},
\end{equation*}
where $V$ is the solution of \eqref{sysVd=2} associated to $v_0$.
\end{theo}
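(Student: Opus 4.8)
The plan is to reduce everything to the exact identity \eqref{parceval} together with its $\partial_\theta^\alpha$-analogue (obtained by the same termwise computation), and then to build $v_0$ as a lacunary Fourier series concentrated on frequencies $\xi$ for which $|\mathrm{N}\cdot\xi|$ is super-polynomially small in $|\xi|$; the failure of \eqref{smalldivisors1} is precisely what supplies such frequencies.

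\textbf{Extraction of resonant frequencies.} Set $\varepsilon_0:=\min(|\mathrm{N}_1|,|\mathrm{N}_2|)$, which is positive because $n\notin\mathbb R\mathbb Q^2$ forces $\mathrm{N}$ (a unit vector orthogonal to $n$) to have no vanishing coordinate. Negating \eqref{smalldivisors2} with $C=\varepsilon_0/2$ and $\tau=k$ yields, for each $k\geq1$, a vector $\xi^{(k)}\in\mathbb Z^2\setminus\{0\}$ with $|\mathrm{N}\cdot\xi^{(k)}|<\tfrac{\varepsilon_0}{2}|\xi^{(k)}|^{-2-k}$. Since then $|\mathrm{N}\cdot\xi^{(k)}|<\varepsilon_0$, both coordinates of $\xi^{(k)}$ are nonzero (if $\xi_1=0\neq\xi_2$ then $|\mathrm{N}\cdot\xi|=|\mathrm{N}_2||\xi_2|\geq\varepsilon_0$), so $|\xi^{(k)}|\geq\sqrt2$ and $(\xi^{(k)})^\alpha$ is a nonzero integer, hence $|(\xi^{(k)})^\alpha|\geq1$, for every $\alpha\in\mathbb N^2$. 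Because $\xi\mapsto|\mathrm{N}\cdot\xi|$ is bounded below on each finite set $\{0<|\xi|\leq R\}$ (on which $\mathrm{N}\cdot\xi\neq0$, as $\mathbb R n\cap\mathbb Z^2=\{0\}$), the numbers $|\xi^{(k)}|$ cannot stay bounded; passing to a subsequence I may take $k\mapsto|\xi^{(k)}|$ strictly increasing, so $|\xi^{(k)}|\to\infty$ and $|\mathrm{N}\cdot\xi^{(k)}|<|\xi^{(k)}|^{-k}$.

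\textbf{Choice of data and heights, and the lower bound.} Put $t_M:=\tfrac{\log2}{4\pi}|\xi^{(M)}|^{M}$, a strictly increasing sequence of positive reals tending to $\infty$, and define $v_0$ by $\widehat{v_0}(\pm\xi^{(M)}):=\sqrt2\,(4\pi/\log2)^{l}\,|\xi^{(M)}|^{-Ml}$ for $M\geq1$ and $\widehat{v_0}(\xi):=0$ otherwise. This $v_0$ is real, and since the exponent $Ml\to\infty$ while $|\xi^{(M)}|\geq\sqrt2$, its Fourier coefficients decay faster than any power of $|\xi|$, so $v_0\in C^\infty(\mathbb T^2)$ and $\widehat{v_0}(0)=0$. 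By the Fourier representation of the variational solution $V$ recorded just before \eqref{parceval}, $V(\theta,t)=\sum_\xi\widehat{v_0}(\xi)e^{-2\pi|\mathrm{N}\cdot\xi|t}e^{2i\pi\xi\cdot\theta}$, whence
\[
\bigl\|\partial_\theta^\alpha\bigl(V(\cdot,t)-\widehat{v_0}(0)\bigr)\bigr\|_{L^2(\mathbb T^2)}^2=\sum_{\xi\neq0}(2\pi)^{2|\alpha|}|\xi^\alpha|^2|\widehat{v_0}(\xi)|^2e^{-4\pi|\mathrm{N}\cdot\xi|t}\ \geq\ |\widehat{v_0}(\xi^{(M)})|^2e^{-4\pi|\mathrm{N}\cdot\xi^{(M)}|t},
\]
the last step keeping only the term $\xi=\xi^{(M)}$ and using $(2\pi)^{2|\alpha|}|(\xi^{(M)})^\alpha|^2\geq1$. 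At $t=t_M$ one has $4\pi|\mathrm{N}\cdot\xi^{(M)}|t_M<4\pi|\xi^{(M)}|^{-M}\cdot\tfrac{\log2}{4\pi}|\xi^{(M)}|^{M}=\log2$, so $e^{-4\pi|\mathrm{N}\cdot\xi^{(M)}|t_M}>\tfrac12$; therefore the right-hand side is $\geq\tfrac12|\widehat{v_0}(\xi^{(M)})|^2=(4\pi/\log2)^{2l}|\xi^{(M)}|^{-2Ml}=t_M^{-2l}$, and taking square roots gives the asserted inequality for every $\alpha\in\mathbb N^2$ and every $M\geq1$.

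\textbf{Main difficulty.} There is no serious analytic obstacle, since the excerpt has already diagonalized the problem by Fourier series, so this is purely a lacunary-series construction. The two points requiring care are: the extraction step, where choosing the sharpened constant $C=\varepsilon_0/2$ (rather than an arbitrary one) is what prevents the witnesses from collapsing onto $\pm e_j$ and genuinely forces $|\xi^{(k)}|\to\infty$ with both coordinates nonzero; and the bookkeeping of multiplicative constants so that the final estimate emerges with the clean exponent $t_M^{-l}$, uniformly in $\alpha$ and valid for all $M\geq1$.
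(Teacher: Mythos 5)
Your proof is correct and follows essentially the same route as the paper: negating \eqref{smalldivisors1} produces frequencies $\xi^{(M)}$ with $|\mathrm{N}\cdot\xi^{(M)}|<|\xi^{(M)}|^{-M}$, a lacunary Fourier series supported on $\pm\xi^{(M)}$ gives a smooth $v_0$, and evaluating the Parseval identity at times $t_M\sim|\xi^{(M)}|^{M}$ (so that the exponential factor is still of order one) yields the lower bound. Your variant is even slightly tidier on two points the paper glosses over: taking $C=\varepsilon_0/2$ forces both coordinates of every $\xi^{(M)}$ to be nonzero, so $|(\xi^{(M)})^\alpha|\geq 1$ and the estimate holds for all $M\geq 1$ rather than only $M\geq M^{(0)}$, and your normalization of $\widehat{v_0}$ and $t_M$ produces the constant $1$ in the bound $\geq t_M^{-l}$ exactly, whereas the paper's final constant $\sqrt{2}\bigl(e^{-1}l/(2\pi)\bigr)^{l}$ implicitly requires a rescaling of $v_0$.
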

Let us insist on the fact that theorem \ref{theoslowcvL2} holds for any $n\notin\mathbb R\mathbb Q^2$, which does not satisfy the small divisors assumption. The idea of the proof is to choose a family $\bigl(\widehat{v_0}(\xi)\bigr)_\xi$, whose support, that is the set of subscripts $\xi\in\mathbb Z^2$ such that $\widehat{v_0}(\xi)\neq 0$, is sufficiently close to the line $\mathrm{N}\cdot y=0$. We now construct a suitable sequence $(\xi_M)_{M\geq 1}$ of $\mathbb Z^2\setminus\{0\}$. The small divisors assumptions being not verified, for all $M\in\mathbb N\setminus\{0\}$, there exists $\xi\in\mathbb Z^2\setminus\{0\}$ such that 
\begin{equation}\label{nondivM}
|\mathrm{N}\cdot\xi|<\frac{1}{M}|\xi|^{-M}.
\end{equation}
One can construct $(\xi_M)$ recursively:
\begin{equation*}
\xi_1:=\argmin_{
\begin{subarray}{c}
\xi\in\mathbb Z^2\setminus\{0\}\\
|\mathrm{N}\cdot \xi|<|\xi|^{-1}
\end{subarray}}|\xi|,\quad 
\xi_2:=\argmin_{
\begin{subarray}{c}
\xi\in\mathbb Z^2\setminus\{0\}\\
|\xi_2|>|\xi_1|+1\\
|\mathrm{N}\cdot\xi|<\frac{1}{2}|\xi|^{-2}
\end{subarray}}|\xi|,\quad \ldots,\quad \xi_M:=\argmin_{
\begin{subarray}{c}
\xi\in\mathbb Z^2\setminus\{0\}\\
|\xi_M|>|\xi_{M-1}|+1\\
|\mathrm{N}\cdot\xi|<\frac{1}{M}|\xi|^{-M}
\end{subarray}}|\xi|,
\end{equation*} 
where $\argmin$ stands for a minimizor. The existence of a minimizor is ensured by \eqref{nondivM}. For $\xi_1$ the reasoning is straightforward. Let us sketch the proof of the existence of $\xi_2$, which immediately applies, with minor modifications, for all $\xi_M$. We assume that for all $|\xi|>|\xi_1|+1$, $|\mathrm{N}\cdot\xi|\geq\frac{1}{2}|\xi|^{-2}$. Then, for all $M\geq 2$, $|\mathrm{N}\cdot \xi|\geq\frac{1}{2}|\xi|^{-2}\geq\frac{1}{M}|\xi|^{-M}$. According to \eqref{nondivM}, this shows that there exists $\xi\in\mathbb Z^2\setminus\{0\}$, $|\xi|\leq|\xi_1|+1$, such that, for all $M\geq 1$, $|\mathrm{N}\cdot\xi|<\frac{1}{M}|\xi|^{-M}$. For this $\xi\neq 0$, $\mathrm{N}\cdot\xi=0$, which is incompatible with $n\notin\mathbb R\mathbb Q^2$. We have thus built a sequence $(\xi_M)_{M\geq 1}$ of vectors of $\mathbb Z^2\setminus\{0\}$ satisfying:
\begin{enumerate}
\item $\left(|\xi_M|\right)_M$ is strictly increasing;
\item for all $M\geq 1$, $|\xi_M|\geq M$;
\item for all $M\geq 1$, $|\mathrm{N}\cdot \xi_M|<\frac{1}{M}|\xi_M|^{-M}$.
\end{enumerate}

We now come to the construction of $v_0$ keeping in mind that $v_0$ has to be smooth. For all $\xi\in\mathbb Z^2$, we define 
\begin{equation*}
\widehat{v_0}(\xi):=\left\{
\begin{array}{cl}
0, & \mbox{if}\; \xi\neq \xi_M,\: -\xi_M \, \mbox{for all}\; M\geq 1\\
M^{-l}|\xi_M|^{-Ml}, & \mbox{if}\; \xi=\xi_M\: \mbox{or}\: -\xi_M
\end{array}
\right. .
\end{equation*}
Thanks to the construction of the sequence $(\xi_M)_M$, for all $m\in\mathbb N$, $\widehat{v_0}(\xi)=O\left(|\xi|^{-m}\right)$. Thus $v_0$ defined like this is a $C^\infty\left(\mathbb T^2\right)$ function and $V=V(\theta,t)$ defined by for all $\theta\in\mathbb T^2$, for all $t\geq 0$,
\begin{equation*}
V(\theta,t):=2\sum_{M\geq 1}M^{-l}|\xi_M|^{-Ml}e^{-2\pi |\mathrm{N}\cdot\xi_M|t}\cos(2\pi \xi_M\cdot\theta)
\end{equation*}
is a smooth solution to \eqref{sysVd=2}. 

For all $M\geq 1$, let $t_M:=l\frac{M|\xi_M|^M}{2\pi}$. The final step in the proof of theorem \ref{theoslowcvL2} is to estimate $\begin{Vmatrix}
\partial_\theta^\alpha\left(V(\theta,t_M)-\widehat{v_0}(0)\right)
\end{Vmatrix}_{L^2(\mathbb T^2)}$ for $\alpha\in\mathbb N^2$ and $M\geq 1$. Of course, in our example $\widehat{v_0}(0)=0$. Yet one is free to modify this coefficient without changing anything to the nature of the problem. Let $\alpha\in\mathbb N^2$. By Lebegue's dominated convergence theorem, for all $(\theta,t)\in\mathbb T^2\times\mathbb R_+$,
\begin{equation*}
\partial_\theta^\alpha V(\theta,t)=2\sum_{M\geq 1}M^{-l}|\xi_M|^{-Ml}(2\pi)^{|\alpha|}\xi_M^\alpha e^{-2\pi|\mathrm{N}\cdot\xi_M|t}\cos^{(|\alpha|)}(2\pi\xi_M\cdot\theta),
\end{equation*}
and Parceval's inequality yields
\begin{equation}\label{slowParceval}
\begin{Vmatrix}
\partial_\theta^\alpha V(\theta,t)
\end{Vmatrix}^2_{L^2(\mathbb T^2)}=2\sum_{M\geq 1}M^{-2l}|\xi_M|^{-2Ml}(2\pi)^{2|\alpha|}|\xi_M^\alpha|^2e^{-4\pi|\mathrm{N}\cdot\xi_M|t}.
\end{equation}
Due to our choice of sequence $(\xi_M)_M$, in particular because of the second property of $(\xi_M)$ listed above, there exists $M^{(0)}\geq 1$, such that for all $M\geq M^{(0)}$, $\xi_{M,1}\neq 0$ and $\xi_{M,2}\neq 0$, where $\xi_M=(\xi_{M,1},\xi_{M,2})$; for all $M\geq M^{(0)}$, $|\xi_M^\alpha|=|\xi_{M,1}^{\alpha_1}||\xi_{M,2}^{\alpha_2}|\geq 1$. Therefore, \eqref{slowParceval} yields for all $t\in\mathbb R_+$,
\begin{align*}
\begin{Vmatrix}
\partial_\theta^\alpha V(\theta,t)
\end{Vmatrix}^2_{L^2(\mathbb T^2)}&\geq 2\sum_{M\geq M^{(0)}}M^{-2l}|\xi_M|^{-2Ml}(2\pi)^{2|\alpha|}|\xi_M^\alpha|^2e^{-4\pi|\mathrm{N}\cdot\xi_M|t}\\
&\geq 2\sum_{M\geq M^{(0)}}M^{-2l}|\xi_M|^{-2Ml}e^{-4\pi|\mathrm{N}\cdot\xi_M|t}\\
&\geq \frac{2}{(4\pi)^{2l}}t^{-2l}\sum_{M\geq M^{(0)}}\left(\frac{4\pi t}{M|\xi_M|^M}\right)^{2l}e^{-\frac{4\pi}{M|\xi_M|^M}t},
\end{align*}
and for all $M\geq M^{(0)}$, $\begin{Vmatrix}
\partial_\theta^\alpha V(\theta,t_M)
\end{Vmatrix}_{L^2(\mathbb T^2)}\geq \sqrt{2}\bigl(\frac{e^{-1}l}{2\pi}\bigr)^lt^{-l}_M$, which proves theorem \ref{theoslowcvL2}.

Theorem \ref{theoslowcvL2} prevents $V(\theta,t)$ from decaying fast towards $v_{bl}^\infty$ when $t\rightarrow\infty$: in particular, \eqref{majVc^0_0div} is impossible, in general, when $n$ does not meet the small divisors assumption. However, theorem \ref{theoslow} cannot be deduced from theorem \ref{theoslowcvL2}. Let us turn to estimates in $L^\infty$ norm for $v$. Uniformity in the tangential variable $\theta$ is replaced by local uniformity in $z_1$. Let $R>0$. We slightly modify $\bigl(\widehat{v_0}(\xi)\bigr)_\xi$. As for all $|z_1|\leq R$,
\begin{equation*}
2\pi|\xi_M\cdot \mathrm{N}||z_1|<\frac{2\pi R}{M}|\xi_M|^{-M}\leq\frac{2\pi R}{M}<\frac{\pi}{4},
\end{equation*} 
for all $M\geq M^{(1)}$ sufficiently large, depending on $R$, we consider $\tilde{v}_0$ defined by 
\begin{equation*}
\widehat{\tilde{v}_0}(\xi):=\left\{
\begin{array}{cl}
0, & \mbox{if}\; \xi\neq \xi_M,\: -\xi_M \, \mbox{for all}\; M\geq 1\\
0, & \mbox{if}\; \xi=\xi_M\: \mbox{or}\: -\xi_M\; \mbox{for}\; 1\leq M< M^{(1)}\\
M^{-l}|\xi_M|^{-Ml}, & \mbox{if}\; \xi=\xi_M\: \mbox{or}\: -\xi_M\; \mbox{for}\; M\geq M^{(1)}
\end{array}
\right. .
\end{equation*}
Then, for all $|z_1|\leq R$, for all $t\in\mathbb R_+$,
\begin{align*}
v(z_1,t)&=2\sum_{M\geq M^{(1)}}M^{-l}|\xi_M|^{-Ml}e^{-2\pi|\mathrm{N}\cdot\xi_M|t}\cos(2\pi\xi_M\cdot \mathrm{N}z_1)\\
&\geq \frac{2}{(2\pi)^l}t^{-l}\sum_{M\geq M^{(1)}}\left(\frac{2\pi t}{M|\xi_M|^M}\right)^{l}e^{-\frac{2\pi}{M|\xi_M|^M}t}
\end{align*}
which demonstrates theorem \ref{theoslow} when evaluated in $t=t_M$ for $M\geq M^{(1)}$.

\section*{Acknowledgement}
The author would like to thank his PHD advisor David G\'erard-Varet for bringing this stimulating subject to him and for his useful remarks as well as advice. The research of this paper was supported by the Agence Nationale de la Recherche under the grant ANR-$08$-JCC-$0104$ coordinated by David G\'erard-Varet.


\bibliographystyle{alpha}

\bibliography{BLtail_bib.bib}


\end{document}